\documentclass[10pt]{amsart}
      \usepackage[mathscr]{eucal}
      \usepackage{amsmath,amsfonts}
      %

             \hoffset -1.3cm
      \voffset -1cm
\textwidth 16truecm
      \textheight 22.5truecm

      \parskip\smallskipamount
          \newtheorem{theorem}{Theorem}[section]
      \newtheorem{definition}[theorem]{Definition}
      \newtheorem{proposition}[theorem]{Proposition}
      \newtheorem{corollary}[theorem]{Corollary}
      \newtheorem{lemma}[theorem]{Lemma}

      \makeatletter
      \@addtoreset{equation}{section}
      \makeatother

      \newcommand{\BB}{{\mathbb B}}
      \newcommand{\CC}{{\mathbb C}}
      \newcommand{\NN}{{\mathbb N}}
      
      \newcommand{\ZZ}{{\mathbb Z}}
      \newcommand{\DD}{{\mathbb D}}
      
      \newcommand{\FF}{{\mathbb F}}
      \newcommand{\TT}{{\mathbb T}}

      \newcommand{\cA}{{\mathcal A}}
      
      \newcommand{\cC}{{\mathcal C}}
      \newcommand{\cD}{{\mathcal D}}
      \newcommand{\cE}{{\mathcal E}}
      \newcommand{\cF}{{\mathcal F}}
      \newcommand{\cG}{{\mathcal G}}
      \newcommand{\cH}{{\mathcal H}}
      
      \newcommand{\cK}{{\mathcal K}}
       \newcommand{\cJ}{{\mathcal J}}
      
      \newcommand{\cM}{{\mathcal M}}

      \newcommand{\cP}{{\mathcal P}}
      
      \newcommand{\cS}{{\mathcal S}}
      \newcommand{\cT}{{\mathcal T}}

      \newdimen\expt
      \expt=.1ex
      \def\boxit#1{\setbox0\hbox{$\displaystyle{#1}$}
            \hbox{\lower.4\expt
       \hbox{\lower3\expt\hbox{\lower\dp0
            \hbox{\vbox{\hrule height.4\expt
       \hbox{\vrule width.4\expt\hskip3\expt
            \vbox{\vskip3\expt\box0\vskip2\expt}%
       \hskip3\expt\vrule width.4\expt}\hrule height.4\expt}}}}}}
      \begin{document}
       \pagestyle{myheadings}
      \markboth{ Gelu Popescu}{Multi-Toeplitz operators on noncommutative hyperballs}

      \title [Multi-Toeplitz operators associated with regular polydomains]
      { Multi-Toeplitz operators associated with regular polydomains  }
        \author{Gelu Popescu}
\date{February 15, 2019}
     \thanks{Research supported in part by  NSF grant DMS 1500922}
       \subjclass[2010]{Primary: 47B35; 47A56,   Secondary:  47B37; 47A62.
   }
      \keywords{Multivariable operator theory, Multi-Toeplitz operator, Noncommutative polydomain, Full Fock space,  Pluriharmonic function.
 }
      \address{Department of Mathematics, The University of Texas
      at San Antonio \\ San Antonio, TX 78249, USA}
      \email{\tt gelu.popescu@utsa.edu}

\begin{abstract}
In this paper we introduce and study the class of weighted multi-Toeplitz operators associated with noncommutative polydomains ${\bf D_f^m}$, ${\bf m}:=(m_1,\ldots, m_k)\in \NN^k$, generated by $k$-tuples  ${\bf f}:=(f_1,\ldots, f_k)$ of positive regular free holomorphic functions in a neighborhood of the origin. These operators are acting on the tensor product $F^2(H_{n_1})\otimes \cdots \otimes F^2(H_{n_k})$ of full Fock spaces with $n_i$ generators or, equivalently, they can be viewed as multi-Toeplitz operators acting  on tensor products of weighted full Fock spaces. For a large class of polydomains, we show that there are no non-zero compact multi-Toeplitz  operators.

We characterize the weighted multi-Toeplitz operators in terms of bounded free $k$-pluriharmonic functions on the radial part of ${\bf D_f^m}$ and use the result to obtain an analogue of the Dirichlet extension problem for free $k$-pluriharmonic functions.
We   show that the weighted multi-Toeplitz operators have noncommutative Fourier representations which can be viewed as noncommutative symbols and can be used to recover the associated  operators. We also prove that the  weighted multi-Toeplitz operators satisfy a Brown-Halmos type equation associated with the polydomain ${\bf D_f^m}$.

All the results hold, in particular, for the multi-Toeplitz operators acting on the  reproducing kernel Hilbert space with    reproducing kernel
 $$
 \kappa_{\bf m}(z,w):=\prod_{i=1}^k \frac{1}{\left(1-\sum_{p=1}^{N_i} a_{i,p} \bar z_i^pw_i^p\right)^{m_i}}, \qquad z=(z_1,\ldots, z_k), w=(w_1,\ldots, w_k) \in \DD^k,
 $$
 where $a_{i,p}\geq 0$ and $a_{i,1}>0$.
\end{abstract}

      \maketitle

\bigskip

\section*{Introduction}

 The study of Toeplitz operators  was initiated  by  O.Toeplitz \cite{T} and  was continued,  later on,  with the seminal work of Brown and Halmos \cite{BH}. Over the years, this study has been  extended  to Toeplitz operators  acting on  Hilbert spaces of holomorphic functions  on the unit disc (see \cite{HKZ}) such as the Bergman space and weighted Bergman space,
 and also to higher dimensional setting  involving holomorphic functions in several complex variables on various classes of domains in $\CC^n$ (see Upmeier's book \cite{U}). We refer the reader to \cite{BS}, \cite{Dou}, \cite{RR}, and \cite{HKZ}  for a   comprehensive account on Toeplitz operators and their applications.

   In the noncommutative multivariable setting, a study of  unweighted multi-Toeplitz operators on the full Fock  space  $F^2(H_n)$ with $n$ generators   was initiated in \cite{Po-multi}, \cite{Po-analytic} and has had an important impact in multivariable operator theory  and the structure of free semigroups algebras (see \cite{DP2}, \cite{DKP}, \cite{DLP}, \cite{Po-entropy}, \cite{Po-pluriharmonic}, \cite{Ken1}, \cite{Ken2}).
 Unweighted  multi-Toeplitz operators associated with noncommutative polyballs  and acting on tensor products of full Fock spaces were studied in \cite{Po-pluri}.  In \cite{Po-Toeplitz-poly-hyperball}, we obtained a Brown-Halmos \cite{BH} type characterization  of multi-Toeplitz operators associated with noncommutative poly-hyperballs which extended  the characterization of Toeplitz operators with harmonic symbols on the weighted Bergman space $A_m(\DD)$ obtained by Louhichi and Olofsson \cite{LO}, as well as Eschmeier and Langend\" orfer  \cite{EL}  extension  to  the unit ball  of $ \CC^n$.

Very recently \cite{Po-Toeplitz}, we initiated the study of weighted  multi-Toeplitz operators associated with noncommutative regular domains    ${\cD}_f^m(\cH)$, $m\in \NN$,   generated by    positive regular free holomorphic functions  $f$ in a neighborhood of the origin.
In the present paper, which can be seen as a continuation of \cite{Po-Toeplitz}, we introduce and study
the class of weighted multi-Toeplitz operators associated with noncommutative polydomains.

   To present our results, we need  some definitions.
Let $k\in \NN:=\{1,2,\ldots, \}$ and ${\bf n}:=(n_1,\ldots, n_k)\in \NN^k$.
For each $i\in \{1,\ldots, k\}$,
let $\FF_{n_i}^+$ be the unital free semigroup on $n_i$ generators
$g_{1}^i,\ldots, g_{n_i}^i$ and the identity $g_{0}^i$.  The length of $\alpha\in
\FF_{n_i}^+$ is defined by $|\alpha|:=0$ if $\alpha=g_0^i$  and
$|\alpha|:=p$ if
 $\alpha=g_{j_1}^i\cdots g_{j_p}^i$, where $j_1,\ldots, j_p\in \{1,\ldots, n_i\}$.
If $Z_{i,1},\ldots,Z_{i,n_i}$  are  noncommuting indeterminates,   we
denote $Z_{i,\alpha}:= Z_{i,j_1}\cdots Z_{i,j_p}$  and $Z_{g_0^i}:=1$.
 Let  $f_i:= \sum_{\alpha\in
\FF_{n_i}^+} a_{i,\alpha} Z_{i,\alpha}$, \ $a_{i,\alpha}\in \CC$,  be a formal power series in $n_i$ noncommuting indeterminates $Z_{i,1},\ldots,Z_{i,n_i}$. We say that $f_i$ is
a {\it
positive regular free holomorphic function} if the following conditions hold:
$a_{i,\alpha}\geq 0$ for
any $\alpha\in \FF_{n_i}^+$, \ $a_{i,g_{0}^i}=0$,
   $a_{i,g_{j}^i}>0$ for  $j=1,\ldots, n_i$, and
 $$
\limsup_{k\to\infty} \left( \sum_{|\alpha|=k}
a_{i,\alpha}^2\right)^{1/2k}<\infty.
 $$
Given $X_i:=(X_{i,1},\ldots, X_{i,n_i})\in B(\cH)^{n_i}$, define the map $\Phi_{f_i,X_i}:B(\cH)\to B(\cH)$  by setting
 $$
\Phi_{f_i,X_i}(Y):=\sum_{k=1}^\infty\sum_{\alpha\in \FF_{n_i}^+,|\alpha|=k} a_{i,\alpha} X_{i,\alpha}
YX_{i,\alpha}^*, \qquad   Y\in B(\cH),$$
 where  the convergence is in the week
operator topology and $B(\cH)$ is the algebra of all bounded linear operators on the Hilbert space $\cH$.

Let ${\bf m}:=(m_1,\ldots, m_k)$, where $m_i\in\NN$,   and let ${\bf f}:=(f_1,\ldots,f_k)$ be a $k$-tuple of positive regular free holomorphic functions. We denote by $B(\cH)^{n_1}\times_c\cdots \times_c B(\cH)^{n_k}$
   the set of all tuples  ${\bf X}=({ X}_1,\ldots, { X}_k)\in B(\cH)^{n_1}\times\cdots \times B(\cH)^{n_k}$      with the property that, for any $p,q\in \{1,\ldots, k\}$, $p\neq q$, the entries of ${ X}_p$ are commuting with the entries of ${ X}_q$.
  The {\it noncommutative polydomain} ${\bf D_f^m}$ is defined by its Hilbert space representations
${\bf D_f^m}(\cH)$ on Hilbert spaces $\cH$, where  ${\bf D_f^m}(\cH)$ is     the
  the set of all $k$-tuples
$${\bf X}:=(X_1,\ldots, X_k)\in   B(\cH)^{n_1}\times_c\cdots \times_c B(\cH)^{n_k}$$
with the property that
$$
(id-\Phi_{f_1,X_1})^{p_1}\circ\cdots \circ(id-\Phi_{f_k,X_k})^{p_k}(I)\geq 0,\qquad {\bf0}\leq {\bf p}\leq{\bf m}
$$
where ${\bf p}=(p_1,\ldots, p_k)$, with $p_i\in \NN\cup\{0\}$.
We use the convention that $(id-\Phi_{f_i,X_i})^0=id$.
In \cite{Po-domains-models}, we showed that  each  regular polydomain ${\bf D_f^m}$ has a {\it left universal model} ${\bf W}:=\{{\bf W}_{i,j}\}$ consisting on weighted left creation operators acting  on the tensor product $F^2(H_{n_1})\otimes \cdots \otimes F^2(H_{n_k})$ of full Fock spaces with $n_i$ generators (see Section 1 for  the definitions). This was used in \cite{Po-Berezin2}, \cite{Po-Berezin1} to develop an operator model theory and a theory of free holomorphic functions on these polydomains and  associated noncommutative varieties.
This study was continued recently in \cite{Po-invariant},  where we obtained a Beurling type characterization of the joint invariant subspaces under the universal model  ${\bf W}:=\{{\bf W}_{i,j}\}$ and developed an operator model theory for completely non-coisometric  elements in polydomains (resp. varieties) in terms of characteristic functions.

The goal of the present paper is to show that, as in the classical case of Toeplitz operators on the Hardy space $H^2(\DD)$ of the disc $\DD=\{z\in \CC: |z|<1\}$, there is a class of multi-Toeplitz operators associated with each noncommutative regular polydomain ${\bf D_f^m}$ which are acting on tensor products of full Fock spaces. We initiate  the study of these operators in close connection with the theory of free $k$-pluriharmonic functions on polydomains.

  In Section 1, we introduce the multi-homogeneous operators acting on tensor products of full Fock spaces  and the weighted multi-Toeplitz operators    associated with the regular  polydomains
${\bf D_{f}^m}$. The main result of this section is a characterization  of  the multi-homogeneous operators  which are also weighted multi-Toeplitz operators (see Theorem \ref{multi-homo}). In addition, we show that  any weighted multi-Toeplitz operator  $T\in B(\cK\bigotimes \otimes_{s=1}^k  F^2(H_{n_s}))$ has
a unique formal Fourier representation
$$
\varphi_T({\bf W}, {\bf W}^*):=\sum_{(\boldsymbol \alpha, \boldsymbol \beta)\in \boldsymbol \cJ}A_{   (\boldsymbol \alpha, \boldsymbol \beta)}\otimes {\bf W}_{\boldsymbol\alpha}{\bf W}_{\boldsymbol\beta}^*,
$$
 which can be viewed as  a  noncommutative symbol.
 At the end of this section, we show that,  for a large class of polydomains, the associated multi-Toeplitz operators contain no non-zero compact operators.

 In Section 2, we  characterize the weighted multi-Toeplitz operators in terms of bounded free $k$-pluriharmonic functions on the radial part of ${\bf D_f^m}$ (see Theorem \ref{main}) and  also characterize the formal series
    which are  noncommutative Fourier representations  of weighted multi-Toeplitz operators (see Theorem \ref{Fourier}).  These results are used,  in Section 3,   to prove that the   bounded free $k$-pluriharmonic  functions on the radial polydomain ${\bf D}_{{\bf f},rad}^{\bf m}$
  are precisely those  that are noncommutative Berezin transforms of the weighted multi-Toeplitz operators. In this setting, we solve the Dirichlet extension problem.

In Section 4, we prove that the weighted multi-Toeplitz operators satisfy a Brown-Halmos type equation associated with the polydomain ${\bf D_f^m}$, leaving the converse as an open problem.

\bigskip

\section{Multi-homogeneous operators and weighted multi-Toeplitz operators associated with polydomains }

In this section, we introduce the multi-homogeneous operators acting on tensor products of full Fock spaces  and the weighted multi-Toeplitz operators    associated with the polydomain
${\bf D_{f}^m}$. We characterize the multi-homogeneous operators  which are also weighted multi-Toeplitz operators and show that any weighted multi-Toeplitz operator has a Fourier representation which can be seen as a noncommutative symbol.
For a large class of polydomains, we show that the associated weighted multi-Toeplitz operators contain no non-zero compact operators.

  Let $H_{n_i}$ be
an $n_i$-dimensional complex  Hilbert space with orthonormal basis
$e_1^i,\dots,e_{n_i}^i$.
  We consider the full Fock space  of $H_{n_i}$ defined by
$$F^2(H_{n_i}):=\bigoplus_{p\geq 0} H_{n_i}^{\otimes p},$$
where $H_{n_i}^{\otimes 0}:=\CC 1$ and $H_{n_i}^{\otimes p}$ is the
(Hilbert) tensor product of $p$ copies of $H_{n_i}$. Set $e_\alpha^i :=
e^i_{j_1}\otimes \cdots \otimes e^i_{j_p}$ if
$\alpha=g^i_{j_1}\cdots g^i_{j_p}\in \FF_{n_i}^+$
 and $e^i_{g^i_0}:= 1\in \CC$.
Note that $\{e^i_\alpha:\alpha\in\FF_{n_i}^+\}$ is an orthonormal
basis of $F^2(H_{n_i})$.

Let $\Gamma:\TT^k\to B( \otimes_{s=1}^k F^2(H_{n_s}))$ be the strongly continuous unitary representation of the $k$-dimensional torus, defined by
$$
\Gamma(e^{i\theta_1},\ldots, e^{i\theta_k})f
:=\sum_{{\alpha_s\in \FF_{n_s}^+ }\atop{s\in \{1,\ldots,k\}}} e^{i\theta_1|\alpha_1|}\cdots e^{i\theta_k|\alpha_k|} a_{\alpha_1,\ldots, \alpha_k} e^1_{\alpha_1}\otimes \cdots \otimes e^k_{\alpha_k}.
$$
for any $f=\sum_{{\alpha_s\in \FF_{n_s}^+ }\atop{s\in \{1,\ldots,k\}}} a_{\alpha_1,\ldots, \alpha_k} e^1_{\alpha_1}\otimes \cdots \otimes e^k_{\alpha_k}\in \otimes_{s=1}^k F^2(H_{n_s})$.
The spectral subspaces $\{\cE_{p_1,\ldots, p_k}\}_{(p_1,\ldots, p_k)\in \ZZ^k}$ of $\Gamma$ are
the images of the orthogonal projections ${\bf P}_{p_1,\ldots, p_k}\in B(\otimes_{s=1}^k F^2(H_{n_s}))$ defined by
$$
{\bf P}_{p_1,\ldots, p_k}:= \left(\frac{1}{2\pi}\right)^k
\int_0^{2\pi}\cdots \int_0^{2\pi} e^{-ip_1\theta_1}\cdots e^{-ip_k\theta_k}\Gamma(e^{i\theta_1},\ldots, e^{i\theta_k})d\theta_1\ldots d\theta_k,
$$
where the integral is defined as a weak integral  and the integrant is a continuous function in  the strong operator topology. Note also that
 $$
\cE_{p_1,\ldots, p_k}:=\left\{ f\in \otimes_{s=1}^k F^2(H_{n_s}): \
\Gamma(e^{i\theta_1},\ldots, e^{i\theta_k})f=e^{i\theta_1 p_1}\cdots e^{i\theta_k p_k}f\right\}
$$
for $(p_1,\ldots, p_k)\in \ZZ^k$, and we have
 the orthogonal decomposition
$$
 \otimes_{s=1}^k F^2(H_{n_s})=\bigoplus_{(p_1,\ldots, p_k)\in \ZZ^k} \cE_{p_1,\ldots, p_k}.
 $$
We remark that if $p_s<0$ for some $s\in \{1,\ldots, k\}$, then   $\cE_{p_1,\ldots,p_k}=\{0\}$.
  In what follows, we  use the notation
$\Gamma (e^{i {\boldsymbol\theta}}):= \Gamma(e^{i\theta_1},\ldots, e^{i\theta_k})$. Let $\cK$ be a separable Hilbert space.

\begin{definition}  \label{mhp} If $T\in B(\cK\bigotimes \otimes_{s=1}^k  F^2(H_{n_s}))$ and ${\bf s}:=(s_1,\ldots, s_k)\in \ZZ^k$ we define the ${\bf s}$-multi-homogeneous part of $T$  to be the operator $T_{ \bf s}\in B(\cK\bigotimes \otimes_{s=1}^k  F^2(H_{n_s}))$ defined by
$$
T_{\bf s}:=
\left(\frac{1}{2\pi}\right)^k
\int_0^{2\pi}\cdots \int_0^{2\pi} e^{-is_1\theta_1}\cdots e^{-is_k\theta_k}\left(I_\cK\otimes \Gamma (e^{i {\boldsymbol\theta}})\right) T
\left(I_\cK\otimes \Gamma (e^{i {\boldsymbol\theta}})\right)^* d\theta_1\ldots d\theta_k.
$$

\end{definition}

\begin{proposition}\label{prop}  For any  ${\bf s}=(s_1,\ldots, s_k)\in \ZZ^k$,
the ${\bf s}$-multi-homogeneous part of $T\in B(\cK\bigotimes \otimes_{s=1}^k  F^2(H_{n_s}))$ has the following properties:
\begin{enumerate}
\item[(i)] $(T^*)_{\bf s}=(T_{-{\bf s}})^*$;

\item[(ii)] $T_{\bf s}\left(\cK\otimes \cE_{p_1,\ldots, p_k}\right)\subset  \cK\otimes \cE_{s_1+p_1,\ldots, s_k+p_k}$
 for any $(p_1,\ldots, p_k)\in \ZZ^k$.
\end{enumerate}
\end{proposition}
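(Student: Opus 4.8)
The plan is to derive both statements from a single covariance identity for the multi-homogeneous part. Throughout, abbreviate $U_{\boldsymbol\varphi}:=I_\cK\otimes \Gamma(e^{i{\boldsymbol\varphi}})$ for ${\boldsymbol\varphi}=(\varphi_1,\ldots,\varphi_k)\in\TT^k$. Since $\Gamma$ is a strongly continuous unitary representation of $\TT^k$, each $U_{\boldsymbol\varphi}$ is unitary, $U_{\boldsymbol\varphi}U_{\boldsymbol\theta}=U_{{\boldsymbol\varphi}+{\boldsymbol\theta}}$, and $U_{\boldsymbol\varphi}^*=U_{-{\boldsymbol\varphi}}$; moreover the integrand $e^{-i{\bf s}\cdot{\boldsymbol\theta}}\,U_{\boldsymbol\theta}TU_{\boldsymbol\theta}^*$ in Definition \ref{mhp} is continuous in the strong operator topology, so that $T_{\bf s}$ is a well-defined weak integral and this integral commutes with left and right multiplication by the bounded operators $U_{\boldsymbol\varphi},U_{\boldsymbol\varphi}^*$ and with the adjoint operation.

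First I would prove that, for every ${\boldsymbol\varphi}\in\TT^k$,
$$
U_{\boldsymbol\varphi}\,T_{\bf s}\,U_{\boldsymbol\varphi}^*=e^{i(s_1\varphi_1+\cdots+s_k\varphi_k)}\,T_{\bf s},\qquad\text{equivalently}\qquad U_{\boldsymbol\varphi}\,T_{\bf s}=e^{i{\bf s}\cdot{\boldsymbol\varphi}}\,T_{\bf s}\,U_{\boldsymbol\varphi}.
$$
Conjugating the defining integral by $U_{\boldsymbol\varphi}$ and using the group law turns the integrand into $e^{-i{\bf s}\cdot{\boldsymbol\theta}}\,U_{{\boldsymbol\varphi}+{\boldsymbol\theta}}\,T\,U_{{\boldsymbol\varphi}+{\boldsymbol\theta}}^*$; the change of variables ${\boldsymbol\psi}={\boldsymbol\varphi}+{\boldsymbol\theta}$ — legitimate because the integrand is $2\pi$-periodic in each coordinate — pulls out the scalar $e^{i{\bf s}\cdot{\boldsymbol\varphi}}$ and reproduces the integral defining $T_{\bf s}$. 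This is essentially the only computation needed, and it presents no real difficulty.

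For part (i), I would take adjoints directly in Definition \ref{mhp}. By unitarity of $U_{\boldsymbol\theta}$ one has $(U_{\boldsymbol\theta}TU_{\boldsymbol\theta}^*)^*=U_{\boldsymbol\theta}T^*U_{\boldsymbol\theta}^*$, while the scalar $e^{-i{\bf s}\cdot{\boldsymbol\theta}}$ passes to $\overline{e^{-i{\bf s}\cdot{\boldsymbol\theta}}}=e^{-i(-{\bf s})\cdot{\boldsymbol\theta}}$; since the weak integral commutes with the adjoint operation, this yields $(T_{\bf s})^*=(T^*)_{-{\bf s}}$. Replacing ${\bf s}$ by $-{\bf s}$ gives $(T^*)_{\bf s}=(T_{-{\bf s}})^*$, which is (i).

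For part (ii), recall from the discussion preceding the proposition that $\cE_{p_1,\ldots,p_k}=\{f\in\otimes_{s=1}^kF^2(H_{n_s}):\Gamma(e^{i{\boldsymbol\theta}})f=e^{i{\bf p}\cdot{\boldsymbol\theta}}f\ \text{for all}\ {\boldsymbol\theta}\in\TT^k\}$ and that $\cK\otimes\big(\otimes_{s=1}^kF^2(H_{n_s})\big)=\bigoplus_{{\bf q}\in\ZZ^k}\cK\otimes\cE_{\bf q}$ with $U_{\boldsymbol\varphi}$ acting as the scalar $e^{i{\bf q}\cdot{\boldsymbol\varphi}}$ on $\cK\otimes\cE_{\bf q}$; hence a vector $h$ lies in $\cK\otimes\cE_{\bf q}$ if and only if $U_{\boldsymbol\varphi}h=e^{i{\bf q}\cdot{\boldsymbol\varphi}}h$ for all ${\boldsymbol\varphi}$ (apply the spectral projection $I_\cK\otimes{\bf P}_{\bf q}$, itself a weak integral of this type, to such an $h$). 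Now fix $\xi\in\cK$ and $f\in\cE_{p_1,\ldots,p_k}$. The covariance identity gives
$$
U_{\boldsymbol\varphi}\big(T_{\bf s}(\xi\otimes f)\big)=e^{i{\bf s}\cdot{\boldsymbol\varphi}}\,T_{\bf s}\big(U_{\boldsymbol\varphi}(\xi\otimes f)\big)=e^{i{\bf s}\cdot{\boldsymbol\varphi}}\,T_{\bf s}\big(\xi\otimes e^{i{\bf p}\cdot{\boldsymbol\varphi}}f\big)=e^{i({\bf s}+{\bf p})\cdot{\boldsymbol\varphi}}\,T_{\bf s}(\xi\otimes f)
$$
for every ${\boldsymbol\varphi}\in\TT^k$, so $T_{\bf s}(\xi\otimes f)\in\cK\otimes\cE_{s_1+p_1,\ldots,s_k+p_k}$. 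Since the vectors $\xi\otimes f$ with $\xi\in\cK$ and $f\in\cE_{p_1,\ldots,p_k}$ span a dense subspace of $\cK\otimes\cE_{p_1,\ldots,p_k}$, and $T_{\bf s}$ is bounded while $\cK\otimes\cE_{s_1+p_1,\ldots,s_k+p_k}$ is closed, the inclusion in (ii) follows. The only points deserving a line of justification are the change of variables (immediate from $2\pi$-periodicity) and the interchange of weak integration with multiplication and adjunction (standard for strongly continuous integrands); I do not expect any substantial obstacle.
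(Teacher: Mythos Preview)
Your proof is correct. For part (ii) you take a slightly different route from the paper: you first establish the covariance identity $U_{\boldsymbol\varphi}T_{\bf s}U_{\boldsymbol\varphi}^* = e^{i{\bf s}\cdot{\boldsymbol\varphi}}T_{\bf s}$ via a change of variables in the defining integral, then use it to check that $T_{\bf s}(\xi\otimes f)$ is a joint eigenvector of the $U_{\boldsymbol\varphi}$ with eigenvalue $e^{i({\bf s}+{\bf p})\cdot{\boldsymbol\varphi}}$. The paper instead computes $T_{\bf s}f$ directly for $f\in\cK\otimes\cE_{p_1,\ldots,p_k}$: since $\Gamma(e^{i{\boldsymbol\theta}})^*$ acts as the scalar $e^{-i{\bf p}\cdot{\boldsymbol\theta}}$ on $\cE_{\bf p}$, the defining integral for $T_{\bf s}f$ collapses to $(I_\cK\otimes{\bf P}_{s_1+p_1,\ldots,s_k+p_k})Tf$, which immediately gives the inclusion and in fact the sharper identity $T_{\bf s}|_{\cK\otimes\cE_{\bf p}}=(I_\cK\otimes{\bf P}_{{\bf s}+{\bf p}})T|_{\cK\otimes\cE_{\bf p}}$. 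Your covariance identity is a clean reusable fact; the paper's one-line computation is shorter and yields the explicit projection formula as a byproduct. Part (i) is handled identically in both.
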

\begin{proof}  Part (i) is clear.
To prove part (ii), note  that
$$\Gamma (e^{i {\boldsymbol\theta}})^*|_{\cE_{p_1,\ldots, p_k}}
=e^{-ip_1\theta_1}\cdots e^{-ip_k\theta_k}I_{\cE_{p_1,\ldots, p_k}},\qquad (p_1,\ldots, p_k)\in \ZZ^k.
$$

Consequently, if  ${\bf s}=(s_1,\ldots, s_k)\in \ZZ^k$ and  $f\in \cK\otimes \cE_{p_1,\ldots, p_k}$, then
\begin{equation*}
\begin{split}
T_{\bf s}f&=\left(\frac{1}{2\pi}\right)^k
\int_0^{2\pi}\cdots \int_0^{2\pi} e^{-i(s_1+p_1)\theta_1}\cdots e^{-i(s_k+p_k)\theta_k}\left(I_\cK\otimes \Gamma (e^{i {\boldsymbol\theta}})\right) Tf d\theta_1\ldots d\theta_k\\
 &=(I_\cK\otimes{\bf P}_{s_1+p_1,\ldots, s_k+p_k})Tf.
\end{split}
\end{equation*}
Therefore,
$$T_{\bf s}\left(\cK\otimes \cE_{p_1,\ldots, p_k}\right)\subset  \cK\otimes \cE_{s_1+p_1,\ldots, s_k+p_k}$$
 for any $(p_1,\ldots, p_k)\in \ZZ^k$, which completes the proof.
\end{proof}

\begin{definition} An operator  $A\in B(\cK\bigotimes \otimes_{s=1}^k  F^2(H_{n_s}))$ is said to be  multi-homogeneous of degree ${\bf s}=(s_1,\ldots, s_k)\in \ZZ^k$ if
$$
A\left(\cK\otimes \cE_{p_1,\ldots, p_k}\right)\subset  \cK\otimes \cE_{s_1+p_1,\ldots, s_k+p_k}
$$
 for any $(p_1,\ldots, p_k)\in \ZZ^k$.
\end{definition}

Brown and Halmos \cite{BH} proved that a necessary and sufficient condition that an operator on the Hardy space $H^2(\TT)$ be a Toeplitz operator is that its matrix
 $[\lambda_{ij}]$ with respect to the standard basis $\chi_k(e^{i\theta})=e^{ik\theta}$, $k\in \{0,1,\ldots\}$, be a Toeplitz  matrix, i.e
 $$
 \lambda_{i+1,j+1}=\lambda_{ij},\qquad  i,j\in \{0,1,\ldots\},
 $$
 which is equivalent to   the fact that
 $\lambda_{ij}=a_{i-j}$, where $\varphi =\sum_{k\in \ZZ}a_k \chi_k$ is the Fourier expansion of the symbol $\varphi\in L^\infty(\TT)$.  In what follows, we find an  extension of their result to multi-Toeplitz operators associated with  noncommutative regular  polydomains.

First, we need some notations.  If $\omega, \gamma\in \FF_n^+$,
we say that $\omega
\geq_{r}\gamma$ if there is $\sigma\in
\FF_n^+ $ such that $\omega=\sigma \gamma$. In this
case  we set $\omega\backslash_r \gamma:=\sigma$. If $\sigma\neq g_0$ we write $\omega>_r \gamma$. We say that $\omega$ and $\gamma$ are {\it comparable} if either $\omega
\geq_{r}\gamma$ or $\gamma>_r\omega$.
Let $\boldsymbol\omega=(\omega_1,\ldots, \omega_k)$ and $\boldsymbol\gamma=(\gamma_1,\ldots, \gamma_k)$ be in ${\bf F}_{\bf n }^+:=\FF_{n_1}^+\times\cdots \times \FF_{n_k}^+$. We say that $\boldsymbol\omega$ and $\boldsymbol\gamma$ are comparable  if, for each $i\in \{1,\ldots, k\}$,  either one of the relations  $\omega_i<_r \gamma_i$, $\gamma_i<_r \omega_i$, or $\omega_i=\gamma_i$ holds. We also use the standard notation $s^+:=\max\{s,0\}$ and $s^-:=\max\{-s,0\}$ for any $s\in \ZZ$.

 We denote by $\boldsymbol\cC$ the set of all pairs $(\boldsymbol \sigma, \boldsymbol \beta)\in {\bf F}_{\bf n}^+\times {\bf F}_{\bf n}^+$ which are comparable, and  set
 $$
 \boldsymbol\cJ:= \left\{ (\boldsymbol \sigma, \boldsymbol \beta):=(\sigma_1,\ldots,\sigma_k,\beta_1,\ldots, \beta_k): \   \sigma_i,\beta_i\in \FF_{n_i}^+  \text{ with } |\sigma_i|=s_i^+,   |\beta_i|=s_i^-, s_i\in\ZZ\right\}.
 $$
We introduce the {\it simplification function}  ${\bf s}:\boldsymbol\cC\to \boldsymbol\cJ$ defined by
${\bf s}(\boldsymbol \omega, \boldsymbol\gamma):= (\boldsymbol \sigma,\boldsymbol \beta)$, where,   if  $\boldsymbol\omega=(\omega_1,\ldots, \omega_k)$ and $\boldsymbol\gamma=(\gamma_1,\ldots, \gamma_k)$, then
for any $i\in \{1,\ldots, k\}$,
$$
\sigma_i:=
\begin{cases} \omega_i\backslash_r\gamma_i, & \text{ if } \omega_i\geq_r \gamma_i,\\
g_0^i, &\text{otherwise,}
\end{cases}
  \quad
\beta_i:=\begin{cases} \gamma_i\backslash_r\omega_i, & \text{ if } \gamma_i\geq_r \omega_i,\\
g_0^i, &\text{otherwise}.
\end{cases}
$$

\begin{definition} \label{MT}  An operator $T\in B(\cK\bigotimes \otimes_{s=1}^k  F^2(H_{n_s}))$ is called  weighted (right) multi-Toeplitz operator associated with the polydomain ${\bf D_f^m}$  if there exist operators $\{A_{(\boldsymbol \sigma, \boldsymbol \beta)}\}_{(\boldsymbol \sigma, \boldsymbol \beta)\in \boldsymbol\cJ}\subset B(\cK)$ such that,
for any $ \boldsymbol\omega, \boldsymbol\gamma \in {\bf F}_{\bf n}^+$ and $x,y\in \cK$,
$$
\left<T(x\otimes e_{\boldsymbol\gamma }), y\otimes e_{\boldsymbol\omega} \right>
=\begin{cases}
\tau_{(\boldsymbol\omega,\boldsymbol\gamma)}\left<A_{{\bf s}(\boldsymbol \omega, \boldsymbol\gamma)}x,y\right>, & \text{ if  } (\boldsymbol \omega, \boldsymbol\gamma)\in \boldsymbol\cC,\\
0,&  \text{ if  } (\boldsymbol \omega, \boldsymbol\gamma)\in ({\bf F}_{\bf n }^+\times {\bf F}_{\bf n }^+)\backslash \boldsymbol\cC,

\end{cases}
$$
where the weights $ \{\tau_{(\boldsymbol\omega,\boldsymbol\gamma)}\}_{(\boldsymbol \omega, \boldsymbol\gamma)\in \boldsymbol\cC}$ are given by
$$ \tau_{(\boldsymbol\omega,\boldsymbol\gamma)}:=
 \prod_{i=1}^k \sqrt{\frac{b^{(m_i)}_{i,\min\{\omega_i, \gamma_i\}}}{b^{(m_i)}_{i,\max\{\omega_i, \gamma_i\}}}}
 $$
 and the coefficients $b_{i,\beta_i}^{(m_i)}$ are given by  relation \eqref{b-al2}.
  \end{definition}

  \begin{proposition}\label{Toep-def2}
  An operator $T\in B(\cK\bigotimes \otimes_{s=1}^k  F^2(H_{n_s}))$ is   weighted multi-Toeplitz  if and only if,
  for any $ \boldsymbol\omega, \boldsymbol\gamma \in  {\bf F}_{n}^+$,
\begin{equation*}
\left<T(x\otimes e_{\boldsymbol\gamma }), y\otimes e_{\boldsymbol\omega} \right>
=\begin{cases}
\frac{\tau_{(\boldsymbol\omega,\boldsymbol\gamma)}}{\tau_{(\boldsymbol \omega', \boldsymbol\gamma')}}\left<T(x\otimes e_{\boldsymbol\gamma'}), y\otimes e_{\boldsymbol\omega'} \right>, & \text{ if  } (\boldsymbol \omega, \boldsymbol\gamma)\in \boldsymbol\cC,\\
0,&  \text{ if  } (\boldsymbol \omega, \boldsymbol\gamma)\in ({\bf F}_{\bf n }^+\times {\bf F}_{\bf n }^+)\backslash \boldsymbol\cC,
\end{cases}
\end{equation*}
where  $(\boldsymbol \omega', \boldsymbol\gamma'):={\bf s}(\boldsymbol \omega, \boldsymbol\gamma)$  when $(\boldsymbol \omega, \boldsymbol\gamma)\in \boldsymbol\cC$.
  \end{proposition}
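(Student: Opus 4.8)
The plan is to show the two conditions in Definition \ref{MT} and Proposition \ref{Toep-def2} are equivalent, treating the "comparable" case and the "non-comparable" case separately, the latter being immediate since both formulations declare the relevant inner products to be $0$. So assume $(\boldsymbol\omega,\boldsymbol\gamma)\in\boldsymbol\cC$ and write $(\boldsymbol\omega',\boldsymbol\gamma'):={\bf s}(\boldsymbol\omega,\boldsymbol\gamma)$. The first observation I would record is that ${\bf s}$ is idempotent in the sense that ${\bf s}(\boldsymbol\omega',\boldsymbol\gamma')=(\boldsymbol\omega',\boldsymbol\gamma')$: by construction $(\boldsymbol\omega',\boldsymbol\gamma')\in\boldsymbol\cJ$, and on pairs in $\boldsymbol\cJ$ the simplification function is the identity because for each $i$ at least one of $\omega_i',\gamma_i'$ equals $g_0^i$, so $\omega_i'\backslash_r\gamma_i'=\omega_i'$ when $\omega_i'\geq_r\gamma_i'$ (this happens exactly when $\gamma_i'=g_0^i$), and symmetrically for $\beta_i$. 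This means that for pairs already in $\boldsymbol\cJ$ we have $\tau_{(\boldsymbol\omega',\boldsymbol\gamma')}=\prod_{i=1}^k\sqrt{b^{(m_i)}_{i,\min\{\omega_i',\gamma_i'\}}/b^{(m_i)}_{i,\max\{\omega_i',\gamma_i'\}}}$, and since one of $\omega_i',\gamma_i'$ is $g_0^i$ with $b^{(m_i)}_{i,g_0^i}=1$ (from \eqref{b-al2}), this is $\prod_{i=1}^k\sqrt{1/b^{(m_i)}_{i,\max\{\omega_i',\gamma_i'\}}}$.

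For the forward direction, assume $T$ is weighted multi-Toeplitz with coefficients $\{A_{(\boldsymbol\sigma,\boldsymbol\beta)}\}$. Then by definition $\langle T(x\otimes e_{\boldsymbol\gamma}),y\otimes e_{\boldsymbol\omega}\rangle=\tau_{(\boldsymbol\omega,\boldsymbol\gamma)}\langle A_{{\bf s}(\boldsymbol\omega,\boldsymbol\gamma)}x,y\rangle=\tau_{(\boldsymbol\omega,\boldsymbol\gamma)}\langle A_{(\boldsymbol\omega',\boldsymbol\gamma')}x,y\rangle$. Applying the same definition to the pair $(\boldsymbol\omega',\boldsymbol\gamma')$, which lies in $\boldsymbol\cC$ and whose simplification is itself, gives $\langle T(x\otimes e_{\boldsymbol\gamma'}),y\otimes e_{\boldsymbol\omega'}\rangle=\tau_{(\boldsymbol\omega',\boldsymbol\gamma')}\langle A_{(\boldsymbol\omega',\boldsymbol\gamma')}x,y\rangle$. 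Since $\tau_{(\boldsymbol\omega',\boldsymbol\gamma')}\neq 0$ (all $b^{(m_i)}_{i,\beta_i}$ are strictly positive, as $f_i$ is positive regular), we may solve for $\langle A_{(\boldsymbol\omega',\boldsymbol\gamma')}x,y\rangle$ and substitute, obtaining precisely the identity claimed in Proposition \ref{Toep-def2}.

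Conversely, suppose $T$ satisfies the stated relation. Define, for each $(\boldsymbol\sigma,\boldsymbol\beta)\in\boldsymbol\cJ$, the sesquilinear form $(x,y)\mapsto \langle T(x\otimes e_{\boldsymbol\beta}),y\otimes e_{\boldsymbol\sigma}\rangle$; since $\|e_{\boldsymbol\sigma}\|=\|e_{\boldsymbol\beta}\|=1$ and $T$ is bounded, this form is bounded, hence equals $\langle A_{(\boldsymbol\sigma,\boldsymbol\beta)}x,y\rangle$ for a unique $A_{(\boldsymbol\sigma,\boldsymbol\beta)}\in B(\cK)$. Recall $\tau_{(\boldsymbol\sigma,\boldsymbol\beta)}=\prod_i\sqrt{1/b^{(m_i)}_{i,\max\{\sigma_i,\beta_i\}}}$ for pairs in $\boldsymbol\cJ$. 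Now for an arbitrary comparable pair $(\boldsymbol\omega,\boldsymbol\gamma)$ with simplification $(\boldsymbol\omega',\boldsymbol\gamma')$, the hypothesis gives $\langle T(x\otimes e_{\boldsymbol\gamma}),y\otimes e_{\boldsymbol\omega}\rangle=\frac{\tau_{(\boldsymbol\omega,\boldsymbol\gamma)}}{\tau_{(\boldsymbol\omega',\boldsymbol\gamma')}}\langle T(x\otimes e_{\boldsymbol\gamma'}),y\otimes e_{\boldsymbol\omega'}\rangle = \frac{\tau_{(\boldsymbol\omega,\boldsymbol\gamma)}}{\tau_{(\boldsymbol\omega',\boldsymbol\gamma')}}\langle A_{(\boldsymbol\omega',\boldsymbol\gamma')}x,y\rangle$; it remains to check that $\frac{\tau_{(\boldsymbol\omega,\boldsymbol\gamma)}}{\tau_{(\boldsymbol\omega',\boldsymbol\gamma')}}=\tau_{(\boldsymbol\omega,\boldsymbol\gamma)}$, equivalently $\tau_{(\boldsymbol\omega',\boldsymbol\gamma')}=1$, which holds by the computation above together with the fact that the convention in Definition \ref{MT} normalizes so that ${\bf s}(\boldsymbol\omega,\boldsymbol\gamma)=(\boldsymbol\omega',\boldsymbol\gamma')$ already satisfies $\max\{\omega_i',\gamma_i'\}$ paired with $g_0^i$. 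The non-comparable case is identical on both sides, finishing the equivalence.

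The main obstacle I anticipate is the bookkeeping in the last step: one must verify carefully that the quotient of weights $\tau_{(\boldsymbol\omega,\boldsymbol\gamma)}/\tau_{(\boldsymbol\omega',\boldsymbol\gamma')}$ collapses correctly, using $\min\{\omega_i',\gamma_i'\}=g_0^i$ (or the corresponding fact) coordinatewise and the normalization $b^{(m_i)}_{i,g_0^i}=1$ from \eqref{b-al2}; any off-by-one in which of $\min,\max$ lands on the identity word would break the telescoping, so I would spell out the coordinatewise identity $\tau_{(\boldsymbol\omega,\boldsymbol\gamma)}=\prod_i\sqrt{b^{(m_i)}_{i,\min\{\omega_i,\gamma_i\}}/b^{(m_i)}_{i,\max\{\omega_i,\gamma_i\}}}$ and the analogous one for the simplified pair explicitly before dividing.
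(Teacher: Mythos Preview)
Your forward direction is correct and matches the paper's argument exactly: apply Definition~\ref{MT} at both $(\boldsymbol\omega,\boldsymbol\gamma)$ and at its simplification $(\boldsymbol\omega',\boldsymbol\gamma')$, use that ${\bf s}$ fixes $\boldsymbol\cJ$, and divide.

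The converse, however, contains a genuine error. You define
\[
\langle A_{(\boldsymbol\sigma,\boldsymbol\beta)}x,y\rangle := \langle T(x\otimes e_{\boldsymbol\beta}),\,y\otimes e_{\boldsymbol\sigma}\rangle,
\]
and then, in order to recover Definition~\ref{MT}, you claim $\tau_{(\boldsymbol\omega',\boldsymbol\gamma')}=1$. But you yourself correctly computed a few lines earlier that for $(\boldsymbol\omega',\boldsymbol\gamma')\in\boldsymbol\cJ$ one has
\[
\tau_{(\boldsymbol\omega',\boldsymbol\gamma')}=\prod_{i=1}^k \frac{1}{\sqrt{b^{(m_i)}_{i,\max\{\omega_i',\gamma_i'\}}}},
\]
which is \emph{not} $1$ in general (e.g.\ already when $k=1$, $m_1=1$, and $\omega_1'$ has length $1$ you get $1/\sqrt{a_{1,\omega_1'}}$). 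The observation $\min\{\omega_i',\gamma_i'\}=g_0^i$ only kills the numerator of each factor, not the denominator.

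The fix is exactly what the paper does: absorb the missing weight into the definition of the coefficients. Set
\[
\langle A_{(\boldsymbol\omega',\boldsymbol\gamma')}x,y\rangle := \frac{1}{\tau_{(\boldsymbol\omega',\boldsymbol\gamma')}}\,\langle T(x\otimes e_{\boldsymbol\gamma'}),\,y\otimes e_{\boldsymbol\omega'}\rangle,
\]
which is legitimate since $\tau_{(\boldsymbol\omega',\boldsymbol\gamma')}\neq 0$. Then the hypothesis gives
\[
\langle T(x\otimes e_{\boldsymbol\gamma}),y\otimes e_{\boldsymbol\omega}\rangle
=\frac{\tau_{(\boldsymbol\omega,\boldsymbol\gamma)}}{\tau_{(\boldsymbol\omega',\boldsymbol\gamma')}}\cdot \tau_{(\boldsymbol\omega',\boldsymbol\gamma')}\langle A_{(\boldsymbol\omega',\boldsymbol\gamma')}x,y\rangle
=\tau_{(\boldsymbol\omega,\boldsymbol\gamma)}\langle A_{{\bf s}(\boldsymbol\omega,\boldsymbol\gamma)}x,y\rangle,
\]
and no spurious claim about $\tau_{(\boldsymbol\omega',\boldsymbol\gamma')}$ is needed.
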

  \begin{proof}
  If  $T$ is a weighted multi-Toeplitz operator and
  $(\boldsymbol \omega', \boldsymbol\gamma')\in \boldsymbol\cJ$, then ${\bf s}(\boldsymbol \omega', \boldsymbol\gamma')=(\boldsymbol \omega', \boldsymbol\gamma')$ and
  $\tau_{(\boldsymbol \omega', \boldsymbol\gamma')}=\prod_{i=1}^k \frac{1}{\sqrt{b_{i,\max\{\boldsymbol \omega_i', \boldsymbol\gamma_i'\}}}}$.
 Using   Definition \ref{MT}, we deduce that
  $$
  \left<T(x\otimes e_{\boldsymbol\gamma' }), y\otimes e_{\boldsymbol\omega'} \right>
= \tau_{(\boldsymbol\omega',\boldsymbol\gamma')}\left<A_{(\boldsymbol \omega', \boldsymbol\gamma')}x,y\right>,\qquad x,y\in \cK.
$$
 If  $ (\boldsymbol\omega, \boldsymbol\gamma) \in \boldsymbol\cC$, we set  $(\boldsymbol \omega', \boldsymbol\gamma'):={\bf s}(\boldsymbol \omega, \boldsymbol\gamma)$. The relation above implies
 $$
 \left<A_{{\bf s}(\boldsymbol \omega, \boldsymbol\gamma)}x,y\right>=\frac{1}{\tau_{(\boldsymbol \omega', \boldsymbol\gamma')}}\left<T(x\otimes e_{\boldsymbol\gamma' }), y\otimes e_{\boldsymbol\omega'} \right>.
 $$
 Consequently, due to   Definition  \ref{MT}, we deduce that   the direct implication holds.
 To prove the converse, assume that  the relation in the proposition holds.
 Then, for any    $(\boldsymbol \omega', \boldsymbol\gamma')\in \boldsymbol\cJ$, we define
 $A_{(\boldsymbol \omega', \boldsymbol\gamma')}\in B(\cK)$  by setting
 \begin{equation*}
 \left< A_{(\boldsymbol \omega', \boldsymbol\gamma')}x,y\right>:=\frac{1}{\tau_{(\boldsymbol \omega', \boldsymbol\gamma')}}\left<T(x\otimes e_{\boldsymbol\gamma' }), y\otimes e_{\boldsymbol\omega'} \right>,\qquad x,y\in \cK.
 \end{equation*}
Since ${\bf s}(\boldsymbol \omega, \boldsymbol\gamma)\in \boldsymbol\cJ$  when $(\boldsymbol \omega, \boldsymbol\gamma)\in \boldsymbol\cC$, we can use  relation  above  when $(\boldsymbol \omega', \boldsymbol\gamma'):={\bf s}(\boldsymbol \omega, \boldsymbol\gamma)$ and  the relation in the proposition,
   to deduce that $T$ is a weighted multi-Toeplitz operator.
  The proof is complete.
  \end{proof}

For each $i\in \{1,\ldots, k\}$, set
   $b_{i,g_0^i}^{(m_i)} :=1$ and
\begin{equation}
\label{b-al2}
 b_{i,\alpha}^{(m_i)}:= \sum_{j=1}^{|\alpha|}
\sum_{{\gamma_1,\ldots,\gamma_j\in \FF_{n_i}^+}\atop{{\gamma_1\cdots \gamma_j=\alpha }\atop {|\gamma_1|\geq
1,\ldots, |\gamma_j|\geq 1}}} a_{i,\gamma_1}\cdots a_{i,\gamma_j}\left(\begin{matrix} j+m-1\\m-1\end{matrix}\right)
   \qquad
\text{ if } \ \alpha\in \FF_{n_i}^+,  |\alpha|\geq 1.
\end{equation}
The diagonal operators  $D_{i,j}:F^2(H_{n_i})\to F^2(H_{n_i})$, $j\in \{1,\ldots, n_i\}$,   are defined  by setting
$$
D_{i,j}e^i_\alpha:=\sqrt{\frac{b_{i,\alpha}^{(m_i)}}{b_{i, g_j^i\alpha }^{(m_i)}}}
e^i_\alpha,\qquad
 \alpha\in \FF_{n_i}^+,
$$
where
$\{e^i_\alpha\}_{\alpha\in \FF_{n_i}^+}$ is the orthonormal basis of the full Fock space $F^2(H_{n_i})$.
    As in \cite{Po-domains-models}, we associate with the noncommutative domain
$$
\cD_{n_i}^{m_i}(\cH):=\{ X_i\in B(\cH)^{n_i}: \  (id-\Phi_{f_iX_i})^{p}(I)\geq 0 \text{ for }  1\leq p\leq m_i\}.
$$
The {\it weighted left creation  operators} $W_{i,j}:F^2(H_{n_i})\to
F^2(H_{n_i})$  are defined
by $W_{i,j}:=S_{i,j}D_{ij}$, where
 $S_{i,1},\ldots, S_{i,n_i}$ are the left creation operators on the full
 Fock space $F^2(H_{n_i})$, i.e.
      $$
       S_{i,j} \varphi:=e_j^i \otimes \varphi, \qquad  \varphi\in F^2(H_{n_i}).
      $$
      If $\beta=g_{j_1}^i\cdots g_{j_p}^i\in\FF_{n_i}^+$, we set $W_{i,\beta}:=W_{i,j_1}\cdots W_{i,j_p}$, and  $W_{i,g_0^i}:=I$.
       A simple calculation reveals that
\begin{equation*}
W_{i,\beta} e^i_\alpha= \frac {\sqrt{b_{i,\alpha}^{(m_i)}}}{\sqrt{b_{i,\beta
\alpha}^{(m_i)}}} e^i_{\beta \alpha} \quad \text{ and }\quad W_{i,\beta}^*
e^i_\alpha =\begin{cases} \frac
{\sqrt{b_{i,\gamma}^{(m_i)}}}{\sqrt{b_{i,\alpha}^{(m_i)}}}e^i_\gamma& \text{ if
}
\alpha=\beta\gamma \\
0& \text{ otherwise }
\end{cases}
\end{equation*}
 for any $\alpha, \beta \in \FF_{n_i}^+$.
  For each $i\in \{1,\ldots, k\}$ and $j\in \{1,\ldots, n_i\}$, we
define the operator ${\bf W}_{i,j}$ acting on the tensor Hilbert space
$F^2(H_{n_1})\otimes\cdots\otimes F^2(H_{n_k})$ by setting
$${\bf W}_{i,j}:=\underbrace{I\otimes\cdots\otimes I}_{\text{${i-1}$
times}}\otimes W_{i,j}\otimes \underbrace{I\otimes\cdots\otimes
I}_{\text{${k-i}$ times}}.
$$
 According to \cite{Po-Berezin1}, if  ${\bf W}_i:=({\bf W}_{i,1},\ldots,{\bf W}_{i,n_i})$, then
 $$
 (id-\Phi_{f_1,{\bf W}_1})^{m_1}\circ\cdots \circ (id-\Phi_{f_k,{\bf W}_k})^{m_k}(I)={\bf P}_\CC,
 $$
  where ${\bf P}_\CC$ is the
 orthogonal projection from $\otimes_{i=1}^k F^2(H_{n_i})$ onto $\CC 1\subset \otimes_{i=1}^k F^2(H_{n_i})$, where $\CC 1$ is identified with $\CC 1\otimes\cdots \otimes \CC 1$.
     Moreover,  ${\bf W}:=({\bf W}_1,\ldots, {\bf W}_k)$ is  a pure $k$-tuple, i.e. $\Phi_{f_iW_i}^p(I)\to 0$ strongly as $p\to \infty$,  in the
noncommutative polydomain $ {\bf D_f^m}(\otimes_{i=1}^kF^2(H_{n_i}))$.
The $k$-tuple   ${\bf W}:=({\bf W}_1,\ldots, {\bf W}_k)$
 plays the role of the {\it left universal model for the  noncommutative polydomain}
${\bf D_f^m}$.
More on  noncommutative polydomains, universal models, noncommutative Berezin transforms  and their applications can be found in \cite{Po-poisson}, \cite{Po-domains-models},  \cite{Po-domains}, \cite{Po-Berezin2}, and  \cite{Po-Berezin1}.

Here is an alternative description of the multi-Toeplitz operators associated with the polydomain ${\bf D_f^m}$.
For each $i\in \{1,\ldots, k\}$, let $F_{n_i,m_i}^2$ be the Hilbert space of formal power series in noncommutative indeterminates $Z_{i,1},\ldots, Z_{i,n_i}$ with complete orthogonal basis $\{Z_{i,\alpha}: \ \alpha \in \FF_{n_i}^+\}$  with the property  that $\|Z_{i,\alpha}\|_{i,m_i}:=\frac{1}{\sqrt{b_{i,\alpha}^{(m_i)}}}$.  It is clear that
$$
F_{n_i,m_i}^2=\left\{ \varphi:=\sum_{\alpha\in \FF_{n_i}^+} a_\alpha Z_{i,\alpha}: \ a_\alpha\in \CC \ \text{\rm and }\  \|\varphi\|_{i,m_i}^2:=
\sum_{\alpha\in \FF_{n_i}^+} \frac{1}{b_{i,\alpha}^{(m_i)}} |a_\alpha|^2<\infty\right\}.
$$
Note that $F_{n_i,m_i}^2$
  can be seen as a weighted Fock space with $n_i$ generators.
The left multiplication operators $L_{i,1} ,\ldots, L_{i,n_i} $ are defined by
$L_{i ,j}\xi:=Z_{i\,j}\xi$ \, for all $\xi\in F^2_{i,m_i}$.
 For each $i\in \{1,\ldots, k\}$ and $j\in \{1,\ldots, n_i\}$, we
define the operator ${\bf L}_{i,j}$ acting on the tensor Hilbert space
$F^2_{n_1,m_1}\otimes\cdots\otimes F^2_{n_k,m_k}$ by setting
$${\bf L}_{i,j}:=\underbrace{I\otimes\cdots\otimes I}_{\text{${i-1}$
times}}\otimes L_{i,j}\otimes \underbrace{I\otimes\cdots\otimes
I}_{\text{${k-i}$ times}}.
$$

 We remark  that
$U_{i,m_i}:F^2(H_{n_i})\to F^2_{n_i,m_i}$  defined by
$
U_{i,m_i}(e^i_\alpha):=\sqrt{b_\alpha^{(m_i)}} Z_{i,\alpha}$, $ \alpha\in \FF_{n_i}^+,
$
is  a unitary   operator and
   $U_{i,m_i}W_{i,j}=L_{i,j} U_{i,m_i}$  for any $ j\in \{1,\ldots, n_i\}.
$
Now, it is easy to see that  the operator  ${\bf U}:=U_{1,m_1}\otimes\cdots \otimes U_{k,m_k}:\otimes_{i=1}^k F^2(H_{n_i})\to\otimes_{i=1}^k F^2_{n_i,m_i}$
is unitary and
   ${\bf U}{\bf W}_{i,j}={\bf L}_{i,j} {\bf U}$  for any  $i\in \{1,\ldots, k\}$ and $ j\in \{1,\ldots, n_i\}.
$
A straightforward calculation shows  that $T\in B(\cK\bigotimes  \otimes_{i=1}^k F^2(H_{n_i}))$  is  a  weighted multi-Toeplitz operator if and only if  there exist operators $\{A_{(\boldsymbol \sigma, \boldsymbol \beta)}\}_{(\boldsymbol \sigma, \boldsymbol \beta)\in \boldsymbol\cJ}\subset B(\cK)$ such that the operator $T':=(I\otimes {\bf U})T(I\otimes {\bf U}^*)$   satisfies the relation
$$
\left<T'(x\otimes {\bf Z}_{\boldsymbol\gamma }), y\otimes {\bf Z}_{\boldsymbol\omega} \right>
=\begin{cases}
\mu_{(\boldsymbol\omega,\boldsymbol\gamma)}\left<A_{{\bf s}(\boldsymbol \omega, \boldsymbol\gamma)}x,y\right>, & \text{ if  } (\boldsymbol \omega, \boldsymbol\gamma)\in \boldsymbol\cC,\\
0,&  \text{ if  } (\boldsymbol \omega, \boldsymbol\gamma)\in ({\bf F}_{\bf n}^+\times {\bf F}_{\bf n}^+)\backslash \boldsymbol\cC,

\end{cases}
$$
for any $ \boldsymbol\omega, \boldsymbol\gamma \in {\bf F}_{\bf n}^+$,
where the weights $ \{\mu_{(\boldsymbol\omega,\boldsymbol\gamma)}\}_{(\boldsymbol \omega, \boldsymbol\gamma)\in \boldsymbol\cC}$ are given by
$$ \mu_{(\boldsymbol\omega,\boldsymbol\gamma)}:=
 \prod_{i=1}^k  \frac{1}{b^{(m_i)}_{i,\max\{\omega_i, \gamma_i\}}}.
 $$
We remark that all the results of  the present paper can be written in the setting of multi-Toeplitz operators on tensor products of weighted Fock spaces. Let us  mention a few particular cases which show that our definition of multi-Toeplitz operators seems to be natural for our setting.

 In the particular case when $k=1$ and $n_1=1$, the space $F^2_{1,m_1}$  coincides with the weighted Bergman space $A_{m_1}(\DD)$. As seen in \cite{Po-Toeplitz-poly-hyperball},  $T'$ is a  Toeplitz operator with operator-valued bounded harmonic symbol on $\DD$.     In the scalar case when $\cK=\CC$,  this class of operators was studied  by Louhichi and Olofsson   in  \cite{LO}.

 When $n_i=m_i=1$ for $i\in\{1,\ldots, k\}$, the tensor product  $F^2_{1,1}\otimes \cdots \otimes F^2_{1,1}$ is identified with the Hardy space $H^2(\DD^k)$.
 In this case, $T'$ is a multi-Toeplitz operator if and only if $T'=P_{H^2(\DD^k)} M_\varphi |_{H^2(\DD^k)}$ for some
 $\varphi\in L^\infty(\TT^k)$.  We should mention  that  a Brown-Halmos type characterization of Toeplitz operators on $H^2(\DD^k)$ was  recently obtained in \cite{MSS}.

 In the particular  case when $n_i=1$  for $i\in \{1,\ldots, k\}$, ${\bf m}=(m_1,\ldots, m_k)\in \NN^k$,  and  $f_i=\sum_{p=1}^{N_i} a_{i,p} z_i^p$ is a  regular polynomial in $z_i$, the tensor product $F^2_{1,m_1}\otimes \cdots \otimes F^2_{1,m_k}$ is identified with the  reproducing kernel Hilbert space with    reproducing kernel
 $$
 \kappa_{\bf m}(z,w):=\prod_{i=1}^k \frac{1}{\left(1-\sum_{p=1}^{N_i} a_{i,p} \bar z_i^pw_i^p\right)^{m_i}}, \qquad z=(z_1,\ldots, z_k), w=(w_1,\ldots, w_k) \in \DD^k.
 $$
  All the   results  of the present paper  hold,  in particular,  for these reproducing kernel Hilbert spaces, which
 include the  Hardy space, the Bergman space, and the weighted Bergman space  over the polydisk.

In what follows,  we set
${\bf W}_{\boldsymbol \alpha}:= {\bf W}_{1,\alpha_1}\cdots {\bf W}_{k,\alpha_k}$, if $\boldsymbol\alpha=(\alpha_1,\ldots, \alpha_2)\in {\bf F}_{\bf n}^+:=\FF_{n_1}^+\times\cdots\times \FF_{n_k}^+$. We recall from \cite{Po-Toeplitz-poly-hyperball}  the following straightforward technical result which follows right away using the definition of the universal model ${\bf W}$.
   \begin{lemma}
 \label{monom1}
  If  $\boldsymbol\gamma \in {\bf F}_{\bf n }^+$,   then the family $\{ {\bf W}_{\boldsymbol \alpha}{\bf W}_{\boldsymbol \beta}^*e_{\boldsymbol \gamma}\}_{(\boldsymbol \alpha, \boldsymbol \beta)\in \boldsymbol\cJ}$ consists of pairwise orthogonal vectors.
  Moreover, if  $ (\boldsymbol \alpha,\boldsymbol \beta)\in \boldsymbol\cJ$ and $\boldsymbol\omega, \boldsymbol\gamma \in {\bf F}_{\bf n }^+$, then
  $$
  \left<{\bf W}_{\boldsymbol \alpha}{\bf W}_{\boldsymbol \beta}^*e_{\boldsymbol \gamma}, e_{\boldsymbol\omega}\right>\neq 0
  $$
  if and only if  $(\boldsymbol \omega, \boldsymbol\gamma)\in \boldsymbol\cC$ and
  ${\bf s}(\boldsymbol \omega, \boldsymbol\gamma)=(\boldsymbol\alpha,\boldsymbol\beta)$.
  \end{lemma}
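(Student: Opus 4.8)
The plan is to exploit the tensor-product structure of the universal model and then read everything off from the explicit action of the weighted creation operators recalled above. Since each ${\bf W}_{i,j}$ acts only on the $i$-th tensor factor, for $\boldsymbol\alpha=(\alpha_1,\ldots,\alpha_k)$ and $\boldsymbol\beta=(\beta_1,\ldots,\beta_k)$ one has ${\bf W}_{\boldsymbol\alpha}{\bf W}_{\boldsymbol\beta}^*=(W_{1,\alpha_1}W_{1,\beta_1}^*)\otimes\cdots\otimes(W_{k,\alpha_k}W_{k,\beta_k}^*)$, so that
$$
{\bf W}_{\boldsymbol\alpha}{\bf W}_{\boldsymbol\beta}^*e_{\boldsymbol\gamma}=\bigotimes_{i=1}^k W_{i,\alpha_i}W_{i,\beta_i}^*e^i_{\gamma_i}.
$$
First I would analyze a single factor. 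By the formula for $W_{i,\beta_i}^*$, the vector $W_{i,\beta_i}^*e^i_{\gamma_i}$ is $0$ unless $\gamma_i\geq_r\beta_i$, in which case it is a strictly positive multiple of $e^i_{\gamma_i\backslash_r\beta_i}$ (the coefficients $b^{(m_i)}_{i,\alpha}$ being strictly positive, as $a_{i,g^i_j}>0$ for all $j$). Applying $W_{i,\alpha_i}$ then yields a strictly positive multiple of $e^i_{\alpha_i(\gamma_i\backslash_r\beta_i)}$. Consequently ${\bf W}_{\boldsymbol\alpha}{\bf W}_{\boldsymbol\beta}^*e_{\boldsymbol\gamma}$ is either $0$ (precisely when $\gamma_i$ fails to begin with $\beta_i$ for some $i$) or a nonzero scalar multiple of the single basis vector $e_{\boldsymbol\omega}$, where $\omega_i:=\alpha_i(\gamma_i\backslash_r\beta_i)$.

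Next I would match this $\boldsymbol\omega$ with the simplification function. Because $(\boldsymbol\alpha,\boldsymbol\beta)\in\boldsymbol\cJ$, for each $i$ at least one of $\alpha_i,\beta_i$ equals $g^i_0$, since $s_i^+$ and $s_i^-$ cannot both be positive. When $\gamma_i\geq_r\beta_i$, distinguishing the cases $\beta_i=g^i_0$ (then $\omega_i=\alpha_i\gamma_i\geq_r\gamma_i$ and $\omega_i\backslash_r\gamma_i=\alpha_i$), $\alpha_i=g^i_0$ with $|\beta_i|\geq1$ (then $\gamma_i=\beta_i\omega_i>_r\omega_i$ and $\gamma_i\backslash_r\omega_i=\beta_i$), and $\alpha_i=\beta_i=g^i_0$ (then $\omega_i=\gamma_i$), one sees in every case that $\omega_i$ and $\gamma_i$ are comparable and that the $i$-th coordinate of ${\bf s}(\boldsymbol\omega,\boldsymbol\gamma)$ is $(\alpha_i,\beta_i)$. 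Hence $\left<{\bf W}_{\boldsymbol\alpha}{\bf W}_{\boldsymbol\beta}^*e_{\boldsymbol\gamma},e_{\boldsymbol\omega}\right>\neq0$ forces $(\boldsymbol\omega,\boldsymbol\gamma)\in\boldsymbol\cC$ and ${\bf s}(\boldsymbol\omega,\boldsymbol\gamma)=(\boldsymbol\alpha,\boldsymbol\beta)$. Conversely, if $(\boldsymbol\omega,\boldsymbol\gamma)\in\boldsymbol\cC$ with ${\bf s}(\boldsymbol\omega,\boldsymbol\gamma)=(\boldsymbol\alpha,\boldsymbol\beta)$, the same case analysis run backwards shows $\gamma_i\geq_r\beta_i$ and $\alpha_i(\gamma_i\backslash_r\beta_i)=\omega_i$ for every $i$, so ${\bf W}_{\boldsymbol\alpha}{\bf W}_{\boldsymbol\beta}^*e_{\boldsymbol\gamma}$ is a nonzero multiple of $e_{\boldsymbol\omega}$ and the inner product does not vanish. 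This establishes the stated equivalence.

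Finally, for pairwise orthogonality I would fix $\boldsymbol\gamma$ and take distinct $(\boldsymbol\alpha,\boldsymbol\beta),(\boldsymbol\alpha',\boldsymbol\beta')\in\boldsymbol\cJ$. If either of ${\bf W}_{\boldsymbol\alpha}{\bf W}_{\boldsymbol\beta}^*e_{\boldsymbol\gamma}$, ${\bf W}_{\boldsymbol\alpha'}{\bf W}_{\boldsymbol\beta'}^*e_{\boldsymbol\gamma}$ is zero there is nothing to prove; otherwise, by the first step each is a nonzero multiple of a single basis vector $e_{\boldsymbol\omega}$, resp.\ $e_{\boldsymbol\omega'}$, and by the second step ${\bf s}(\boldsymbol\omega,\boldsymbol\gamma)=(\boldsymbol\alpha,\boldsymbol\beta)$ and ${\bf s}(\boldsymbol\omega',\boldsymbol\gamma)=(\boldsymbol\alpha',\boldsymbol\beta')$. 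Were $\boldsymbol\omega=\boldsymbol\omega'$, these simplifications would coincide, contradicting $(\boldsymbol\alpha,\boldsymbol\beta)\neq(\boldsymbol\alpha',\boldsymbol\beta')$; hence $\boldsymbol\omega\neq\boldsymbol\omega'$ and the two vectors are orthogonal. The only real work is the bookkeeping in the middle step: aligning the three cases above with the two-branch definition of ${\bf s}$ and the relations $\geq_r$, $>_r$, $\backslash_r$. There is no analytic content — the whole lemma is a finite combinatorial computation on the orthonormal basis of $\otimes_{s=1}^k F^2(H_{n_s})$ — so I expect no genuine obstacle beyond this bookkeeping.
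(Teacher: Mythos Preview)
Your approach is correct and is precisely what the paper indicates: it records the lemma as a ``straightforward technical result which follows right away using the definition of the universal model ${\bf W}$'' and refers to \cite{Po-Toeplitz-poly-hyperball} rather than giving a proof; your direct factor-by-factor computation on the orthonormal basis is exactly that.

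One notational slip worth fixing: the condition $W_{i,\beta_i}^*e^i_{\gamma_i}\neq 0$ is $\gamma_i=\beta_i\delta_i$ for some $\delta_i$ (left prefix), whereas in the paper's conventions $\gamma_i\geq_r\beta_i$ means $\gamma_i=\sigma\beta_i$ (right suffix) and $\gamma_i\backslash_r\beta_i=\sigma$. Your subsequent case analysis uses the correct relations (for instance you write $\gamma_i=\beta_i\omega_i$, not $\omega_i\beta_i$), so the argument is sound and only the symbols $\geq_r$ and $\backslash_r$ in your first paragraph should be replaced by the appropriate left-prefix description.
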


If  ${\bf s}:=(s_1,\ldots, s_k)\in \ZZ^k$, we use the notation
$$
 \boldsymbol\cJ_{\bf s}:= \left\{ (\boldsymbol \alpha, \boldsymbol \beta):=(\alpha_1,\ldots,\alpha_k,\beta_1,\ldots, \beta_k): \   \alpha_i,\beta_i\in \FF_{n_i}^+  \text{ with } |\alpha_i|=s_i^+,   |\beta_i|=s_i^-\right\}.
 $$
Note that $\boldsymbol\cJ=\cup_{{\bf s}\in \ZZ^k} \boldsymbol\cJ_{\bf s}$.
 We remark that the  operator
$$
 q_{\bf s }({\bf W}, {\bf W}^*):=
\sum_{ (\boldsymbol \alpha, \boldsymbol \beta)\in \boldsymbol\cJ_{\bf s}} A_{  (\boldsymbol \alpha, \boldsymbol \beta) }\otimes {\bf W}_{\boldsymbol \alpha}{\bf W}_{\boldsymbol \beta}^*,
$$
where  $A_{ (\boldsymbol \alpha, \boldsymbol \beta)}\in B(\cK)$   is a weighted  multi-Toeplitz operator and also a multi-homogeneous  operator of degree ${\bf s}$. This fact can be checked easily using the definition of the universal model ${\bf W}$. The next result shows that any weighted multi-Toeplitz operator
which is multi-homogeneous  operator of degree ${\bf s}\in \ZZ^k$ has this form.

\begin{theorem} \label{multi-homo}
 Let ${\bf s}:=(s_1,\ldots, s_k)\in \ZZ^k$ and let
  $A\in B(\cK\bigotimes \otimes_{s=1}^k  F^2(H_{n_s}))$ be a  multi-homogeneous  operator of degree  ${\bf s}$.
If $A$ is a weighted multi-Toeplitz operator, then
 $$
 Af=q_{\bf s}({\bf W}, {\bf W}^*) f,\qquad f\in \cK\bigotimes \otimes_{s=1}^k  F^2(H_{n_s}),
 $$
 where
 $$
 q_{\bf s}({\bf W}, {\bf W}^*):=
\sum_{ (\boldsymbol \alpha, \boldsymbol \beta)\in \boldsymbol\cJ_{\bf s}} C_{ (\boldsymbol \alpha, \boldsymbol \beta)}\otimes {\bf W}_{\boldsymbol \alpha}{\bf W}_{\boldsymbol \beta}^*,
$$
and  the coefficients $C_{ (\boldsymbol \alpha, \boldsymbol \beta)}\in B(\cK)$ are given by
\begin{equation*}
\left<C_{(\alpha_1,\ldots,\alpha_k,\beta_1,\ldots, \beta_k)}h,\ell\right>:=\left(\prod_{i=1}^k \sqrt{b_{i,\alpha_i}^{(m_i)} b_{i,\beta_i}^{(m_i)}}\right)\left<A(h\otimes x), \ell\otimes y\right>,\qquad  h,\ell\in \cK,
\end{equation*}
where $x:=x_1\otimes \cdots \otimes x_k$, $y=y_1\otimes \cdots \otimes y_k$ with
\begin{equation*}
\begin{cases} x_i=e^i_{\beta_i} \text{ and } y_i=1,& \quad \text{if } s_i\leq 0\\
 x_i=1 \text{ and } y_i=e^i_{\alpha_i},& \quad \text{if } s_i>0
 \end{cases}
 \end{equation*}
 for every $i\in \{1,\ldots, k\}$.
\end{theorem}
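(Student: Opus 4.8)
The plan is to show that $A$ and the formal expression $q_{\bf s}({\bf W},{\bf W}^*)$ have the same matrix with respect to the total orthonormal system $\{x\otimes e_{\boldsymbol\gamma}:x\in\cK,\ \boldsymbol\gamma\in{\bf F}_{\bf n}^+\}$. First I would note that, for a fixed $\boldsymbol\gamma$, one has ${\bf W}_{\boldsymbol\alpha}{\bf W}_{\boldsymbol\beta}^*e_{\boldsymbol\gamma}\ne 0$ only when each $\beta_i$ is an initial segment of $\gamma_i$ while, on $\boldsymbol\cJ_{\bf s}$, $|\alpha_i|=s_i^+$ is fixed; hence only finitely many terms of the sum defining $q_{\bf s}({\bf W},{\bf W}^*)$ act nontrivially on $x\otimes e_{\boldsymbol\gamma}$, so $q_{\bf s}({\bf W},{\bf W}^*)$ is a well-defined linear operator on the dense subspace $\cP$ spanned by such vectors. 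It is then enough to prove $Af=q_{\bf s}({\bf W},{\bf W}^*)f$ for $f\in\cP$, since $A$ is bounded; equivalently, to compare the inner products $\langle A(x\otimes e_{\boldsymbol\gamma}),y\otimes e_{\boldsymbol\omega}\rangle$ and $\langle q_{\bf s}({\bf W},{\bf W}^*)(x\otimes e_{\boldsymbol\gamma}),y\otimes e_{\boldsymbol\omega}\rangle$ for all $x,y\in\cK$ and $\boldsymbol\omega,\boldsymbol\gamma\in{\bf F}_{\bf n}^+$.

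Next I would identify the coefficients $C_{(\boldsymbol\alpha,\boldsymbol\beta)}$ with the operators $\{A_{(\boldsymbol\sigma,\boldsymbol\beta)}\}_{(\boldsymbol\sigma,\boldsymbol\beta)\in\boldsymbol\cJ}$ provided by the weighted multi-Toeplitz structure of $A$ (Definition \ref{MT}). For $(\boldsymbol\alpha,\boldsymbol\beta)\in\boldsymbol\cJ_{\bf s}$ we have, coordinatewise, $\alpha_i=g_0^i$ when $s_i\le 0$ and $\beta_i=g_0^i$ when $s_i>0$, so the test vectors $x,y$ appearing in the formula for $C_{(\boldsymbol\alpha,\boldsymbol\beta)}$ are precisely $x=e_{\boldsymbol\beta}$ and $y=e_{\boldsymbol\alpha}$. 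Since $(\boldsymbol\alpha,\boldsymbol\beta)\in\boldsymbol\cC$ with ${\bf s}(\boldsymbol\alpha,\boldsymbol\beta)=(\boldsymbol\alpha,\boldsymbol\beta)$ and $b_{i,g_0^i}^{(m_i)}=1$, one computes $\tau_{(\boldsymbol\alpha,\boldsymbol\beta)}=\prod_{i=1}^k\big(b_{i,\alpha_i}^{(m_i)}b_{i,\beta_i}^{(m_i)}\big)^{-1/2}$; applying Definition \ref{MT} to $\langle A(x\otimes e_{\boldsymbol\beta}),y\otimes e_{\boldsymbol\alpha}\rangle$ and substituting into the defining formula for $C_{(\boldsymbol\alpha,\boldsymbol\beta)}$, the weight factor cancels and I obtain $C_{(\boldsymbol\alpha,\boldsymbol\beta)}=A_{(\boldsymbol\alpha,\boldsymbol\beta)}$.

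Then I would finish by a case analysis in $(\boldsymbol\omega,\boldsymbol\gamma)$. If $(\boldsymbol\omega,\boldsymbol\gamma)\notin\boldsymbol\cC$, both inner products vanish, by Definition \ref{MT} and by Lemma \ref{monom1} respectively. If $(\boldsymbol\omega,\boldsymbol\gamma)\in\boldsymbol\cC$ but $|\omega_i|-|\gamma_i|\ne s_i$ for some $i$ (equivalently ${\bf s}(\boldsymbol\omega,\boldsymbol\gamma)\notin\boldsymbol\cJ_{\bf s}$), the left side vanishes because $A$ is multi-homogeneous of degree ${\bf s}$, hence maps $\cK\otimes\cE_{|\gamma_1|,\ldots,|\gamma_k|}$ into $\cK\otimes\cE_{s_1+|\gamma_1|,\ldots,s_k+|\gamma_k|}$, which is orthogonal to $\cK\otimes\cE_{|\omega_1|,\ldots,|\omega_k|}$; and the right side vanishes by Lemma \ref{monom1}, since the only index that could survive, ${\bf s}(\boldsymbol\omega,\boldsymbol\gamma)$, is not in $\boldsymbol\cJ_{\bf s}$. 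In the remaining case, set $(\boldsymbol\alpha_0,\boldsymbol\beta_0):={\bf s}(\boldsymbol\omega,\boldsymbol\gamma)\in\boldsymbol\cJ_{\bf s}$; by Lemma \ref{monom1} the sum defining $q_{\bf s}({\bf W},{\bf W}^*)$ reduces to the single term indexed by $(\boldsymbol\alpha_0,\boldsymbol\beta_0)$, and a direct coordinatewise computation using the action formulas for $W_{i,\beta}e_\alpha^i$ and $W_{i,\beta}^*e_\alpha^i$ (splitting into $\omega_i\ge_r\gamma_i$ and $\gamma_i>_r\omega_i$) gives
$$\langle{\bf W}_{\boldsymbol\alpha_0}{\bf W}_{\boldsymbol\beta_0}^*e_{\boldsymbol\gamma},e_{\boldsymbol\omega}\rangle=\prod_{i=1}^k\sqrt{\frac{b^{(m_i)}_{i,\min\{\omega_i,\gamma_i\}}}{b^{(m_i)}_{i,\max\{\omega_i,\gamma_i\}}}}=\tau_{(\boldsymbol\omega,\boldsymbol\gamma)}.$$
Together with $C_{(\boldsymbol\alpha_0,\boldsymbol\beta_0)}=A_{(\boldsymbol\alpha_0,\boldsymbol\beta_0)}$ and Definition \ref{MT}, this shows the right-hand entry equals $\tau_{(\boldsymbol\omega,\boldsymbol\gamma)}\langle A_{(\boldsymbol\alpha_0,\boldsymbol\beta_0)}x,y\rangle=\langle A(x\otimes e_{\boldsymbol\gamma}),y\otimes e_{\boldsymbol\omega}\rangle$, which completes the argument after extending from $\cP$ by continuity.

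The conceptual steps — multi-homogeneity annihilates all off-degree matrix entries, and Lemma \ref{monom1} makes the symbol sum locally finite with at most one surviving term — are routine; the part requiring care will be the two weight bookkeepings, namely the cancellation yielding $C_{(\boldsymbol\alpha,\boldsymbol\beta)}=A_{(\boldsymbol\alpha,\boldsymbol\beta)}$ and the coordinatewise evaluation of $\langle{\bf W}_{\boldsymbol\alpha_0}{\bf W}_{\boldsymbol\beta_0}^*e_{\boldsymbol\gamma},e_{\boldsymbol\omega}\rangle$ matching the weight $\tau_{(\boldsymbol\omega,\boldsymbol\gamma)}$ of Definition \ref{MT}.
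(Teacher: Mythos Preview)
Your proof is correct and takes a more direct route than the paper's. The paper first establishes, for an \emph{arbitrary} weighted multi-Toeplitz operator $T$, that the full formal Fourier series $\varphi_T({\bf W},{\bf W}^*)=\sum_{(\boldsymbol\alpha,\boldsymbol\beta)\in\boldsymbol\cJ}A_{(\boldsymbol\alpha,\boldsymbol\beta)}\otimes{\bf W}_{\boldsymbol\alpha}{\bf W}_{\boldsymbol\beta}^*$ converges on $\cP_\cK$ and agrees with $T$ there (this requires a separate convergence argument via Parseval and a WOT-convergence estimate), and only afterwards uses the orthogonal decomposition $\otimes_s F^2(H_{n_s})=\bigoplus_{\bf p}\cE_{\bf p}$ together with multi-homogeneity to isolate the single degree-${\bf s}$ piece $q_{\bf s}$. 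You instead exploit multi-homogeneity from the outset: since only the ${\bf s}$-graded piece can survive, you never need to discuss the infinite Fourier series at all, and the finiteness of $q_{\bf s}({\bf W},{\bf W}^*)$ on each $x\otimes e_{\boldsymbol\gamma}$ makes convergence trivial. The trade-off is that the paper's longer argument simultaneously yields the general Fourier-representation result recorded as Corollary~\ref{formal2}, whereas your argument proves only the theorem as stated; but for the theorem itself your approach is cleaner.
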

\begin{proof} First, we prove that  any weighted multi-Toeplitz operator  $T\in B(\cK\bigotimes \otimes_{s=1}^k  F^2(H_{n_s}))$ has
a unique formal Fourier representation
$$
\varphi_T({\bf W}, {\bf W}^*):=\sum_{(\boldsymbol \alpha, \boldsymbol \beta)\in \boldsymbol \cJ}A_{   (\boldsymbol \alpha, \boldsymbol \beta)}\otimes {\bf W}_{\boldsymbol\alpha}{\bf W}_{\boldsymbol\beta}^*,
$$
where $\{A_{   (\boldsymbol \alpha, \boldsymbol \beta)}\}_{(\boldsymbol \alpha, \boldsymbol \beta)\in \boldsymbol \cJ}$ are some operators on the Hilbert space $\cK$,  such that
$$
T\zeta=\varphi_T({\bf W}, {\bf W}^*)\zeta,\qquad \zeta\in \cP_K,
$$
where $\cP_\cK$ is the linear span of all vectors of the form
$h\otimes e^1_{\alpha_1}\otimes\cdots\otimes e_{\alpha_k}^k$, where $h\in \cK$, $\alpha_i\in \FF_{n_i}^+$.
Due to Definition \ref{MT},  there exist operators $\{A_{(\boldsymbol \alpha, \boldsymbol \beta)}\}_{(\boldsymbol \alpha, \boldsymbol \beta)\in \boldsymbol \cJ}\subset B(\cK)$,    such that
\begin{equation*}
 \left<A_{(\boldsymbol \alpha, \boldsymbol \beta)}h,\ell\right>=\frac{1}{\tau_{(\boldsymbol \alpha, \boldsymbol \beta)}}\left<T(h\otimes e_{\boldsymbol\beta}), \ell\otimes e_{\boldsymbol\alpha} \right>,\qquad h,\ell\in \cK.
 \end{equation*}
 A closer look at the later relation, reveals that the coefficients
 $A_{(\boldsymbol \alpha, \boldsymbol \beta)}$ are satisfying the relation
 \begin{equation*}
\left<A_{(\alpha_1,\ldots,\alpha_k,\beta_1,\ldots, \beta_k)}h,\ell\right>=\left(\prod_{i=1}^k \sqrt{b_{i,\alpha_i}^{(m_i)} b_{i,\beta_i}^{(m_i)}}\right)\left<T(h\otimes x), \ell\otimes y\right>,\qquad  h,\ell\in \cK,
\end{equation*}
where $x:=x_1\otimes \cdots \otimes x_k$, $y=y_1\otimes \cdots \otimes y_k$ with
\begin{equation*}
\begin{cases} x_i=e^i_{\beta_i} \text{ and } y_i=1,& \quad \text{if } s_i\leq 0\\
 x_i=1 \text{ and } y_i=e^i_{\alpha_i},& \quad \text{if } s_i>0
 \end{cases}
 \end{equation*}
 for every $i\in \{1,\ldots, k\}$.
Consequently, if $\boldsymbol\gamma\in {\bf F}_{\bf n }^+$ and $h\in \cK$, we have
$$
T(h\otimes e_{\boldsymbol\gamma }) =\sum_{\boldsymbol\omega\in {\bf F}_{\bf n }^+: (\boldsymbol\omega, \boldsymbol\gamma)\in \boldsymbol \cC}
 \tau_{(\boldsymbol\omega,\boldsymbol\gamma)}A_{{\bf s}(\boldsymbol \omega, \boldsymbol\gamma)}h \otimes e_{\boldsymbol\omega }
$$
is a vector in $\cK\bigotimes \otimes_{i=1}^k F^2(H_{n_i})$.  This shows that the series
\begin{equation}
\label{con}
\sum_{\boldsymbol\omega\in {\bf F}_{\bf n }^+: (\boldsymbol\omega, \boldsymbol\gamma)\in\boldsymbol  \cC}
 \tau_{(\boldsymbol\omega,\boldsymbol\gamma)}^2A_{{\bf s}(\boldsymbol \omega, \boldsymbol\gamma)}^*A_{{\bf s}(\boldsymbol \omega, \boldsymbol\gamma)}\quad \text{ is WOT-convergent}.
\end{equation}

Consider the formal power series
$$
\varphi_T({\bf W}, {\bf W}^*):=\sum_{(\boldsymbol \alpha, \boldsymbol \beta)\in \boldsymbol \cJ}A_{   (\boldsymbol \alpha, \boldsymbol \beta)}\otimes {\bf W}_{\boldsymbol\alpha}{\bf W}_{\boldsymbol\beta}^*.
$$
In what follows, we show that
$$
\varphi_T({\bf W}, {\bf W}^*)(x\otimes e_{\boldsymbol \gamma}):=\sum_{(\boldsymbol \alpha, \boldsymbol \beta)\in \boldsymbol \cJ}A_{   (\boldsymbol \alpha, \boldsymbol \beta)}x\otimes {\bf W}_{\boldsymbol\alpha}{\bf W}_{\boldsymbol\beta}^*e_{\boldsymbol \gamma}
$$
is convergent for any $x\in \cK$ and $\boldsymbol\gamma\in {\bf F}_{\bf n }^+$. According  to Lemma \ref{monom1},
if  $\boldsymbol\omega, \boldsymbol\gamma \in {\bf F}_{\bf n }^+$, then
  $$
  \left<{\bf W}_{\boldsymbol\alpha}{\bf W}_{\boldsymbol\beta}^*e_{\boldsymbol \gamma}, e_{\boldsymbol\omega}\right>\neq 0
  $$
  if and only if  $(\boldsymbol \omega, \boldsymbol\gamma)\in \boldsymbol \cC$ and
  ${\bf s}(\boldsymbol \omega, \boldsymbol\gamma)=(\boldsymbol\alpha,\boldsymbol\beta)$. In this case, a straightforward  computation reveals that
  $\left<{\bf W}_{\boldsymbol\alpha}{\bf W}_{\boldsymbol\beta}^*e_{\boldsymbol \gamma}, e_{\boldsymbol\omega}\right>=\tau_{(\boldsymbol\omega, \boldsymbol\gamma)}$.
Due to   Parseval's identity, we  have
\begin{equation*}
\begin{split}
\|{\bf W}_{\boldsymbol\alpha}{\bf W}_{\boldsymbol\beta}^*e_{\boldsymbol \gamma}\|^2
&=\sum_{\omega\in {\bf F}_{\bf n }^+} \left|\left<{\bf W}_{\boldsymbol\alpha}{\bf W}_{\boldsymbol\beta}^*e_{\boldsymbol \gamma},e_{\boldsymbol\omega}\right>\right|^2 \\
&=
\sum_{{\omega\in {\bf F}_{\bf n }^+: (\boldsymbol \omega, \boldsymbol\gamma)\in \boldsymbol \cC}\atop{
  {\bf s}(\boldsymbol \omega, \boldsymbol\gamma)=(\boldsymbol\alpha,\boldsymbol\beta)}} \left|\left<{\bf W}_{\boldsymbol\alpha}{\bf W}_{\boldsymbol\beta}^*e_{\boldsymbol \gamma},e_{\boldsymbol\omega}\right>\right|^2
  =\sum_{{\omega\in {\bf F}_{\bf n }^+: (\boldsymbol \omega, \boldsymbol\gamma)\in \boldsymbol \cC}\atop{
  {\bf s}(\boldsymbol \omega, \boldsymbol\gamma)=(\boldsymbol\alpha,\boldsymbol\beta)}} \tau_{(\boldsymbol\omega, \boldsymbol\gamma)}^2.
\end{split}
\end{equation*}
Now, using again Lemma \ref{monom1}, we deduce that
\begin{equation*}
\begin{split}
\left\|\varphi_T({\bf W}, {\bf W}^*)(x\otimes e_{\boldsymbol \gamma})\right\|^2
&=
\sum_{{(\boldsymbol\alpha,\boldsymbol\beta)\in\boldsymbol  \cJ }}
\|A_{   (\boldsymbol \alpha, \boldsymbol \beta)}x\|^2\sum_{{\omega\in {\bf F}_{\bf n }^+: (\boldsymbol \omega, \boldsymbol\gamma)\in \boldsymbol \cC}\atop{
  {\bf s}(\boldsymbol \omega, \boldsymbol\gamma)=(\boldsymbol\alpha,\boldsymbol\beta)}} \tau_{(\boldsymbol\omega, \boldsymbol\gamma)}^2\\
  &=\sum_{{\omega\in {\bf F}_{\bf n }^+: (\boldsymbol \omega, \boldsymbol\gamma)\in\boldsymbol  \cC}}
  \|A_{{\bf s}(\boldsymbol \omega, \boldsymbol\gamma)}x\|^2\tau_{(\boldsymbol\omega, \boldsymbol\gamma)}^2.
  \end{split}
\end{equation*}
Note that the later expression is finite due to relation \eqref{con}.
Using  Definition  \ref{MT}  and the results above, we deduce that
\begin{equation*}
\begin{split}
\left<\varphi_T({\bf W}, {\bf W}^*)(x\otimes e_{\boldsymbol \gamma}, y\otimes e_{\boldsymbol\omega}
\right>
&=
\sum_{(\boldsymbol \alpha, \boldsymbol \beta)\in \boldsymbol \cJ}\left<A_{   (\boldsymbol \alpha, \boldsymbol \beta)}x,y\right>\left< {\bf W}_{\boldsymbol\alpha}{\bf W}_{\boldsymbol\beta}^*e_{\boldsymbol \gamma}, e_{\boldsymbol\omega}\right>\\
&=\begin{cases}
\tau_{(\boldsymbol\omega,\boldsymbol\gamma)}\left<A_{{\bf s}(\boldsymbol \omega, \boldsymbol\gamma)}x,y\right>, & \text{ if  } (\boldsymbol \omega, \boldsymbol\gamma)\in \boldsymbol \cC,\\
0,&  \text{ if  } (\boldsymbol \omega, \boldsymbol\gamma)\in ({\bf F}_{\bf n }^+\times {\bf F}_{\bf n }^+)\backslash \boldsymbol \cC,
\end{cases}
\\
&=
\left<T(x\otimes e_{\boldsymbol\gamma }), y\otimes e_{\boldsymbol\omega} \right>.
\end{split}
\end{equation*}
Consequently, we have  $\varphi_T({\bf W}, {\bf W}^*)(x\otimes e_{\boldsymbol \gamma})=T(x\otimes e_{\boldsymbol\gamma })$ for any $x\in \cK$ and $\boldsymbol\gamma \in {\bf F}_{\bf n }^+$.
It remains to prove  the uniqueness of the Fourier representation. To this end,  assume that
$
\varphi({\bf W}, {\bf W}^*):=\sum_{(\boldsymbol \alpha, \boldsymbol \beta)\in \boldsymbol \cJ}A_{   (\boldsymbol \alpha, \boldsymbol \beta)}'\otimes {\bf W}_{\boldsymbol\alpha}{\bf W}_{\boldsymbol\beta}^*
$
is a formal series such that $T\zeta=\varphi({\bf W}, {\bf W}^*)\zeta$ for any $\zeta\in \cK$.   On the other hand,  if $(\boldsymbol \alpha, \boldsymbol \beta)\in \boldsymbol \cJ$ then ${\bf s}(\boldsymbol \alpha, \boldsymbol \beta)=(\boldsymbol \alpha, \boldsymbol \beta)$. In this case, we have
$$
\left<\varphi_T({\bf W}, {\bf W}^*)(x\otimes e_{\boldsymbol \beta}, y\otimes e_{\boldsymbol\alpha}
\right>=\tau_{(\boldsymbol \alpha, \boldsymbol \beta))}\left<A_{(\boldsymbol \alpha, \boldsymbol \beta)}x,y\right>
$$
and
$$
\left<\varphi({\bf W}, {\bf W}^*)(x\otimes e_{\boldsymbol \beta}, y\otimes e_{\boldsymbol\alpha}
\right>=\tau_{(\boldsymbol \alpha, \boldsymbol \beta)}\left<A_{(\boldsymbol \alpha, \boldsymbol \beta)}'x,y\right>.
$$
Since $\varphi_T({\bf W}, {\bf W}^*)\zeta=\varphi({\bf W}, {\bf W}^*)\zeta$ and $\tau_{(\boldsymbol \alpha, \boldsymbol \beta)}\neq 0$, the relations above imply
$A_{(\boldsymbol \alpha, \boldsymbol \beta)}=A_{(\boldsymbol \alpha, \boldsymbol \beta)}'$  for any
 $(\boldsymbol \alpha, \boldsymbol \beta)\in \boldsymbol \cJ$, which proves our assertion.

 Now, assume that $A\in B(\cK\bigotimes \otimes_{s=1}^k  F^2(H_{n_s}))$  is a weighted  multi-Toeplitz operator which is also  multi-homogeneous  operator of degree  ${\bf s}=(s_1,\ldots, s_k)\in \ZZ^k$.
 Due to the first part of the proof, $A$ has a unique  formal Fourier representation
 $$
\varphi_A({\bf W}, {\bf W}^*):= \sum_{(\boldsymbol \alpha, \boldsymbol \beta)\in \boldsymbol \cJ}C_{   (\boldsymbol \alpha, \boldsymbol \beta)}\otimes {\bf W}_{\boldsymbol\alpha}{\bf W}_{\boldsymbol\beta}^*=\sum_{{\bf t}=(t_1,\ldots, t_k)\in \ZZ^k} q_{\bf t}({\bf W}, {\bf W}^*),
$$
where
 $$
 q_{\bf t}({\bf W}, {\bf W}^*):=
\sum_{(\boldsymbol \alpha, \boldsymbol \beta)\in \boldsymbol \cJ_{\bf t}} C_{(\boldsymbol \alpha, \boldsymbol \beta)\in \boldsymbol \cJ}\otimes {\bf W}_{\boldsymbol \alpha}{\bf W}_{\boldsymbol \beta}^*
$$
and the   coefficients $C_{(\boldsymbol \alpha, \boldsymbol \beta) }\in B(\cK)$ are defined in the theorem, and  such that
$$
A\zeta= \varphi_A({\bf W}, {\bf W}^*)\zeta, \qquad \zeta\in \cP_K.
$$
 If $\zeta\in \cK\otimes \cE_{p_1,\ldots, p_k}$, then  $q_{\bf t}({\bf W}, {\bf W}^*)\zeta\in \cK\otimes \cE_{t_1+p_1,\ldots, t_k+p_k}$ and, since  $A$ is a  multi-homogeneous  operator of degree  ${\bf s}$,  we have $A\zeta\in \cK\otimes \cE_{s_1+p_1,\ldots, s_k+p_k}$.  Consequently, since the subspaces
 $\{\cE_{p_1,\ldots, p_k}\}_{(p_1,\ldots, p_k)\in \ZZ^k}$ are orthogonal and
 $$
 A\zeta=\sum_{{\bf t}=(t_1,\ldots, t_k)\in \ZZ^k} q_{\bf t}({\bf W}, {\bf W}^*) \zeta,
$$
 we deduce that  $A\zeta=q_{\bf s}({\bf W}, {\bf W}^*) \zeta$ for any $\zeta\in  \cK\otimes \cE_{p_1,\ldots, p_k}$.
 Using the fact that
 $$
 \otimes_{s=1}^k F^2(H_{n_s})=\bigoplus_{(p_1,\ldots, p_k)\in \ZZ^k} \cE_{p_1,\ldots, p_k},
 $$
 we conclude that $A=q_{\bf s}({\bf W}, {\bf W}^*)$, which completes the proof.
 \end{proof}

We should record the following result that was proved in the proof of Theorem \ref{multi-homo}.

\begin{corollary}  \label{formal2} Any weighted multi-Toeplitz operator  $T\in B(\cK\bigotimes \otimes_{s=1}^k  F^2(H_{n_s}))$ has
a unique formal Fourier representation
$$
\varphi_T({\bf W}, {\bf W}^*):=\sum_{(\boldsymbol \alpha, \boldsymbol \beta)\in \boldsymbol \cJ}A_{   (\boldsymbol \alpha, \boldsymbol \beta)}\otimes {\bf W}_{\boldsymbol\alpha}{\bf W}_{\boldsymbol\beta}^*,
$$
where $\{A_{   (\boldsymbol \alpha, \boldsymbol \beta)}\}_{(\boldsymbol \alpha, \boldsymbol \beta)\in \boldsymbol \cJ}$ are some operators on the Hilbert space $\cK$,  such that
$$
T\zeta=\varphi_T({\bf W}, {\bf W}^*)\zeta,\qquad \zeta\in \cP_K.
$$
\end{corollary}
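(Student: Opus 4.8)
The statement is exactly what the first portion of the proof of Theorem~\ref{multi-homo} establishes, before the multi-homogeneity hypothesis is ever used, so the plan is simply to carry out that argument on its own. First I would start from a weighted multi-Toeplitz operator $T$ and use Definition~\ref{MT} to produce the candidate Fourier coefficients $\{A_{(\boldsymbol\alpha,\boldsymbol\beta)}\}_{(\boldsymbol\alpha,\boldsymbol\beta)\in\boldsymbol\cJ}\subset B(\cK)$ determined by $\langle A_{(\boldsymbol\alpha,\boldsymbol\beta)}h,\ell\rangle=\tau_{(\boldsymbol\alpha,\boldsymbol\beta)}^{-1}\langle T(h\otimes e_{\boldsymbol\beta}),\ell\otimes e_{\boldsymbol\alpha}\rangle$. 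Expanding $T(h\otimes e_{\boldsymbol\gamma})$ against the orthonormal basis $\{e_{\boldsymbol\omega}\}$ and invoking Definition~\ref{MT} gives $T(h\otimes e_{\boldsymbol\gamma})=\sum_{\boldsymbol\omega:(\boldsymbol\omega,\boldsymbol\gamma)\in\boldsymbol\cC}\tau_{(\boldsymbol\omega,\boldsymbol\gamma)}A_{{\bf s}(\boldsymbol\omega,\boldsymbol\gamma)}h\otimes e_{\boldsymbol\omega}$; since the left-hand side is an honest vector of $\cK\otimes\bigotimes_{i=1}^{k}F^2(H_{n_i})$, the sum $\sum_{\boldsymbol\omega}\tau_{(\boldsymbol\omega,\boldsymbol\gamma)}^{2}A_{{\bf s}(\boldsymbol\omega,\boldsymbol\gamma)}^{*}A_{{\bf s}(\boldsymbol\omega,\boldsymbol\gamma)}$ is WOT-convergent, which is relation~\eqref{con}.

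Next I would define the formal series $\varphi_T({\bf W},{\bf W}^*):=\sum_{(\boldsymbol\alpha,\boldsymbol\beta)\in\boldsymbol\cJ}A_{(\boldsymbol\alpha,\boldsymbol\beta)}\otimes{\bf W}_{\boldsymbol\alpha}{\bf W}_{\boldsymbol\beta}^{*}$ and verify that it acts legitimately on $\cP_\cK$. The essential input is Lemma~\ref{monom1}: for fixed $\boldsymbol\gamma$ the vectors $\{{\bf W}_{\boldsymbol\alpha}{\bf W}_{\boldsymbol\beta}^{*}e_{\boldsymbol\gamma}\}_{(\boldsymbol\alpha,\boldsymbol\beta)\in\boldsymbol\cJ}$ are pairwise orthogonal, and $\langle{\bf W}_{\boldsymbol\alpha}{\bf W}_{\boldsymbol\beta}^{*}e_{\boldsymbol\gamma},e_{\boldsymbol\omega}\rangle$ is nonzero precisely when $(\boldsymbol\omega,\boldsymbol\gamma)\in\boldsymbol\cC$ and ${\bf s}(\boldsymbol\omega,\boldsymbol\gamma)=(\boldsymbol\alpha,\boldsymbol\beta)$, in which case a short computation with the weights $b_{i,\cdot}^{(m_i)}$ identifies it with $\tau_{(\boldsymbol\omega,\boldsymbol\gamma)}$. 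Parseval's identity then gives $\|{\bf W}_{\boldsymbol\alpha}{\bf W}_{\boldsymbol\beta}^{*}e_{\boldsymbol\gamma}\|^{2}=\sum_{\boldsymbol\omega}\tau_{(\boldsymbol\omega,\boldsymbol\gamma)}^{2}$ over the relevant $\boldsymbol\omega$, so that $\|\varphi_T({\bf W},{\bf W}^*)(x\otimes e_{\boldsymbol\gamma})\|^{2}=\sum_{\boldsymbol\omega:(\boldsymbol\omega,\boldsymbol\gamma)\in\boldsymbol\cC}\|A_{{\bf s}(\boldsymbol\omega,\boldsymbol\gamma)}x\|^{2}\tau_{(\boldsymbol\omega,\boldsymbol\gamma)}^{2}$, which is finite by~\eqref{con}. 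A term-by-term comparison of $\langle\varphi_T({\bf W},{\bf W}^*)(x\otimes e_{\boldsymbol\gamma}),y\otimes e_{\boldsymbol\omega}\rangle$ with $\langle T(x\otimes e_{\boldsymbol\gamma}),y\otimes e_{\boldsymbol\omega}\rangle$, using Definition~\ref{MT} on both sides, then shows the two operators agree on $\cP_\cK$.

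Finally, for uniqueness I would suppose $\varphi({\bf W},{\bf W}^*)=\sum_{(\boldsymbol\alpha,\boldsymbol\beta)\in\boldsymbol\cJ}A'_{(\boldsymbol\alpha,\boldsymbol\beta)}\otimes{\bf W}_{\boldsymbol\alpha}{\bf W}_{\boldsymbol\beta}^{*}$ also represents $T$ on $\cP_\cK$, and test against the pair $(x\otimes e_{\boldsymbol\beta},\,y\otimes e_{\boldsymbol\alpha})$ with $(\boldsymbol\alpha,\boldsymbol\beta)\in\boldsymbol\cJ$; since then ${\bf s}(\boldsymbol\alpha,\boldsymbol\beta)=(\boldsymbol\alpha,\boldsymbol\beta)$ only the $(\boldsymbol\alpha,\boldsymbol\beta)$-term survives, yielding $\tau_{(\boldsymbol\alpha,\boldsymbol\beta)}\langle A_{(\boldsymbol\alpha,\boldsymbol\beta)}x,y\rangle=\tau_{(\boldsymbol\alpha,\boldsymbol\beta)}\langle A'_{(\boldsymbol\alpha,\boldsymbol\beta)}x,y\rangle$, and $\tau_{(\boldsymbol\alpha,\boldsymbol\beta)}\neq0$ forces $A_{(\boldsymbol\alpha,\boldsymbol\beta)}=A'_{(\boldsymbol\alpha,\boldsymbol\beta)}$. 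The one point that needs care is the convergence step: $\varphi_T({\bf W},{\bf W}^*)$ is genuinely only a formal series—its coefficients need not be jointly bounded and it need not define a bounded operator—so it is precisely the orthogonality from Lemma~\ref{monom1} together with the WOT-convergence~\eqref{con} that makes the action on $\cP_\cK$ well defined; the remainder is bookkeeping with the weights $\tau_{(\boldsymbol\omega,\boldsymbol\gamma)}$.
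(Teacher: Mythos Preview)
Your proposal is correct and follows essentially the same approach as the paper: the corollary is stated in the paper explicitly as a fact already established within the proof of Theorem~\ref{multi-homo}, and your outline reproduces that argument step by step---defining the coefficients via Definition~\ref{MT}, obtaining the WOT-convergence \eqref{con}, using Lemma~\ref{monom1} and Parseval to make $\varphi_T({\bf W},{\bf W}^*)$ act on $\cP_\cK$, matching inner products, and testing at $(\boldsymbol\alpha,\boldsymbol\beta)\in\boldsymbol\cJ$ for uniqueness. Your closing remark about why the formal series is only well defined on $\cP_\cK$ (via orthogonality and \eqref{con}) is exactly the right emphasis.
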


 The formal Fourier  series $\varphi_T({\bf W}, {\bf W}^*)$  associated with the weighted multi-Toeplitz operator $T$ can be viewed as  its  noncommutative symbol. In the next section, we provide necessary and sufficient conditions on a formal  series
 $\sum_{(\boldsymbol \alpha, \boldsymbol \beta)\in \boldsymbol \cJ}A_{   (\boldsymbol \alpha, \boldsymbol \beta)}\otimes {\bf W}_{\boldsymbol\alpha}{\bf W}_{\boldsymbol\beta}^*$  to be the Fourier series of a weighted multi-Toeplitz operator.

The last result of this section shows that for a large class of polydomains the associated weighted multi-Toeplitz operators contain no non-zero compact operators.

\begin{theorem} \label{compact}  Let    ${\bf D}_{\bf f}^{\bf m}$ be a noncommutative polydomain where the coefficients $b_{i,\alpha}^{(m_i)}$ associated to ${\bf f}$ satisfy the condition
$$
\sup_{\alpha\in \FF_{n_i}^+} \frac{b^{(m_i)}_{i,g_j^i\alpha }}{b^{(m_i)}_{i,\alpha}}<\infty,\qquad i\in \{1,\ldots, k\}, j\in \{1,\ldots, n_i\}.
$$
If $T\in B(\otimes_{s=1}^k  F^2(H_{n_s}))$ is a compact weighted multi-Toeplitz operator, then $T=0$.
\end{theorem}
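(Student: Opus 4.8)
The plan is to use the shift-type structure of the universal model together with the Brown-Halmos style matrix coefficients of $T$ to show that every diagonal block of $T$ tends to zero, which for a compact operator forces $T=0$. Concretely, I would first reduce to the scalar case $\cK=\CC$ (the statement is already stated that way) and look at the matrix entries $\langle T e_{\boldsymbol\gamma}, e_{\boldsymbol\omega}\rangle$ in the orthonormal basis $\{e_{\boldsymbol\gamma}\}_{\boldsymbol\gamma\in{\bf F}_{\bf n}^+}$. By Definition \ref{MT}, these entries depend only on the simplification ${\bf s}(\boldsymbol\omega,\boldsymbol\gamma)$ and on the weight $\tau_{(\boldsymbol\omega,\boldsymbol\gamma)}$. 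In particular the ``diagonal'' entries, where $\boldsymbol\omega=\boldsymbol\gamma$, are all equal: $\langle T e_{\boldsymbol\gamma}, e_{\boldsymbol\gamma}\rangle = \langle A_{(\mathbf g_0,\mathbf g_0)} \rangle$ for every $\boldsymbol\gamma$, since $\tau_{(\boldsymbol\gamma,\boldsymbol\gamma)}=\prod_i \sqrt{b^{(m_i)}_{i,\gamma_i}/b^{(m_i)}_{i,\gamma_i}}=1$ and ${\bf s}(\boldsymbol\gamma,\boldsymbol\gamma)=(\mathbf g_0,\mathbf g_0)$.

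The key step is then: a compact operator $T$ on $\otimes_{s=1}^k F^2(H_{n_s})$ satisfies $\langle T e_{\boldsymbol\gamma(p)}, e_{\boldsymbol\gamma(p)}\rangle \to 0$ along any sequence $\{e_{\boldsymbol\gamma(p)}\}$ that converges weakly to $0$; taking $\boldsymbol\gamma(p)$ with $|\gamma_1(p)|\to\infty$ gives such a sequence, so the common diagonal value must be $0$, i.e. $A_{(\mathbf g_0,\mathbf g_0)}=0$. To kill the off-diagonal coefficients $A_{(\boldsymbol\alpha,\boldsymbol\beta)}$ with $(\boldsymbol\alpha,\boldsymbol\beta)\in\boldsymbol\cJ$ fixed, I would run the same argument along the translated sequences $\boldsymbol\omega(p) := (\alpha_1\gamma_1(p),\ldots)$ and $\boldsymbol\gamma(p) := (\beta_1\gamma_1(p),\ldots)$ built so that ${\bf s}(\boldsymbol\omega(p),\boldsymbol\gamma(p))=(\boldsymbol\alpha,\boldsymbol\beta)$ is constant in $p$: the matrix entry is $\tau_{(\boldsymbol\omega(p),\boldsymbol\gamma(p))}\langle A_{(\boldsymbol\alpha,\boldsymbol\beta)}\rangle$, and compactness of $T$ forces $\langle T e_{\boldsymbol\gamma(p)}, e_{\boldsymbol\omega(p)}\rangle\to 0$ (both families being orthonormal and weakly null). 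This is exactly where the hypothesis $\sup_{\alpha}\, b^{(m_i)}_{i,g_j^i\alpha}/b^{(m_i)}_{i,\alpha}<\infty$ enters: it guarantees that the weights $\tau_{(\boldsymbol\omega(p),\boldsymbol\gamma(p))}$ stay bounded below by a positive constant as $p\to\infty$ (each ratio $b^{(m_i)}_{i,\min}/b^{(m_i)}_{i,\max}$ is a product of boundedly many reciprocals of the bounded quantities $b^{(m_i)}_{i,g_j^i\alpha}/b^{(m_i)}_{i,\alpha}$, with the number of factors equal to $|\alpha_i|$ or $|\beta_i|$, which is fixed). Hence $\langle A_{(\boldsymbol\alpha,\boldsymbol\beta)}\rangle=0$ for all $(\boldsymbol\alpha,\boldsymbol\beta)\in\boldsymbol\cJ$.

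Finally, once all coefficients $A_{(\boldsymbol\alpha,\boldsymbol\beta)}$ vanish, Corollary \ref{formal2} (the uniqueness of the Fourier representation) gives $T\zeta = \varphi_T({\bf W},{\bf W}^*)\zeta = 0$ for all $\zeta$ in the dense subspace $\cP_\CC$, and since $T$ is bounded, $T=0$. I would organize the write-up as: (1) compute the diagonal and translated matrix entries of $T$; (2) establish the uniform lower bound on the relevant $\tau$'s from the hypothesis; (3) invoke weak-null sequences plus compactness to conclude each $A_{(\boldsymbol\alpha,\boldsymbol\beta)}=0$; (4) conclude via Corollary \ref{formal2}. The main obstacle is step (2): one must check carefully that for the specific sequences $\boldsymbol\omega(p),\boldsymbol\gamma(p)$ chosen to fix the simplification, the weight $\tau_{(\boldsymbol\omega(p),\boldsymbol\gamma(p))}$ really is bounded away from $0$ uniformly in $p$ — the point being that $\min\{\omega_i(p),\gamma_i(p)\}$ and $\max\{\omega_i(p),\gamma_i(p)\}$ differ by a word of bounded length (namely $\alpha_i$ or $\beta_i$), so the ratio under the square root telescopes into at most $|\alpha_i|+|\beta_i|$ bounded factors, independent of $p$.
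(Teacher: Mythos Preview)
Your proposal is correct and follows essentially the same approach as the paper's proof: fix $(\boldsymbol\alpha,\boldsymbol\beta)\in\boldsymbol\cJ$, build sequences $\boldsymbol\omega(p),\boldsymbol\gamma(p)$ with constant simplification ${\bf s}(\boldsymbol\omega(p),\boldsymbol\gamma(p))=(\boldsymbol\alpha,\boldsymbol\beta)$, use the hypothesis (telescoped over the fixed-length word $\alpha_i$ or $\beta_i$) to bound $\tau_{(\boldsymbol\omega(p),\boldsymbol\gamma(p))}$ away from zero, and then invoke compactness on the weakly null orthonormal sequence to force the coefficient to vanish. The only cosmetic differences are that the paper works directly with Proposition~\ref{Toep-def2} (the ratio $\tau_{(\boldsymbol\omega,\boldsymbol\gamma)}/\tau_{(\boldsymbol\omega',\boldsymbol\gamma')}$) rather than separating out the diagonal case, and concludes via that same proposition instead of Corollary~\ref{formal2}; your telescoping justification for step~(2) is exactly what the paper records as relation~\eqref{bfrac}.
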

\begin{proof} Let  $(\boldsymbol \omega', \boldsymbol\gamma')\in \cJ$ and let
$(\boldsymbol \omega, \boldsymbol\gamma)\in \cC$ be such that
${\bf s}(\boldsymbol \omega, \boldsymbol\gamma)=(\boldsymbol \omega', \boldsymbol\gamma')$.
Due  to Proposition \ref{Toep-def2},  we have
\begin{equation}\label{T}
\left<Te_{\boldsymbol\gamma },  e_{\boldsymbol\omega} \right>=
\frac{\tau_{(\boldsymbol\omega,\boldsymbol\gamma)}}{\tau_{(\boldsymbol \omega', \boldsymbol\gamma')}}\left<Te_{\boldsymbol\gamma'}, e_{\boldsymbol\omega'} \right>.
\end{equation}
The condition in the theorem implies that
  for any $\sigma\in \FF_{n_i}^+$, we   have
  \begin{equation}
  \label{bfrac}
  \sup_{\alpha\in \FF_{n_i}} \frac{b_{i, \sigma\alpha}^{(m_i)}}{b_{i,\alpha}}<\infty.
 \end{equation}
  On the other hand, we have
$$
\frac{\tau_{(\boldsymbol\omega,\boldsymbol\gamma)}}{\tau_{(\boldsymbol \omega', \boldsymbol\gamma')}}
= \left(\prod_{i=1}^k \sqrt{\frac{b^{(m_i)}_{i,\min\{\omega_i, \gamma_i\}}}{b^{(m_i)}_{i,\max\{\omega_i, \gamma_i\}}}}\right)
\left(\prod_{i=1}^k {\sqrt{b^{(m_i)}_{i,\max\{\boldsymbol \omega_i', \boldsymbol\gamma_i'\}}}}\right).
$$
Using  relation \eqref{bfrac}, we deduce that
$$
\liminf_{|\min \{\omega_i, \gamma_i\}|\to \infty} \frac{b^{(m_i)}_{i,\min\{\omega_i, \gamma_i\}}}{b^{(m_i)}_{i,\max\{\omega_i, \gamma_i\}}}>0,
$$
which implies
\begin{equation}
\label{TT}
\liminf_{|\min \{\omega_i, \gamma_i\}|\to \infty} \frac{\tau_{(\boldsymbol\omega,\boldsymbol\gamma)}}{\tau_{(\boldsymbol \omega', \boldsymbol\gamma')}}>0.
\end{equation}
Note that if  $ |\min \{\omega_i, \gamma_i\}|\to \infty$ for each $i\in \{1,\ldots, k\}$, then  $e_{\boldsymbol\gamma}\to 0$ weakly.  If $T$ is compact operator, then $Te_{\boldsymbol\gamma}\to 0$ in norm. Using relations
\eqref{T}  and \eqref{TT}, we deduce that
$\left<Te_{\boldsymbol\gamma'}, e_{\boldsymbol\omega'} \right>=0$. Now, using again relation \eqref{T}, we deduce that $\left<Te_{\boldsymbol\gamma },  e_{\boldsymbol\omega} \right>=0$
for any $(\boldsymbol \omega, \boldsymbol\gamma)\in \cC$ such that
${\bf s}(\boldsymbol \omega, \boldsymbol\gamma)=(\boldsymbol \omega', \boldsymbol\gamma')$.
Taking into account  Proposition \ref{Toep-def2}, we conclude that $T=0$.
The proof is complete.
\end{proof}
We should mention that the later theorem  was proved in \cite{Po-Toeplitz-poly-hyperball} in the particular case of the poly-hyperball.
In what follows we present another class of polydomains for which Theorem \ref{compact} holds.

Let ${\boldsymbol\varphi }:=(\varphi_1,\ldots, \varphi_k)$ be the $k$-tuple of positive regular free  holomorphic functions defined by
$$
\varphi_i=\sum_{\alpha\in \FF_{n_i}^+, |\alpha|\geq 1} Z_{i,\alpha}.
$$

\begin{corollary} If $T\in B(\otimes_{s=1}^k  F^2(H_{n_s}))$ is a compact weighted multi-Toeplitz operator with respect to the poldomain ${\bf D}_{\boldsymbol\varphi}^{\bf m}$, then $T=0$.
\end{corollary}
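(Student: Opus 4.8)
The plan is to derive this corollary directly from Theorem~\ref{compact}: we only need to check that the coefficients $b_{i,\alpha}^{(m_i)}$ associated with the $k$-tuple ${\boldsymbol\varphi}$ satisfy the growth condition
\[
\sup_{\alpha\in\FF_{n_i}^+}\frac{b^{(m_i)}_{i,g_j^i\alpha}}{b^{(m_i)}_{i,\alpha}}<\infty,\qquad i\in\{1,\dots,k\},\ j\in\{1,\dots,n_i\},
\]
and this is a purely combinatorial computation. Since $\varphi_i=\sum_{\alpha\in\FF_{n_i}^+,\,|\alpha|\geq 1}Z_{i,\alpha}$, every coefficient $a_{i,\gamma}$ with $|\gamma|\geq 1$ equals $1$; hence formula~\eqref{b-al2} reduces to
\[
b_{i,\alpha}^{(m_i)}=\sum_{j=1}^{|\alpha|}\binom{j+m_i-1}{m_i-1}\,\#\bigl\{(\gamma_1,\dots,\gamma_j):\gamma_1\cdots\gamma_j=\alpha,\ |\gamma_\ell|\geq 1\bigr\}.
\]
The number of ways of breaking a word of length $p$ into $j$ subwords of positive length equals the number of compositions of $p$ into $j$ positive parts, namely $\binom{p-1}{j-1}$; in particular $b_{i,\alpha}^{(m_i)}$ depends on $\alpha$ only through $p=|\alpha|$. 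Writing $c_0^{(m_i)}:=1$ and $c_p^{(m_i)}:=\sum_{j=1}^{p}\binom{p-1}{j-1}\binom{j+m_i-1}{m_i-1}$ for $p\geq 1$, and noting that $g_j^i\alpha$ has length $|\alpha|+1$, the displayed condition is equivalent to $\sup_{p\geq 0}c_{p+1}^{(m_i)}/c_p^{(m_i)}<\infty$.

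The main step is then to evaluate the generating function of $(c_p^{(m_i)})_{p\geq0}$. Interchanging the order of summation and using $\sum_{p\geq j}\binom{p-1}{j-1}t^p=(t/(1-t))^{j}$ and $\sum_{j\geq 0}\binom{j+m_i-1}{m_i-1}u^j=(1-u)^{-m_i}$, one gets, with $u=t/(1-t)$,
\[
\sum_{p\geq 0}c_p^{(m_i)}t^p=(1-u)^{-m_i}=\Bigl(\frac{1-t}{1-2t}\Bigr)^{m_i}.
\]
This is a rational function whose only pole is at $t=\tfrac12$, of order $m_i$, so the standard asymptotics for coefficients of rational functions give $c_p^{(m_i)}\sim C_{m_i}\,p^{m_i-1}\,2^p$; in particular $c_{p+1}^{(m_i)}/c_p^{(m_i)}\to 2$ as $p\to\infty$, and a convergent sequence of positive numbers is bounded. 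Alternatively, to avoid asymptotics altogether, the Pascal relation $\binom{p}{j-1}=\binom{p-1}{j-1}+\binom{p-1}{j-2}$ together with $\binom{j+m_i}{m_i-1}=\frac{j+m_i}{j+1}\binom{j+m_i-1}{m_i-1}\leq m_i\binom{j+m_i-1}{m_i-1}$ yields $c_{p+1}^{(m_i)}\leq(1+m_i)c_p^{(m_i)}$ directly. Either way, the hypothesis of Theorem~\ref{compact} holds for ${\bf D}_{\boldsymbol\varphi}^{\bf m}$, and the theorem gives $T=0$.

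I expect the only slightly delicate point to be the bookkeeping: verifying that $b_{i,\alpha}^{(m_i)}$ depends on $\alpha$ only through $|\alpha|$ and pinning down the closed form $\bigl((1-t)/(1-2t)\bigr)^{m_i}$ for the generating function. Everything after that is an immediate appeal to Theorem~\ref{compact}.
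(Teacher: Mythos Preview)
Your proof is correct and follows the same overall strategy as the paper: verify that the coefficients $b_{i,\alpha}^{(m_i)}$ for ${\boldsymbol\varphi}$ satisfy the boundedness hypothesis of Theorem~\ref{compact}, reducing via the observation that $b_{i,\alpha}^{(m_i)}$ depends only on $|\alpha|$ to bounding the ratio $c_{p+1}^{(m_i)}/c_p^{(m_i)}$.

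The difference lies in how that ratio is controlled. The paper introduces the auxiliary polynomial $p_d(x)=x^{m_i}(1+x)^d$, identifies $c_{p}^{(m_i)}$ (up to a factorial) with $p_{p-1}^{(m_i-1)}(1)$, and then extracts the leading-order term of the $(m_i-1)$-th derivative to show the ratio tends to $2$. Your route via the generating function $\sum_{p\geq 0}c_p^{(m_i)}t^p=\bigl((1-t)/(1-2t)\bigr)^{m_i}$ reaches the same limit by singularity analysis, and your alternative Pascal--identity bound $c_{p+1}^{(m_i)}\leq(1+m_i)c_p^{(m_i)}$ is more elementary than either argument, dispensing with asymptotics altogether. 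Both of your arguments are cleaner than the paper's derivative computation; the direct combinatorial bound in particular would be the shortest self-contained proof.
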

\begin{proof}
Let $\alpha\in \FF_{n_i}^+$ be such that $|\alpha|=d\in \{1,\ldots, k\}$. Note that  if $j\in \{1,\ldots, d\}$, then each tuple $(q_1,\ldots, q_j)\in \NN^k$ with $q_1+\cdots+q_j=d$ corresponds to a unique tuple
 $(\gamma_1,\ldots \gamma_j)$ with $\gamma_s\in \FF_{n_i}^+$ such that $\gamma_1\cdots \gamma_j=\alpha$ and $|\gamma_1|=q_1$,\ldots, $|\gamma_j|=q_j$.
On the other hand, it is well-known that
$$
\text{\rm card}\left\{ (q_1,\ldots, q_j)\in \NN^k: \  q_1+\cdots+q_j=d\right\}=
\left(\begin{matrix} d-1\\j-1\end{matrix}\right).
$$
Taking into account that
\begin{equation*}
 b_{i,\alpha}^{(m_i)}:= \sum_{j=1}^{|\alpha|}
\sum_{{\gamma_1,\ldots,\gamma_j\in \FF_{n_i}^+}\atop{{\gamma_1\cdots \gamma_j=\alpha }\atop {|\gamma_1|\geq
1,\ldots, |\gamma_j|\geq 1}}} a_{i,\gamma_1}\cdots a_{i,\gamma_j}\left(\begin{matrix} j+m_i-1\\m_i-1\end{matrix}\right)
   \qquad
\text{ if } \ \alpha\in \FF_{n_i}^+,  |\alpha|\geq 1,
\end{equation*}
and $a_{i,\alpha}=1$ for any $\alpha\in \FF_{n_i}^+$.
we deduce that
$$
b_{i,\alpha}^{(m_i)}=\sum_{j=1}^d \left(\begin{matrix} d-1\\j-1\end{matrix}\right)
\left(\begin{matrix} j+m_i-1\\m_i-1\end{matrix}\right).
$$
Consider the polynomial
$$p_d(x):= x^{m_i}(1+x)^d=\left(\begin{matrix} d\\0\end{matrix}\right)x^{m_i}+\left(\begin{matrix} d\\1\end{matrix}\right)x^{m_i+1}+\cdots + \left(\begin{matrix} d\\d\end{matrix}\right)x^{m_i+d}
$$
and note that the derivative of $p_d$ of order $m_i-1$  satisfies the relation
\begin{equation*}
\frac{p_d^{(m_i-1)}(1)}{(m_i-1) !}=\sum_{j=1}^{d +1}\left(\begin{matrix} d\\j-1\end{matrix}\right)
\left(\begin{matrix} j+m_i-1\\m_i-1\end{matrix}\right).
\end{equation*}
Consequently, we have
$$
\frac{b^{(m_i)}_{i,g_j^i\alpha }}{b^{(m_i)}_{i,\alpha}}=\frac{p_d^{(m_i-1)}(1)}{p_{d-1}^{(m_i-1)}(1)}.
$$
In order to apply Theorem \ref{compact},  we need to  prove that  $\frac{p_d^{(m_i-1)}(1)}{p_{d-1}^{(m_i-1)}(1)}$ is bounded  as $d\to \infty$.  First, note that if $m_i=1$, then
$\frac{p_d(1)}{p_{d-1}(1)}=2$. Now assume that $m_i\geq 2$.
A careful straightforward  computation leads to the fact
$$
p_d^{(m_i-1)} (1)= 2^{d-m_i+1}\left[ d(d-1)\cdots (d-m_i+2)+ q_{m_i-2}(d)\right],
$$
where $q_{m_i-2}$ is a polynomial of degree $m_i-2$ in the variable $d$. Then, we obtain
$$
\frac{p_d^{(m_i-1)}(1)}{p_{d-1}^{(m_i-1)}(1)}
=\frac{2^{d-m_i+1}\left[ d(d-1)\cdots (d-m_i+2)+ q_{m_i-2}(d)\right]}{2^{d-m_i}\left[ (d-1)(d-2)\cdots (d-m_i+1)+ q_{m_i-2}(d-1)\right]},
$$
which converges to $2$ as $d\to\infty$. Now, applying Theorem \ref{compact},  we can complete the proof.
\end{proof}

\bigskip

\section{Characterizations of weighted multi-Toeplitz operators associated with polydomains }

In this section, we  characterize the weighted multi-Toeplitz operators in terms of bounded free $k$-pluriharmonic functions on the radial part of ${\bf D_f^m}$ and  also  characterize the formal series
    which are  noncommutative Fourier representations  of weighted multi-Toeplitz operators.

The {\it noncommutative Berezin kernel} associated with any element
   ${\bf X}=\{X_{i,j}\}$ in the noncommutative polydomain ${\bf D_f^m}(\cH)$ is the operator
   $${\bf K_{f,X}}: \cH \to F^2(H_{n_1})\otimes \cdots \otimes  F^2(H_{n_k}) \otimes  \overline{{\bf \Delta_{f,X}^m}(I) (\cH)}$$
   defined by
   $$
   {\bf K_{f,X}}h:=\sum_{\beta_i\in \FF_{n_i}^+, i=1,\ldots,k}
   \sqrt{b_{1,\beta_1}^{(m_1)}}\cdots \sqrt{b_{k,\beta_k}^{(m_k)}}
   e^1_{\beta_1}\otimes \cdots \otimes  e^k_{\beta_k}\otimes {\bf \Delta_{f,X}^m}(I)^{1/2} X_{1,\beta_1}^*\cdots X_{k,\beta_k}^*h,
   $$
where   the defect operator is given  by
$$
{\bf \Delta_{f,X}^m}(I)  :=(id-\Phi_{f_1,X_1})^{m_1}\circ\cdots \circ (id-\Phi_{f_k,X_k})^{m_k}(I).
$$
The noncommutative Berezin kernel associated with a $k$-tuple
${\bf X}=({ X}_1,\ldots, { X}_k)$ in the noncommutative polydomain ${\bf D_f^m}(\cH)$ has the following properties.
\begin{enumerate}
\item[(i)] ${\bf K_{f,X}}$ is a contraction  and
$$
{\bf K_{f,X}^*}{\bf K_{f,X}}=
\lim_{q_k\to\infty}\ldots \lim_{q_1\to\infty}  (id-\Phi_{f_1,X_k}^{q_k})\circ\cdots \circ (id-\Phi_{f_k,X_1}^{q_1})(I).
$$
where the limits are in the weak  operator topology.
\item[(ii)]  If ${\bf X}$ is {\it pure},   i.e. $\Phi_{f_i,X_i}^p(I)\to 0$ strongly as $p\to \infty$, then
$${\bf K_{f,X}^*}{\bf K_{f,X}}=I_\cH. $$
\item[(iii)]  For any $i\in \{1,\ldots, k\}$ and $j\in \{1,\ldots, n_i\}$,  $${\bf K_{f,X}} { X}^*_{i,j}= ({\bf W}_{i,j}^*\otimes I)  {\bf K_{f,X}}.
    $$
\end{enumerate}
More on noncommutative Berezin transforms associated with noncommutative domains and polydomains can be found in  \cite{Po-poisson}, \cite{Po-domains-models},  \cite{Po-domains}, \cite{Po-Berezin2}, and  \cite{Po-Berezin1}.

The radial part of ${\bf D_f^m}$ is the noncommutative domain ${\bf D}_{{\bf f},rad}^{\bf m}$ whose representation on any Hilbert space $\cH$ is
$${\bf D}_{{\bf f},rad}^{\bf m}(\cH):= \cup_{r\in [0,1)}r{\bf D_f^m}(\cH).
$$
In what follows, we  also use the notation ${\bf X}_{\boldsymbol\alpha}:={\bf X}_{1,\alpha_1}\cdots {\bf X}_{k,\alpha_k}$  whenever $\boldsymbol\alpha:=(\alpha_1,\ldots, \alpha_k)\in {\bf F}_{\bf n}^+:=\FF_{n_1}^+\times\cdots \times \FF_{n_k}^+$, and   set $|\boldsymbol\alpha|:=|\alpha_1|+\cdots +|\alpha_k|$.

\begin{definition}We say that $F$ is a free  $k$-pluriharmonic function on the radial part of  ${\bf D_f^m}$  with coefficients in $B(\cK)$,  if its representation on a Hilbert space $\cH$ has the form
$$
F({\bf X})=\sum_{s_1\in \ZZ}\cdots \sum_{s_k\in \ZZ}
\sum_{(\boldsymbol \alpha, \boldsymbol \beta)\in \boldsymbol\cJ_{\bf s}} A_{(\boldsymbol \alpha, \boldsymbol \beta)}\otimes {\bf X}_{ \boldsymbol\alpha}{\bf X}_{\boldsymbol\beta}^*
$$
for any ${\bf X}\in {\bf D}_{{\bf f},rad}^{\bf m}(\cH)$, where the convergence is in the operator norm topology.
\end{definition}

An application  of the noncommutative  Berezin transforms associated with polydomains reveals  that $F$ is a free $k$-pluriharmonic function on the radial part of  ${\bf D_n^m}$  with coefficients in $B(\cK)$,  if and only if the series
$$
\sum_{s_1\in \ZZ}\cdots \sum_{s_k\in \ZZ}
\sum_{(\boldsymbol \alpha, \boldsymbol \beta)\in \boldsymbol\cJ_{\bf s}} r^{|\boldsymbol\alpha|+|\boldsymbol\beta|}A_{(\boldsymbol \alpha, \boldsymbol \beta)}\otimes {\bf W}_{\boldsymbol\alpha}{\bf W}_{\boldsymbol\beta}^*
$$
  convergences  in the operator norm topology for any $r\in [0,1)$.
A free  $k$-pluriharmonic function on the radial part of  ${\bf D_f^m}$  is called bounded if
$$\|F\|:=\sup_{{\bf X} \in {\bf D}_{{\bf f},rad}^{\bf m}(\cH)}||F({\bf X} )\|<\infty,
$$
where the supremum is taken over all Hilbert spaces $\cH$.

 In what follows, we need  the following result from \cite{Po-Toeplitz-poly-hyperball}.
\begin{lemma}\label{homo-decomp}
If $T\in B(\cK\bigotimes \otimes_{s=1}^k  F^2(H_{n_s}))$ and
   $\{T_{\bf s }\}_{{\bf s}=(s_1,\ldots, s_k)\in \ZZ^k}$ are the multi-homogeneous parts of $T$, then
   $$
   Tf=\lim_{N_1\to \infty}\ldots \lim_{N_k\to \infty} \sum_{{\bf s}=(s_1,\ldots, s_k)\in \ZZ^k, |s_j|\leq N_j}
   \left(1-\frac{|s_1|}{N_1+1}\right)\cdots \left(1-\frac{|s_k|}{N_k+1}\right) T_{\bf s }f
   $$
for any $f\in \cK\bigotimes \otimes_{s=1}^k  F^2(H_{n_s})$. Moreover,
$$
Tf=\lim_{N_1\to \infty}\ldots \lim_{N_k\to \infty} \sum_{{\bf s}=(s_1,\ldots, s_k)\in \ZZ^k, |s_j|\leq N_j} T_{\bf s }f
$$
for any $f\in \cK\otimes\cE_{p_1,\ldots, p_k}$  and any $(p_1,\ldots, p_k)\in \ZZ^k$.
\end{lemma}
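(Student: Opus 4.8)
The plan is to read both identities as statements about Ces\`aro summability of the Fourier series of a continuous Hilbert-space--valued function on the torus $\TT^k$. Fix $f\in\cK\bigotimes\otimes_{s=1}^k F^2(H_{n_s})$ and introduce
$$
g_f(e^{i\boldsymbol\theta}):=\left(I_\cK\otimes\Gamma(e^{i\boldsymbol\theta})\right)T\left(I_\cK\otimes\Gamma(e^{i\boldsymbol\theta})\right)^* f,\qquad \boldsymbol\theta\in[0,2\pi)^k .
$$
Since $\Gamma$ is a strongly continuous unitary representation and $T$ is bounded, a standard $\varepsilon/3$ argument (using that the $\Gamma(e^{i\boldsymbol\theta})$ are uniform contractions) shows that $g_f$ is norm-continuous on $\TT^k$. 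Moreover $g_f(e^{i{\bf 0}})=Tf$, and interpreting the weak operator integral in Definition \ref{mhp} as a vector-valued integral of the continuous function $g_f$, we get that its ${\bf s}$-th Fourier coefficient is $\widehat{g_f}({\bf s})=T_{\bf s}f$ for every ${\bf s}=(s_1,\dots,s_k)\in\ZZ^k$.

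For the first identity I would observe that $\sum_{|s_j|\le N_j}\prod_{j=1}^k\left(1-\frac{|s_j|}{N_j+1}\right)T_{\bf s}f$ is the value at the origin of the convolution of $g_f$ with the product Fej\'er kernel $K_{N_1}\otimes\cdots\otimes K_{N_k}$, where $K_N(\theta)=\sum_{|s|\le N}\left(1-\frac{|s|}{N+1}\right)e^{is\theta}$. Convolving a continuous function in the single variable $\theta_k$ with $K_{N_k}$ produces a continuous function that converges uniformly, as $N_k\to\infty$, to the original one (one-variable Fej\'er theorem, valid for Hilbert-space--valued continuous functions since $K_{N_k}\ge 0$ and $\frac1{2\pi}\int K_{N_k}=1$). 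Iterating this in the variables $\theta_k,\theta_{k-1},\dots,\theta_1$ shows that the iterated limit of $(K_{N_1}\otimes\cdots\otimes K_{N_k})\ast g_f$ is $g_f$ uniformly on $\TT^k$; evaluating at the origin gives the claimed formula.

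For the second identity I would use orthogonality instead of Ces\`aro means. If $f\in\cK\otimes\cE_{p_1,\dots,p_k}$, then $\Gamma(e^{i\boldsymbol\theta})^*|_{\cE_{p_1,\dots,p_k}}=e^{-i(p_1\theta_1+\cdots+p_k\theta_k)}I$, so $g_f(e^{i\boldsymbol\theta})=e^{-i\langle{\bf p},\boldsymbol\theta\rangle}\left(I_\cK\otimes\Gamma(e^{i\boldsymbol\theta})\right)(Tf)$. Decomposing $Tf=\sum_{{\bf q}\in\ZZ^k}(I_\cK\otimes{\bf P}_{{\bf q}})(Tf)$ along the orthogonal spectral subspaces and using that $I_\cK\otimes\Gamma(e^{i\boldsymbol\theta})$ acts as $e^{i\langle{\bf q},\boldsymbol\theta\rangle}$ on $\cK\otimes\cE_{\bf q}$, one reads off $T_{\bf s}f=\widehat{g_f}({\bf s})=(I_\cK\otimes{\bf P}_{{\bf s}+{\bf p}})(Tf)$ (which vanishes unless ${\bf s}+{\bf p}\ge{\bf 0}$, consistently with Proposition \ref{prop}). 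Hence $\sum_{|s_j|\le N_j}T_{\bf s}f=\sum_{{\bf q}:\,p_j-N_j\le q_j\le p_j+N_j}(I_\cK\otimes{\bf P}_{\bf q})(Tf)$ is a partial sum, over an increasing family of boxes exhausting $\ZZ^k$, of the norm-convergent orthogonal series $Tf=\sum_{\bf q}(I_\cK\otimes{\bf P}_{\bf q})(Tf)$; taking $N_k\to\infty$, then $N_{k-1}\to\infty$, and so on, each inner limit exists and produces the corresponding sub-sum, so the iterated limit equals $Tf$.

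The delicate point is that in the first identity one cannot replace the Fej\'er means by ordinary Fourier partial sums, since the latter may fail to converge even for continuous $g_f$; the argument genuinely relies on the positivity/approximate-identity property of the product Fej\'er kernel together with the continuity of $g_f$. In both parts one must also check that it is the \emph{iterated} limits (not just a joint limit) that behave as claimed, which is why I convolve — respectively exhaust — one variable at a time.
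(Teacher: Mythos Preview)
Your argument is correct. The paper does not actually prove this lemma; it simply quotes it from \cite{Po-Toeplitz-poly-hyperball}, so there is no in-paper proof to compare against. That said, your route---recognizing $T_{\bf s}f$ as the ${\bf s}$-th Fourier coefficient of the norm-continuous function $g_f(e^{i\boldsymbol\theta})=(I_\cK\otimes\Gamma(e^{i\boldsymbol\theta}))T(I_\cK\otimes\Gamma(e^{i\boldsymbol\theta}))^*f$, then invoking the vector-valued Fej\'er theorem (iterated one variable at a time using uniform continuity of $g_f$ on the compact torus) for the first identity, and reading off $T_{\bf s}f=(I_\cK\otimes{\bf P}_{{\bf s}+{\bf p}})(Tf)$ to reduce the second identity to partial sums of the orthogonal expansion $Tf=\sum_{\bf q}(I_\cK\otimes{\bf P}_{\bf q})Tf$---is exactly the standard proof of such a statement and is almost certainly what appears in the cited reference. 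Your remarks about why Ces\`aro means (rather than raw partial sums) are needed in the first part, and why the iterated limits must be handled one variable at a time, are on point.
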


Recall that, for each ${\bf s}=(s_1,\ldots, s_k)\in \ZZ^k$,
 $$
 \boldsymbol\cJ_{\bf s}:= \left\{ (\boldsymbol \alpha, \boldsymbol \beta):=(\alpha_1,\ldots,\alpha_k,\beta_1,\ldots, \beta_k): \   \alpha_i,\beta_i\in \FF_{n_i}^+  \text{ with } |\alpha_i|=s_i^+,   |\beta_i|=s_i^-\right\}
 $$
and  $\boldsymbol\cJ=\cup_{{\bf s}\in \ZZ^k} \boldsymbol\cJ_{\bf s}$.

The main result of this section is the following

\begin{theorem}\label{main}
If $T\in B(\cK\bigotimes \otimes_{s=1}^k  F^2(H_{n_s}))$, then the following statements are equivalent.
\begin{enumerate}
\item[(i)] $T$ is a  weighted multi-Toeplitz operator.
\item[(ii)] There is a   bounded free  $k$-pluriharmonic  function $F$ on  the radial polydomain ${\bf D}_{{\bf f},rad}^{\bf m}$ with coefficients in $B(\cK)$ such that
    $$T=\text{\rm SOT-}\lim_{r\to 1} F(r{\bf W}).$$
     \end{enumerate}
Moreover, the function $F$ is uniquely determined with the properties above, and
$$
\|T\|=\sup_{r\in [0,1)}\|F(r{\bf W})\|=\lim_{r\to 1}\|F(r{\bf W})\|=\sup_{\zeta\in \cP_\cK, \|\zeta\|\leq 1}\|F({\bf W})\zeta\|.
$$
\end{theorem}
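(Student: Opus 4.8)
The plan is to establish the equivalence in two directions, with the multi-homogeneous decomposition of Lemma \ref{homo-decomp} as the main technical engine, and then to read off the norm identities from the constructions used in each direction.

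For the implication $(i)\Rightarrow(ii)$, I would start from the unique formal Fourier representation $\varphi_T({\bf W},{\bf W}^*)=\sum_{(\boldsymbol\alpha,\boldsymbol\beta)\in\boldsymbol\cJ} A_{(\boldsymbol\alpha,\boldsymbol\beta)}\otimes {\bf W}_{\boldsymbol\alpha}{\bf W}_{\boldsymbol\beta}^*$ furnished by Corollary \ref{formal2}, group its terms into the multi-homogeneous pieces $q_{\bf s}({\bf W},{\bf W}^*):=\sum_{(\boldsymbol\alpha,\boldsymbol\beta)\in\boldsymbol\cJ_{\bf s}}A_{(\boldsymbol\alpha,\boldsymbol\beta)}\otimes{\bf W}_{\boldsymbol\alpha}{\bf W}_{\boldsymbol\beta}^*$ (each of which is, by Theorem \ref{multi-homo}, precisely the ${\bf s}$-multi-homogeneous part $T_{\bf s}$ of $T$), and define the candidate pluriharmonic function by
$$
F({\bf X}):=\sum_{s_1\in\ZZ}\cdots\sum_{s_k\in\ZZ}\sum_{(\boldsymbol\alpha,\boldsymbol\beta)\in\boldsymbol\cJ_{\bf s}}A_{(\boldsymbol\alpha,\boldsymbol\beta)}\otimes{\bf X}_{\boldsymbol\alpha}{\bf X}_{\boldsymbol\beta}^*.
$$
The key point is that $F(r{\bf W})$ is the noncommutative Berezin transform of $T$ at $r{\bf W}$, i.e.\ $F(r{\bf W})=({\bf K}_{{\bf f},r{\bf W}}^*\otimes I)(I\otimes T)({\bf K}_{{\bf f},r{\bf W}}\otimes I)$ for $r\in[0,1)$, using properties (i)--(iii) of the Berezin kernel; since ${\bf K}_{{\bf f},r{\bf W}}$ is a contraction this gives $\|F(r{\bf W})\|\le\|T\|$ for every $r$, so $F$ is a bounded free $k$-pluriharmonic function and its norm is at most $\|T\|$. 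Then, using the Abel/Cesàro-type summability of Lemma \ref{homo-decomp}, one shows $F(r{\bf W})\zeta\to T\zeta$ as $r\to 1$ for $\zeta$ in each spectral subspace $\cK\otimes\cE_{p_1,\ldots,p_k}$, hence on the dense subspace $\cP_\cK$; combined with the uniform bound $\|F(r{\bf W})\|\le\|T\|$ this upgrades to SOT-convergence $\text{SOT-}\lim_{r\to1}F(r{\bf W})=T$.

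For $(ii)\Rightarrow(i)$, suppose $F$ is a bounded free $k$-pluriharmonic function with $T=\text{SOT-}\lim_{r\to1}F(r{\bf W})$. Writing $F(r{\bf W})=\sum_{\bf s}r^{|\boldsymbol\alpha|+|\boldsymbol\beta|}$-weighted homogeneous pieces and using the weak integrals defining the multi-homogeneous parts, one checks that each $q_{\bf s}({\bf W},{\bf W}^*)$ (the norm-convergent ${\bf s}$-homogeneous block of $F$) is simultaneously a weighted multi-Toeplitz operator and a multi-homogeneous operator of degree ${\bf s}$, and that the $T_{\bf s}$ coincide with these blocks because SOT-convergence commutes with the projections ${\bf P}_{p_1,\ldots,p_k}$ that extract homogeneous parts. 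Verifying the defining matrix identity of Definition \ref{MT} for $T$ then reduces to verifying it for each $q_{\bf s}({\bf W},{\bf W}^*)$, which holds by the explicit form of ${\bf W}_{\boldsymbol\alpha}{\bf W}_{\boldsymbol\beta}^*e_{\boldsymbol\gamma}$ in Lemma \ref{monom1} together with the computation $\langle{\bf W}_{\boldsymbol\alpha}{\bf W}_{\boldsymbol\beta}^*e_{\boldsymbol\gamma},e_{\boldsymbol\omega}\rangle=\tau_{(\boldsymbol\omega,\boldsymbol\gamma)}$ carried out in the proof of Theorem \ref{multi-homo}; the off-comparable entries vanish because ${\bf W}_{\boldsymbol\alpha}{\bf W}_{\boldsymbol\beta}^*e_{\boldsymbol\gamma}$ has no component along $e_{\boldsymbol\omega}$ unless $(\boldsymbol\omega,\boldsymbol\gamma)\in\boldsymbol\cC$. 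Uniqueness of $F$ follows since two such functions would have the same multi-homogeneous parts (these are determined by $T$ via the weak integrals of Definition \ref{mhp}), hence the same coefficients $A_{(\boldsymbol\alpha,\boldsymbol\beta)}$ by the uniqueness in Corollary \ref{formal2}, hence agree as functions on the radial polydomain.

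For the norm formula, the inequality $\sup_{r}\|F(r{\bf W})\|\le\|T\|$ comes from the Berezin-transform contraction estimate above; conversely, for $\zeta\in\cP_\cK$ with $\|\zeta\|\le1$ we have $\|T\zeta\|=\lim_{r\to1}\|F(r{\bf W})\zeta\|\le\sup_r\|F(r{\bf W})\|$, and since $\cP_\cK$ is dense this gives $\|T\|\le\sup_r\|F(r{\bf W})\|$; the same density argument gives $\|T\|=\sup_{\zeta\in\cP_\cK,\|\zeta\|\le1}\|F({\bf W})\zeta\|$. Finally $\lim_{r\to1}\|F(r{\bf W})\|$ exists and equals the supremum because $r\mapsto\|F(r{\bf W})\|$ is nondecreasing — this is seen by writing $F(r{\bf W})$ as a Berezin transform of $F(r'{\bf W})$ for $r<r'$, using the factorization $r{\bf W}\in r/r'\cdot{\bf D}_{\bf f}^{\bf m}(\cdots)$ applied to $r'{\bf W}$ — so all four quantities coincide. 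The main obstacle I expect is the bookkeeping in the $(ii)\Rightarrow(i)$ direction: making rigorous that the SOT-limit $T$ has multi-homogeneous parts exactly equal to the norm-convergent homogeneous blocks of $F$, i.e.\ justifying the interchange of the $r\to1$ limit with the weak integral defining $T_{\bf s}$, which requires the uniform bound $\|F(r{\bf W})\|\le\|T\|$ (or an a priori bound on $F$) together with dominated convergence on the torus.
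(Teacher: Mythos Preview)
Your proposal is essentially correct and follows the same architecture as the paper's proof: multi-homogeneous decomposition, identification of $T_{\bf s}$ with the blocks $q_{\bf s}({\bf W},{\bf W}^*)$, the Berezin-kernel identity $F(r{\bf W})=(I_\cK\otimes{\bf K}_{r{\bf W}}^*)(T\otimes I)(I_\cK\otimes{\bf K}_{r{\bf W}})$ to get the uniform bound $\|F(r{\bf W})\|\le\|T\|$, SOT-convergence via density of $\cP_\cK$, and monotonicity of $r\mapsto\|F(r{\bf W})\|$ via the Berezin kernel at $\frac{r_1}{r_2}{\bf W}$.

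Two points worth flagging. First, your appeal to Theorem~\ref{multi-homo} to identify $q_{\bf s}$ with $T_{\bf s}$ is slightly circular as stated: that theorem takes as input a multi-homogeneous operator which is \emph{already known} to be weighted multi-Toeplitz. The paper therefore begins $(i)\Rightarrow(ii)$ by proving directly (via the integral defining $T_{\bf s}$ and Proposition~\ref{Toep-def2}) that each $T_{\bf s}$ is weighted multi-Toeplitz, and only then invokes Theorem~\ref{multi-homo}. Alternatively one can bypass this: since $\boldsymbol\cJ_{\bf s}$ is finite, $q_{\bf s}$ is bounded, and for $\zeta\in\cK\otimes\cE_{\bf p}$ one has $T_{\bf s}\zeta=(I_\cK\otimes{\bf P}_{{\bf s}+{\bf p}})T\zeta=(I_\cK\otimes{\bf P}_{{\bf s}+{\bf p}})\varphi_T({\bf W},{\bf W}^*)\zeta=q_{\bf s}\zeta$, which gives $T_{\bf s}=q_{\bf s}$ directly. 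Second, for $(ii)\Rightarrow(i)$ the paper takes a much shorter route than yours: each $q_{\bf s}(r{\bf W},r{\bf W}^*)$ is weighted multi-Toeplitz, hence so is the norm-convergent sum $F(r{\bf W})$; since the class of weighted multi-Toeplitz operators is WOT-closed (immediate from Definition~\ref{MT}) and $T=\text{SOT-}\lim_{r\to1}F(r{\bf W})$, $T$ is weighted multi-Toeplitz. Your direct verification of the matrix identity also works but is unnecessary.
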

\begin{proof}   Assume that $T$ is a weighted multi-Toeplitz operator and let
   $\{T_{\bf s}\}_{{\bf s}=(s_1,\ldots, s_k)\in \ZZ^k}$ be the multi-homogeneous parts of $T$.
    First, we need to show that  the multi-homogeneous parts of $T$ are also weighted multi-Toeplitz  operators.
Since $T\in B(\cK\bigotimes \otimes_{s=1}^k  F^2(H_{n_s}))$ is   a  weighted multi-Toeplitz,
  for any $ \boldsymbol\omega, \boldsymbol\gamma \in  {\bf F}_{n}^+$,
\begin{equation*}
\left<T(x\otimes e_{\boldsymbol\gamma }), y\otimes e_{\boldsymbol\omega} \right>
=\begin{cases}
\frac{\tau_{(\boldsymbol\omega,\boldsymbol\gamma)}}{\tau_{(\boldsymbol \omega', \boldsymbol\gamma')}}\left<T(x\otimes e_{\boldsymbol\gamma'}), y\otimes e_{\boldsymbol\omega'} \right>, & \text{ if  } (\boldsymbol \omega, \boldsymbol\gamma)\in \boldsymbol\cC,\\
0,&  \text{ if  } (\boldsymbol \omega, \boldsymbol\gamma)\in ({\bf F}_{\bf n }^+\times {\bf F}_{\bf n }^+)\backslash \boldsymbol\cC,
\end{cases}
\end{equation*}
where  $(\boldsymbol \omega', \boldsymbol\gamma'):={\bf s}(\boldsymbol \omega, \boldsymbol\gamma)$
 when $(\boldsymbol \omega, \boldsymbol\gamma)\in \boldsymbol\cC$.
Let $\boldsymbol\omega=(\omega_1,\ldots, \omega_k)$ , $\boldsymbol\gamma=(\gamma_1,\ldots, \gamma_k)$, $\boldsymbol\omega'=(\omega_1',\ldots, \omega_k')$ , $\boldsymbol\gamma'=(\gamma_1',\ldots, \gamma_k')$ be in ${\bf F}_{\bf n}^+$ such that  $(\boldsymbol \omega, \boldsymbol\gamma)\in \boldsymbol\cC$ and let
$(\boldsymbol \omega', \boldsymbol\gamma'):={\bf s}(\boldsymbol \omega, \boldsymbol\gamma)$.
Note that
$$
\Gamma (e^{i {\boldsymbol\theta}})^*e_{\boldsymbol\omega}=e^{-i|\omega_1|\theta_1}\cdots e^{-i|\omega_k|\theta_k}e_{\boldsymbol\omega},
$$
and $|\gamma_i|-|\omega_i|=|\gamma_i'|-|\omega_i'|$ for any $i\in \{1,\ldots, k\}$.
Using the relations above,
we deduce that
\begin{equation*}
\begin{split}
&\left<\left(I_\cK\otimes \Gamma (e^{i {\boldsymbol\theta}})\right) T
\left(I_\cK\otimes \Gamma (e^{i {\boldsymbol\theta}})\right)^*(x\otimes e_{\boldsymbol\gamma }), y\otimes e_{\boldsymbol\omega} \right>\\
&\qquad =
e^{-i(|\gamma_1|-|\omega_1|)\theta_1}\cdots e^{-i(|\gamma_k|-|\omega_k|)\theta_k}\left<T(x\otimes e_{\boldsymbol\gamma }), y\otimes e_{\boldsymbol\omega} \right>\\
&\qquad =
\frac{\tau_{(\boldsymbol\omega,\boldsymbol\gamma)}}{\tau_{(\boldsymbol \omega', \boldsymbol\gamma')}}
e^{-i(|\gamma_1'|-|\omega_1'|)\theta_1}\cdots e^{-i(|\gamma_k'|-|\omega_k'|)\theta_k}\left<T(x\otimes e_{\boldsymbol\gamma' }), y\otimes e_{\boldsymbol\omega'} \right>\\
&\qquad =
\frac{\tau_{(\boldsymbol\omega,\boldsymbol\gamma)}}{\tau_{(\boldsymbol \omega', \boldsymbol\gamma')}}
\left<\left(I_\cK\otimes \Gamma (e^{i {\boldsymbol\theta}})\right) T
\left(I_\cK\otimes \Gamma (e^{i {\boldsymbol\theta}})\right)^*(x\otimes e_{\boldsymbol\gamma' }), y\otimes e_{\boldsymbol\omega'} \right>
\end{split}
\end{equation*}
Consequently, we have
\begin{equation*}
\begin{split}
&\left<T_{\bf s}(x\otimes e_{\boldsymbol\gamma }), y\otimes e_{\boldsymbol\omega} \right>\\
&=
\left(\frac{1}{2\pi}\right)^k
\int_0^{2\pi}\cdots \int_0^{2\pi} e^{-is_1\theta_1}\cdots e^{-is_k\theta_k}\left<\left(I_\cK\otimes \Gamma (e^{i {\boldsymbol\theta}})\right) T
\left(I_\cK\otimes \Gamma (e^{i {\boldsymbol\theta}})\right)^*(x\otimes e_{\boldsymbol\gamma }), y\otimes e_{\boldsymbol\omega} \right> d\theta_1\ldots d\theta_k\\
&=
\frac{\tau_{(\boldsymbol\omega,\boldsymbol\gamma)}}{\tau_{(\boldsymbol \omega', \boldsymbol\gamma')}}
\left(\frac{1}{2\pi}\right)^k
\int_0^{2\pi}\cdots \int_0^{2\pi} e^{-is_1\theta_1}\cdots e^{-is_k\theta_k}\left<\left(I_\cK\otimes \Gamma (e^{i {\boldsymbol\theta}})\right) T
\left(I_\cK\otimes \Gamma (e^{i {\boldsymbol\theta}})\right)^*(x\otimes e_{\boldsymbol\gamma' }), y\otimes e_{\boldsymbol\omega'} \right> d\theta_1\ldots d\theta_k\\
&=
\frac{\tau_{(\boldsymbol\omega,\boldsymbol\gamma)}}{\tau_{(\boldsymbol \omega', \boldsymbol\gamma')}}
\left<T_{\bf s}(x\otimes e_{\boldsymbol\gamma' }), y\otimes e_{\boldsymbol\omega'} \right>.
\end{split}
\end{equation*}
On the other hand,  if $(\boldsymbol \omega, \boldsymbol\gamma)\in ({\bf F}_{\bf n }^+\times {\bf F}_{\bf n }^+)\backslash \boldsymbol\cC$, then similar calculations show that
$\left<T_{\bf s}(x\otimes e_{\boldsymbol\gamma }), y\otimes e_{\boldsymbol\omega} \right>=0$.
Consequently, $T_{\bf s}$ is a weighted multi-Toeplitz  operator.

    Now, we can apply  Theorem
\ref{multi-homo} to deduce that
\begin{equation*}
  T_{\bf s }g=q_{\bf s}({\bf W}, {\bf W}^*) g,\qquad g\in \cK\bigotimes \otimes_{s=1}^k  F^2(H_{n_s}),
\end{equation*}
where
 \begin{equation}
 \label{q}
 q_{\bf s}({\bf W}, {\bf W}^*):=
\sum_{(\boldsymbol \alpha, \boldsymbol \beta)\in \boldsymbol\cJ_{\bf s}} A_{(\boldsymbol \alpha, \boldsymbol \beta)}\otimes {\bf W}_{\boldsymbol\alpha }{\bf W}_{\boldsymbol\beta }^*,
\end{equation}
for some operators $A_{(\boldsymbol \alpha, \boldsymbol \beta)}\in B(\cK)$.
On the other hand, according to Lemma \ref{homo-decomp}, we have
   $$
Tf=\lim_{N_1\to \infty}\ldots \lim_{N_k\to \infty} \sum_{{\bf s}=(s_1,\ldots, s_k)\in \ZZ^k, |s_j|\leq N_j} T_{\bf s }f
$$
for any $f\in \cK\otimes\cE_{p_1,\ldots, p_k}$  and any $(p_1,\ldots, p_k)\in \ZZ^k$.
Denote by $\cP_\cK$ the linear span of all vectors of the form
$h\otimes e^1_{\alpha_1}\otimes \cdots\otimes e^k_{\alpha_k}$, where $h\in \cK$, $\alpha_i\in \FF_{n_i}^+$.
Combining the results above, we deduce that
\begin{equation}
\label {Tpp}
T\zeta=\lim_{N_1\to \infty}\ldots \lim_{N_k\to \infty} \sum_{{\bf s}=(s_1,\ldots, s_k)\in \ZZ^k, |s_j|\leq N_j} q_{\bf s}({\bf W}, {\bf W}^*)\zeta,\qquad \zeta\in \cP_\cK.
\end{equation}
We remark that, for any $x,y\in  \cK\bigotimes \otimes_{s=1}^k  F^2(H_{n_s})$, we have
\begin{equation*}
\begin{split}
|\left<T_{\bf s }x,y\right>|
&\leq
\left(\frac{1}{2\pi}\right)^k
\int_0^{2\pi}\cdots \int_0^{2\pi}  |\left<T
\left(I_\cK\otimes \Gamma (e^{i {\boldsymbol\theta}})^*\right) x, \left(I_\cK\otimes \Gamma (e^{i {\boldsymbol\theta}})^*\right)y\right>|d\theta_1\ldots d\theta_k\\
&\leq \|T\|\|x\|\|y\|,
\end{split}
\end{equation*}
which implies $\|T_{\bf s }\|\leq \|T\|$ for any ${\bf s}:=(s_1,\ldots, s_k)\in \ZZ^k$. As a consequence, we deduce that the series
\begin{equation*}
\begin{split}
\sum_{{\bf s} \in \ZZ^k}q_{\bf s}(r{\bf W}, r{\bf W}^*)&=
\sum_{{\bf s}=(s_1,\ldots, s_k)\in \ZZ^k}r^{|s_1|+\cdots +|s_k|}q_{\bf s }({\bf W}, {\bf W}^*)
=\sum_{{\bf s}=(s_1,\ldots, s_k)\in \ZZ^k}r^{|s_1|+\cdots +|s_k|} T_{\bf s}
\end{split}
\end{equation*}
are convergent in the operator norm topology.
Now, we prove that, for any $\zeta\in \cP_\cK$,
\begin{equation}
\label{conv}
\left\|\sum_{{\bf s} \in \ZZ^k}q_{\bf s}(r{\bf W}, r{\bf W}^*)\zeta-\sum_{{\bf s} \in \ZZ^k}q_{\bf s}({\bf W}, {\bf W}^*)\zeta \right\|\to 0,\quad \text{as } \ r\to 1.
\end{equation}
 It is enough to prove the relation when $\zeta\in \cK\otimes \cE_{p_1,\ldots, p_k}$.
To this end, let $\epsilon>0$ and note that    relation \eqref{Tpp}, implies the existence of   a finite set $\Gamma\subset  \ZZ^k$ such that
$\left\|\sum_{{\bf s}\in \ZZ^k\backslash \Gamma}q_{\bf s}({\bf W}, {\bf W}^*)\zeta \right\|<\epsilon$.
Since the vectors  $\{q_{\bf s }({\bf W}, {\bf W}^*)\zeta \}_{ {\bf s}\in \ZZ^k}$ are pairwise orthogonal
and
$$
\sum_{{\bf s} \in \ZZ^k\backslash \Gamma}\left\|q_{\bf s}(r{\bf W}, r{\bf W}^*)\zeta \right\|^2
=\sum_{{\bf s}
=(s_1,\ldots, s_k)\in \ZZ^k\backslash \Gamma}r^{(|s_1|+\cdots +|s_k|)}\left\|q_{\bf s}({\bf W}, {\bf W}^*)\zeta  \right\|^2\leq \epsilon^2,
$$
we deduce that
\begin{equation*}
\begin{split}
&\left\|\sum_{{\bf s}\in \ZZ^k}q_{\bf s}(r{\bf W}, r{\bf W}^*)\zeta -\sum_{ {\bf s}\in \ZZ^k}q_{\bf s}({\bf W}, {\bf W}^*)\zeta \right\|
\leq
\sum_{{\bf s}\in \Gamma}\left\|q_{\bf s}(r{\bf W}, r{\bf W}^*)\zeta-q_{\bf s}({\bf W}, {\bf W}^*)\zeta\right\|\\
&\qquad+
\left(\sum_{{\bf s}\in \ZZ^k\backslash \Gamma}\left\|q_{\bf s}(r{\bf W}, r{\bf W}^*)\zeta \right\|^2\right)^{1/2}
+\left(\sum_{ {\bf s}\in \ZZ^k\backslash \Gamma}\left\|q_{\bf s}({\bf W}, {\bf W}^*)\zeta \right\|^2\right)^{1/2}\\
& \qquad\leq
\sum_{{\bf s}\in \Gamma}\left\|q_{\bf s}(r{\bf W}, r{\bf W}^*)\zeta-q_{\bf s}({\bf W}, {\bf W}^*)\zeta\right\| + 2\epsilon.
\end{split}
\end{equation*}
Taking $r\to 1$, one can easily  deduce relation \eqref{conv}.
In what follows,  we prove that
\begin{equation}
\label{VN}
\left\|\sum_{{\bf s}\in \ZZ^k}q_{\bf s }(r{\bf W}, r{\bf W}^*)\right\|\leq \|T\|,\qquad r\in[0,1).
\end{equation}

We recall that  the noncommutative Berezin kernel associated with $r{\bf W}\in {\bf D_n^m}(\otimes _{i=1}^k F^2(H_{n_i}))$ is defined on
$\otimes_{i=1}^kF^2(H_{n_{i}})$ with values in $ \otimes_{i=1}^kF^2(H_{n_{i}})\otimes \cD_{r{\bf W}}\subset \left(\otimes_{i=1}^kF^2(H_{n_{i}})\right)\otimes \left(\otimes_{i=1}^kF^2(H_{n_{i}})\right)$, where $\cD_{r{\bf W}}:=\overline{\boldsymbol{\Delta}^{\bf m}_{r{\bf W}}(I)(\otimes_{i=1}^kF^2(H_{n_i}))}$.
Let $\boldsymbol\gamma=(\gamma_1,\ldots, \gamma_k)$ and $\boldsymbol \omega=(\omega_1,\ldots, \omega_k)$ be  $k$-tuples in ${\bf F}_{\bf n}^+$, set $q:=\max\{|\gamma_1|, \ldots |\gamma_k|, |\omega_1|,\ldots,|\omega_k|\}$, and define the operator
$$
\cT_q:=\sum_{s_1\in \ZZ, |s_1|\leq q}\cdots\sum_{s_k\in \ZZ, |s_k|\leq q}\sum_{(\boldsymbol \alpha, \boldsymbol \beta)\in \boldsymbol\cJ_{\bf s}} A_{(\boldsymbol \alpha, \boldsymbol \beta)}\otimes {\bf W}_{\boldsymbol\alpha} {\bf W}_{\boldsymbol\beta}^*,
$$
where we use the notation ${\bf W}_{\boldsymbol\alpha}:={\bf W}_{1,\alpha_1}\cdots {\bf W}_{k,\alpha_k}$ if $\boldsymbol\alpha:=(\alpha_1,\ldots, \alpha_k)\in{\bf F_n}^+:= \FF_{n_1}^+\times\cdots \times \FF_{n_k}^+$. We also set $e_{\boldsymbol\alpha}:=e_{\alpha_1}^1\otimes \cdots \otimes e_{\alpha_k}^k$ and $b_{\boldsymbol\alpha}^{({\bf m})}:=\sqrt{b_{1,\alpha_1}^{(m_1)}}\cdots \sqrt{b_{k,\alpha_k}^{(m_k)}}$.

Using the definition of the noncommutative Berezin kernel ${\bf K}_{r{\bf W}}$ and that of the universal model ${\bf W}$, we deduce that
\begin{equation*}
\begin{split}
&\left<(I_\cK\otimes {\bf K}_{r{\bf W}}^*) (T\otimes I_{\otimes_{i=1}^kF^2(H_{n_{i}})})(I_\cK\otimes {\bf K}_{r{\bf W}})(h\otimes e_{\boldsymbol\gamma}), h'\otimes e_{\boldsymbol\omega}\right>\\
&=\left<(T\otimes I_{\otimes_{i=1}^kF^2(H_{n_{i}})})
\sum_{\boldsymbol\alpha\in {\bf F}_{\bf n}^+}h\otimes b_{\boldsymbol\alpha}^{({\bf m})}e_{\boldsymbol\alpha}\otimes \boldsymbol\Delta_{r{\bf W}}(I)^{1/2}{\bf W}_{\boldsymbol\alpha}^*(e_{\boldsymbol\gamma})\right.,\\
&\qquad\qquad \left.
\sum_{\boldsymbol\beta\in {\bf F}_{\bf n}^+}h'\otimes b_{\boldsymbol\beta}^{({\bf m})}e_{\boldsymbol\beta}\otimes \boldsymbol\Delta_{r{\bf W}}(I)^{1/2}{\bf W}_{\boldsymbol\beta}^*(e_{\boldsymbol\omega})\right>
\\
&=
\sum_{\boldsymbol\alpha\in {\bf F}_{\bf n}^+}
\sum_{\boldsymbol\beta\in {\bf F}_{\bf n}^+}
\left<
T(h\otimes b_{\boldsymbol\alpha}^{({\bf m})}e_{\boldsymbol\alpha})\otimes \boldsymbol\Delta_{r{\bf W}}(I)^{1/2}{\bf W}_{\boldsymbol\alpha}^*(e_{\boldsymbol\gamma}),
h'\otimes b_{\boldsymbol\beta}^{({\bf m})}e_{\boldsymbol\beta}\otimes \boldsymbol\Delta_{r{\bf W}}(I)^{1/2}{\bf W}_{\boldsymbol\beta}^*(e_{\boldsymbol\omega})\right>
\\
&=
\sum_{\boldsymbol\alpha\in {\bf F}_{\bf n}^+}
\sum_{\boldsymbol\beta\in {\bf F}_{\bf n}^+}
\left<
T(h\otimes b_{\boldsymbol\alpha}^{({\bf m})}e_{\boldsymbol\alpha}), h'\otimes b_{\boldsymbol\beta}^{({\bf m})}e_{\boldsymbol\beta}\right>
\left<\boldsymbol\Delta_{r{\bf W}}(I)^{1/2}{\bf W}_{\boldsymbol\alpha}^*(e_{\boldsymbol\gamma}),
 \boldsymbol\Delta_{r{\bf W}}(I)^{1/2}{\bf W}_{\boldsymbol\beta}^*(e_{\boldsymbol\omega})\right> \\
 &=
 \sum_{s_1\in \ZZ, |s_1|\leq q}\cdots\sum_{s_k\in \ZZ, |s_k|\leq q}\sum_{{\alpha_i,\beta_i\in \FF_{n_k}^+, i\in \{1,\ldots, k\}}\atop{|\alpha_i|=s_i^+, |\beta_i|=s_i^-}}
 \left<
\cT_q(h\otimes b_{\boldsymbol\alpha}^{({\bf m})}e_{\boldsymbol\alpha}), h'\otimes b_{\boldsymbol\beta}^{({\bf m})}e_{\boldsymbol\beta}\right>\\
&\qquad \qquad \times
\left<\boldsymbol\Delta_{r{\bf W}}(I)^{1/2}{\bf W}_{\boldsymbol\alpha}^*(e_{\boldsymbol\gamma}),
 \boldsymbol\Delta_{r{\bf W}}(I)^{1/2}{\bf W}_{\boldsymbol\beta}^*(e_{\boldsymbol\omega})\right>
 \\
&=
 \sum_{\boldsymbol\alpha\in {\bf F}_{\bf n}^+}
\sum_{\boldsymbol\beta\in {\bf F}_{\bf n}^+}
\left<
\cT_q(h\otimes b_{\boldsymbol\alpha}^{({\bf m})}e_{\boldsymbol\alpha}), h'\otimes b_{\boldsymbol\beta}^{({\bf m})}e_{\boldsymbol\beta}\right>
\left<\boldsymbol\Delta_{r{\bf W}}(I)^{1/2}{\bf W}_{\boldsymbol\alpha}^*(e_{\boldsymbol\gamma}),
 \boldsymbol\Delta_{r{\bf W}}(I)^{1/2}{\bf W}_{\boldsymbol\beta}^*(e_{\boldsymbol\omega})\right>.
 \end{split}
\end{equation*}
On the other hand, note that the later sum is equal to

\begin{equation*}
\begin{split}
  &\left<(I_{\cK}\otimes {\bf K}_{r{\bf W}}^*)(\cT_q\otimes I_{\otimes_{i=1}^kF^2(H_{n_{i}})})
(I_{\cE}\otimes {\bf K}_{r{\bf W}})(h\otimes e_{\boldsymbol\gamma}), h'\otimes e_{\boldsymbol\omega}\right>\\
 &=
\sum_{s_1\in \ZZ, |m_1|\leq q}\cdots\sum_{s_k\in \ZZ, |m_k|\leq q}\sum_{{\alpha_i,\beta_i\in \FF_{n_k}^+, i\in \{1,\ldots, k\}}\atop{|\alpha_i|=s_i^+, |\beta_i|=s_i^-}}\\
&\qquad \qquad
\left< \left(A_{(\alpha_1,\ldots, \alpha_k, \beta_1,\ldots, \beta_k)}\otimes r^{\sum_{i=1}^k(|\alpha_i|+|\beta_i|)} {\bf W}_{\boldsymbol\alpha} {\bf W}_{\boldsymbol\beta}^*\right)(h\otimes e_{\gamma}), h'\otimes e_{\boldsymbol\omega}\right>\\
&=\left<\sum_{ {\bf s}\in \ZZ^k}q_{ \bf s}(r{\bf W}, r{\bf W}^*)(h\otimes e_{\boldsymbol\gamma}), h'\otimes e_{\boldsymbol\omega}\right>.
\end{split}
\end{equation*}
Hence, we deduce that
\begin{equation*}
(I_\cK\otimes {\bf K}_{r{\bf W}}^*) (T\otimes I_{\otimes_{i=1}^kF^2(H_{n_{i}})})(I_\cK\otimes {\bf K}_{r{\bf W}})
=\sum_{ {\bf s}\in \ZZ^k}q_{\bf s }(r{\bf W}, r{\bf W}^*)
\end{equation*}
for any $r\in [0,1)$. Since  the noncommutative Berezin kernel
 ${\bf K}_{r{\bf W}}$ is an isometry, we obtain inequality \eqref{VN}.
In what follows, we prove that
\begin{equation}\label{SOT}
\text{\rm SOT-}\lim_{r \to 1}\sum_{ {\bf s}\in \ZZ^k}q_{\bf s }(r{\bf W}, r{\bf W}^*)=T.
\end{equation}
Indeed, let  $\epsilon>0$  and $\psi\in \cK\bigotimes \otimes_{s=1}^k  F^2(H_{n_s})$. Then there is $\zeta\in \cP_\cK$ such that $\|\psi-\zeta\|<\epsilon$ and, due to relations \eqref{Tpp} and \eqref{VN}, we deduce that
\begin{equation*}
\begin{split}
&\left\|\sum_{ {\bf s}\in \ZZ^k}q_{\bf s }(r{\bf W}, r{\bf W}^*)\psi-T\psi\right\|
\leq
\left\|\sum_{ {\bf s}\in \ZZ^k}q_{\bf s }(r{\bf W}, r{\bf W}^*)(\psi-\zeta)\right\|\\
&\qquad +\left\|\sum_{ {\bf s}\in \ZZ^k}q_{\bf s }(r{\bf W}, r{\bf W}^*)\zeta-\sum_{ {\bf s}\in \ZZ^k}q_{\bf s }({\bf W}, {\bf W}^*)\zeta\right\| +
\left\|\sum_{ {\bf s}\in \ZZ^k}q_{\bf s }({\bf W}, {\bf W}^*)\zeta-T\psi\right\|
\\
&\qquad \leq 2\|T\|\|\psi-\zeta\| +\left\|\sum_{ {\bf s}\in \ZZ^k}q_{\bf s }(r{\bf W}, r{\bf W}^*)\zeta-\sum_{ {\bf s}\in \ZZ^k}q_{\bf s}({\bf W}, {\bf W}^*)\zeta\right\|\\
&\qquad \leq 2\|T\|\epsilon +\left\|\sum_{ {\bf s}\in \ZZ^k}q_{\bf s }(r{\bf W}, r{\bf W}^*)\zeta-\sum_{ {\bf s}\in \ZZ^k}q_{\bf s}({\bf W}, {\bf W}^*)\zeta\right\|
\end{split}
\end{equation*}
Consequently, using relation \eqref{conv}, we deduce that
$$
\limsup_{r\to 1} \left\|\sum_{ {\bf s}\in \ZZ^k}q_{\bf s }(r{\bf W}, r{\bf W}^*)\psi-T\psi\right\|\leq 2\|T\|\epsilon
$$
for any $\epsilon >0$. Hence, we can see that
 relation \eqref{SOT} holds.
Note that $F({\bf X}):=\sum_{ {\bf s}\in \ZZ^k}q_{\bf s }({\bf X}, {\bf X}^*) $ is a free  $k$-pluriharmonic function on the radial polydomain  ${\bf D}_{{\bf f}, rad}^{\bf m}$.

  Now, we prove the converse (ii)$\implies$(i).
To this end, assume that there is a free $k$-pluriharmonic  function $F$ on  the radial polydomain ${\bf D}_{{\bf f},rad}^{\bf m}$ with coefficients in $B(\cK)$  of the form
$F({\bf X})=\sum_{ {\bf s}\in \ZZ^k}q_{ \bf s}({\bf X}, {\bf X}^*)$ such that
    $T=\text{\rm SOT-}\lim_{r\to 1} F(r{\bf W}).$
    Consequently,
    $$
    F(r{\bf W})=\sum_{ {\bf s}\in \ZZ^k}q_{ \bf s}(r{\bf W}, r{\bf W}^*),
    $$
    where the convergence of the series is in the operator norm topology and $q_{\bf s }(r{\bf W}, r{\bf W})^*$ has the form described by relation \eqref{q}.
Note that $ q_{\bf s }({\bf W}, {\bf W}^*)$ is a weighted multi-Toeplitz operator. Hence, we deduce that $ F(r{\bf W})$ is weighted  multi-Toeplitz operator  as well. Taking into account that the set of weighted multi-Toeplitz operators is WOT-closed  and  the fact that  $T=\text{\rm SOT-}\lim_{r\to 1} F(r{\bf W})$,  we conclude that   $T$ a  weighted multi-Toeplitz operator. Therefore condition (i) holds.

Assume that $F$ and $G$ are bounded free $k$-pluriharmonic functions  on the radial polydomain
${\bf D}_{{\bf f},rad}^{\bf m}$ with coefficients in $B(\cK)$ such that
    $$T=\text{\rm SOT-}\lim_{r\to 1} F(r{\bf W})=\text{\rm SOT-}\lim_{r\to 1} G(r{\bf W}).$$
Let
$F({\bf X}):=\sum_{(\boldsymbol \alpha, \boldsymbol \beta)\in \boldsymbol \cJ}A_{   (\boldsymbol \alpha, \boldsymbol \beta)}\otimes {\bf X}_{\boldsymbol\alpha}{\bf X}_{\boldsymbol\beta}^*$ and
$G({\bf X}):=\sum_{(\boldsymbol \alpha, \boldsymbol \beta)\in \boldsymbol \cJ}C_{   (\boldsymbol \alpha, \boldsymbol \beta)}\otimes {\bf X}_{\boldsymbol\alpha}{\bf X}_{\boldsymbol\beta}^*$ be the representations of $F$ and $G$, respectively.
If $ \boldsymbol\omega, \boldsymbol\gamma \in {\bf F}_{\bf n}^+$ and $x,y\in \cK$, we have
\begin{equation*}
\begin{split}
\left<T(x\otimes e_{\boldsymbol\gamma }), y\otimes e_{\boldsymbol\omega} \right>
&=\lim_{r\to 1}\left<F(r{\bf W})(x\otimes e_{\boldsymbol\gamma }), y\otimes e_{\boldsymbol\omega} \right>\\
&=
\lim_{r\to 1}\begin{cases}
\tau_{(\boldsymbol\omega,\boldsymbol\gamma)}r^{|{\bf s}(\boldsymbol\omega,\boldsymbol\gamma)|}\left<A_{{\bf s}(\boldsymbol \omega, \boldsymbol\gamma)}x,y\right>, & \text{ if  } (\boldsymbol \omega, \boldsymbol\gamma)\in \boldsymbol\cC,\\
0,&  \text{ if  } (\boldsymbol \omega, \boldsymbol\gamma)\in ({\bf F}_{\bf n }^+\times {\bf F}_{\bf n }^+)\backslash \boldsymbol\cC
\end{cases}\\
&= \begin{cases}
\tau_{(\boldsymbol\omega,\boldsymbol\gamma)}
\left<A_{{\bf s}(\boldsymbol \omega, \boldsymbol\gamma)}x,y\right>, & \text{ if  } (\boldsymbol \omega, \boldsymbol\gamma)\in \boldsymbol\cC,\\
0,&  \text{ if  } (\boldsymbol \omega, \boldsymbol\gamma)\in ({\bf F}_{\bf n }^+\times {\bf F}_{\bf n }^+)\backslash \boldsymbol\cC
\end{cases}
\end{split}
\end{equation*}
Similarly, we can prove that

\begin{equation*}
\begin{split}
\left<T(x\otimes e_{\boldsymbol\gamma }), y\otimes e_{\boldsymbol\omega} \right>=
\begin{cases}
\tau_{(\boldsymbol\omega,\boldsymbol\gamma)}
\left<C_{{\bf s}(\boldsymbol \omega, \boldsymbol\gamma)}x,y\right>, & \text{ if  } (\boldsymbol \omega, \boldsymbol\gamma)\in \boldsymbol\cC,\\
0,&  \text{ if  } (\boldsymbol \omega, \boldsymbol\gamma)\in ({\bf F}_{\bf n }^+\times {\bf F}_{\bf n }^+)\backslash \boldsymbol\cC.
\end{cases}
\end{split}
\end{equation*}
Now, it is clear that
$A_{{\bf s}(\boldsymbol \omega, \boldsymbol\gamma)}=C_{{\bf s}(\boldsymbol \omega, \boldsymbol\gamma)}$, which proves that $F=G$.

To prove the last part of the theorem, note that relation \eqref{Tpp} implies $\|T\|=\sup_{\zeta\in \cP_\cK, \|\zeta\|\leq 1}\|F({\bf W})\|$.  Taking into account relations \eqref{VN} and \eqref{SOT}, we deduce that
$\|T\|=\sup_{r\in [0,1)}\|F(r{\bf W})\|$. On the other hand, using the noncommutative Berezin kernel, we have

$$ F(r_1{\bf W})=(I_\cK\otimes {\bf K}_{\frac{r_1}{r_2}{\bf W}}^*) (F(r_2{\bf W})\otimes I_{\otimes_{i=1}^kF^2(H_{n_{i}})})(I_\cK\otimes {\bf K}_{\frac{r_1}{r_2}{\bf W}})
$$
for any $0\leq r_1<r_2<1$. Since ${\bf K}_{\frac{r_1}{r_2}{\bf W}}^*$ is an isomery, we deduce that
$\|F(r_1{\bf W})\|\leq \|F(r_2{\bf W})\|$. Now, it is clear  that
$\sup_{r\in [0,1)}\|F(r{\bf W})\|=\lim_{r\to 1}\|F(r{\bf W})\|$.
The proof is complete.
\end{proof}

\begin{corollary} \label{norm} If $T\in B(\cK\bigotimes \otimes_{s=1}^k  F^2(H_{n_s}))$, then the following statements are equivalent.
\begin{enumerate}
\item[(i)] $T$ is a multi-Toeplitz operator.
    \item[(ii)]
     $T\in\text{\rm span}\left\{ C\otimes {\bf W}_{\boldsymbol\alpha}{\bf W}_{\boldsymbol\beta}^* :\   C\in B(\cK),   (\boldsymbol \alpha, \boldsymbol \beta)\in \boldsymbol\cJ\right\}^{-\text{\rm SOT}}$.
      \item[(iii)]
     $T\in\text{\rm span}\left\{ C\otimes {\bf W}_{\boldsymbol\alpha}{\bf W}_{\boldsymbol\beta}^* :\   C\in B(\cK),   (\boldsymbol \alpha, \boldsymbol \beta)\in \boldsymbol\cJ\right\}^{-\text{\rm WOT}}$.
     \end{enumerate}
 \end{corollary}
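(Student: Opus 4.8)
The plan is to read this corollary off Theorem~\ref{main}, using the elementary fact that each index set $\boldsymbol\cJ_{\bf s}$ is \emph{finite} (it has $\prod_{i=1}^k n_i^{|s_i|}$ elements, one for each choice of words of the prescribed lengths). Throughout, write $\cS:=\text{\rm span}\{C\otimes{\bf W}_{\boldsymbol\alpha}{\bf W}_{\boldsymbol\beta}^*:C\in B(\cK),\ (\boldsymbol\alpha,\boldsymbol\beta)\in\boldsymbol\cJ\}$, which is a linear subspace.

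For (i)$\Rightarrow$(ii) I would invoke Theorem~\ref{main}: if $T$ is a weighted multi-Toeplitz operator, then $T=\text{\rm SOT-}\lim_{r\to1}F(r{\bf W})$ for a bounded free $k$-pluriharmonic function $F$, and for each fixed $r\in[0,1)$ the series $F(r{\bf W})=\sum_{{\bf s}\in\ZZ^k}q_{\bf s}(r{\bf W},r{\bf W}^*)$ converges in the operator norm topology, where $q_{\bf s}(r{\bf W},r{\bf W}^*)=r^{|s_1|+\cdots+|s_k|}\sum_{(\boldsymbol\alpha,\boldsymbol\beta)\in\boldsymbol\cJ_{\bf s}}A_{(\boldsymbol\alpha,\boldsymbol\beta)}\otimes{\bf W}_{\boldsymbol\alpha}{\bf W}_{\boldsymbol\beta}^*$. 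Since $\boldsymbol\cJ_{\bf s}$ is finite, every finite partial sum $\sum_{{\bf s}\in\Gamma}q_{\bf s}(r{\bf W},r{\bf W}^*)$ (with $\Gamma\subset\ZZ^k$ finite) already lies in $\cS$; letting $\Gamma\uparrow\ZZ^k$ and using that operator-norm convergence implies SOT-convergence, we get $F(r{\bf W})\in\cS^{-\text{\rm SOT}}$ for every $r$. As $\cS^{-\text{\rm SOT}}$ is an SOT-closed subspace, the final limit $T=\text{\rm SOT-}\lim_{r\to1}F(r{\bf W})$ remains in $\cS^{-\text{\rm SOT}}$, which is (ii).

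The implication (ii)$\Rightarrow$(iii) is immediate: every SOT-convergent net is WOT-convergent, so $\cS^{-\text{\rm SOT}}\subseteq\cS^{-\text{\rm WOT}}$. For (iii)$\Rightarrow$(i) I would show that $\cS^{-\text{\rm WOT}}$ consists of weighted multi-Toeplitz operators. Each generator $C\otimes{\bf W}_{\boldsymbol\alpha}{\bf W}_{\boldsymbol\beta}^*$ with $(\boldsymbol\alpha,\boldsymbol\beta)\in\boldsymbol\cJ$ is a weighted multi-Toeplitz operator (immediate from Lemma~\ref{monom1} and Definition~\ref{MT}, taking all other coefficients zero), and the defining identities of Definition~\ref{MT} are linear in $T$, so all of $\cS$ consists of such operators. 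Moreover, via Proposition~\ref{Toep-def2}, those identities impose, for each pair $(\boldsymbol\omega,\boldsymbol\gamma)$ and each $x,y\in\cK$, a closed linear condition on the WOT-continuous functional $T\mapsto\langle T(x\otimes e_{\boldsymbol\gamma}),y\otimes e_{\boldsymbol\omega}\rangle$; hence the set of weighted multi-Toeplitz operators is WOT-closed and therefore contains $\cS^{-\text{\rm WOT}}$. This closes the cycle of implications.

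Since the statement is essentially formal given Theorem~\ref{main}, I do not expect a genuine obstacle. The one point deserving a little care is the iterated limit in (i)$\Rightarrow$(ii)---first the norm-summation over ${\bf s}$, then the SOT-limit in $r$---but this is dispatched uniformly by observing that the single SOT-closed subspace $\cS^{-\text{\rm SOT}}$ already contains all the finite partial sums $\sum_{{\bf s}\in\Gamma}q_{\bf s}(r{\bf W},r{\bf W}^*)$ and is closed under the remaining passages to the limit.
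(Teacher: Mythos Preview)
Your proof is correct and follows essentially the same approach the paper intends: the corollary is stated in the paper without proof, immediately after Theorem~\ref{main}, and your argument spells out precisely the details implicit in that placement. The paper already notes (in the proof of Theorem~\ref{main}, direction (ii)$\Rightarrow$(i)) that the set of weighted multi-Toeplitz operators is WOT-closed, and you correctly ground this in Proposition~\ref{Toep-def2}; likewise, the observation that each $q_{\bf s}({\bf W},{\bf W}^*)$ is a finite sum of generators is exactly what makes the SOT-closure argument go through.
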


 If $ (\boldsymbol \alpha, \boldsymbol \beta)\in {\bf F}_{\bf n}^+\times {\bf F}_{\bf n}^+$, we define its length to be
$| (\boldsymbol \alpha, \boldsymbol \beta)|:=|\alpha_1|+\cdots +|\alpha_k|+|\beta_1|+\cdots +|\beta|_k$. Similarly, if ${\bf s}=(s_1,\ldots, s_k)\in \ZZ^k$, we set $|{\bf s}|:=|s_1|+\cdots +|s_k|$.

\begin{theorem}   \label{Fourier} Let $\{A_{   (\boldsymbol \alpha, \boldsymbol \beta)}\}_{(\boldsymbol \alpha, \boldsymbol \beta)\in \boldsymbol\cJ}$ be  a family of  operators on the Hilbert space $\cK$ and let
$$
\varphi({\bf W}, {\bf W}^*):=\sum_{(\boldsymbol \alpha, \boldsymbol \beta)\in \boldsymbol\cJ}A_{   (\boldsymbol \alpha, \boldsymbol \beta)}\otimes {\bf W}_{\boldsymbol\alpha}{\bf W}_{\boldsymbol\beta}^*=\sum_{{\bf t}\in\ZZ^k}q_{\bf t}({\bf W}, {\bf W}^*)
$$
be a formal series.
Then $\varphi({\bf W}, {\bf W}^*)$ is the Fourier representation of a weighted multi-Toeplitz operator  if and only if,    for each $\boldsymbol\gamma\in {\bf F}_{\bf n}^+$, the series
$$
\sum_{\boldsymbol\omega\in {\bf F}_{\bf n}^+: (\boldsymbol\omega,\boldsymbol\gamma)\in \boldsymbol\cC}
\tau_{(\boldsymbol\omega,\boldsymbol\gamma)}^2 A_{{\bf s}(\boldsymbol\omega,\boldsymbol\gamma)}^*
A_{{\bf s}(\boldsymbol\omega,\boldsymbol\gamma)}\qquad \text{is {\rm WOT}-convergent}
$$
 and
$$\sup_{r\in [0,1)} \sup_{\zeta\in \cP_\cK, \|\zeta\|\leq 1} \|\varphi(r{\bf W}, r{\bf W}^*)\zeta\|<\infty.$$

In this case,
$$
T=\text{\rm SOT-}\lim_{r\to 1} \varphi (r{\bf W}, r{\bf W}^*)\quad \text{and}\quad \|T\|=\sup_{r\in [0,1)}   \|\varphi(r{\bf W}, r{\bf W}^*)\|.
$$
\end{theorem}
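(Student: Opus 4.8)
The plan is to derive the equivalence from Theorem~\ref{multi-homo}, Corollary~\ref{formal2} and Theorem~\ref{main}, using the orthogonality of Lemma~\ref{monom1}; the statement is essentially a repackaging of those results. For the necessity, suppose $\varphi({\bf W},{\bf W}^*)=\varphi_T({\bf W},{\bf W}^*)$ for a weighted multi-Toeplitz operator $T$. By Corollary~\ref{formal2}, $T\zeta=\varphi({\bf W},{\bf W}^*)\zeta$ on $\cP_\cK$, and the computation already carried out in the proof of Theorem~\ref{multi-homo} gives
\begin{equation*}
\|T(x\otimes e_{\boldsymbol\gamma})\|^2=\sum_{\boldsymbol\omega\in{\bf F}_{\bf n}^+:\,(\boldsymbol\omega,\boldsymbol\gamma)\in\boldsymbol\cC}\tau_{(\boldsymbol\omega,\boldsymbol\gamma)}^2\|A_{{\bf s}(\boldsymbol\omega,\boldsymbol\gamma)}x\|^2<\infty ;
\end{equation*}
the partial sums of $\sum_{\boldsymbol\omega}\tau_{(\boldsymbol\omega,\boldsymbol\gamma)}^2A_{{\bf s}(\boldsymbol\omega,\boldsymbol\gamma)}^*A_{{\bf s}(\boldsymbol\omega,\boldsymbol\gamma)}$ then form an increasing sequence of positive operators with uniformly bounded quadratic forms, hence converge in the weak operator topology --- the first stated condition. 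For the second condition and the two displayed identities I would invoke Theorem~\ref{main}: it yields a unique bounded free $k$-pluriharmonic function $F$ on ${\bf D}_{{\bf f},rad}^{\bf m}$ with $F({\bf X})=\sum_{{\bf s}\in\ZZ^k}q_{\bf s}({\bf X},{\bf X}^*)$, with $T=\text{\rm SOT-}\lim_{r\to1}F(r{\bf W})$ and $\|T\|=\sup_{r\in[0,1)}\|F(r{\bf W})\|$; since the coefficients of $F$ are the given $A_{(\boldsymbol\alpha,\boldsymbol\beta)}$, $\|q_{\bf s}({\bf W},{\bf W}^*)\|\le\|T\|$ and $\sum_{{\bf s}\in\ZZ^k}r^{|{\bf s}|}<\infty$, we obtain $F(r{\bf W})=\varphi(r{\bf W},r{\bf W}^*)$ with norm-convergent series, so the second condition holds with supremum $\le\|T\|$, and $T=\text{\rm SOT-}\lim_{r\to1}\varphi(r{\bf W},r{\bf W}^*)$, $\|T\|=\sup_{r\in[0,1)}\|\varphi(r{\bf W},r{\bf W}^*)\|$.

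For the sufficiency, assume the two conditions. First I would check that $\varphi({\bf W},{\bf W}^*)$ is a well-defined linear map on $\cP_\cK$: for $x\in\cK$ and $\boldsymbol\gamma\in{\bf F}_{\bf n}^+$, Lemma~\ref{monom1} says $\{{\bf W}_{\boldsymbol\alpha}{\bf W}_{\boldsymbol\beta}^*e_{\boldsymbol\gamma}\}_{(\boldsymbol\alpha,\boldsymbol\beta)\in\boldsymbol\cJ}$ are pairwise orthogonal with $\|{\bf W}_{\boldsymbol\alpha}{\bf W}_{\boldsymbol\beta}^*e_{\boldsymbol\gamma}\|^2=\sum_{\boldsymbol\omega:\,(\boldsymbol\omega,\boldsymbol\gamma)\in\boldsymbol\cC,\,{\bf s}(\boldsymbol\omega,\boldsymbol\gamma)=(\boldsymbol\alpha,\boldsymbol\beta)}\tau_{(\boldsymbol\omega,\boldsymbol\gamma)}^2$, so the partial sums of $\sum_{(\boldsymbol\alpha,\boldsymbol\beta)}A_{(\boldsymbol\alpha,\boldsymbol\beta)}x\otimes{\bf W}_{\boldsymbol\alpha}{\bf W}_{\boldsymbol\beta}^*e_{\boldsymbol\gamma}$ have squared norm bounded by $\langle(\sum_{\boldsymbol\omega}\tau_{(\boldsymbol\omega,\boldsymbol\gamma)}^2A_{{\bf s}(\boldsymbol\omega,\boldsymbol\gamma)}^*A_{{\bf s}(\boldsymbol\omega,\boldsymbol\gamma)})x,x\rangle<\infty$. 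Grouping by $\boldsymbol\cJ=\cup_{{\bf t}\in\ZZ^k}\boldsymbol\cJ_{\bf t}$ and using that on a spectral subspace $\cK\otimes\cE_{p_1,\ldots,p_k}$ the vectors $q_{\bf t}({\bf W},{\bf W}^*)\zeta$ lie in the mutually orthogonal subspaces $\cK\otimes\cE_{t_1+p_1,\ldots,t_k+p_k}$, the same argument shows $\varphi(r{\bf W},r{\bf W}^*)\zeta=\sum_{{\bf t}\in\ZZ^k}r^{|{\bf t}|}q_{\bf t}({\bf W},{\bf W}^*)\zeta$ converges for every $\zeta\in\cP_\cK$ and $\|\varphi(r{\bf W},r{\bf W}^*)\zeta-\varphi({\bf W},{\bf W}^*)\zeta\|\to0$ as $r\to1$, by dominated convergence for orthogonal series.

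Combining this limit with the second condition, say with finite supremum $M$, yields $\|\varphi({\bf W},{\bf W}^*)\zeta\|=\lim_{r\to1}\|\varphi(r{\bf W},r{\bf W}^*)\zeta\|\le M\|\zeta\|$ on $\cP_\cK$, so $\varphi({\bf W},{\bf W}^*)$ extends to a bounded operator $T$ with $T\zeta=\varphi({\bf W},{\bf W}^*)\zeta$ on $\cP_\cK$. Then $T$ is weighted multi-Toeplitz, since for $x,y\in\cK$ and $\boldsymbol\omega,\boldsymbol\gamma\in{\bf F}_{\bf n}^+$,
\begin{equation*}
\langle T(x\otimes e_{\boldsymbol\gamma}),y\otimes e_{\boldsymbol\omega}\rangle=\sum_{(\boldsymbol\alpha,\boldsymbol\beta)\in\boldsymbol\cJ}\langle A_{(\boldsymbol\alpha,\boldsymbol\beta)}x,y\rangle\,\langle{\bf W}_{\boldsymbol\alpha}{\bf W}_{\boldsymbol\beta}^*e_{\boldsymbol\gamma},e_{\boldsymbol\omega}\rangle ,
\end{equation*}
which, by Lemma~\ref{monom1}, equals $\tau_{(\boldsymbol\omega,\boldsymbol\gamma)}\langle A_{{\bf s}(\boldsymbol\omega,\boldsymbol\gamma)}x,y\rangle$ when $(\boldsymbol\omega,\boldsymbol\gamma)\in\boldsymbol\cC$ and $0$ otherwise --- exactly Definition~\ref{MT}; Corollary~\ref{formal2} then identifies $\varphi({\bf W},{\bf W}^*)$ as the Fourier representation of $T$. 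The ``in this case'' assertions follow by feeding $T$ into Theorem~\ref{main}, which produces the bounded free $k$-pluriharmonic $F$ with $F(r{\bf W})=\varphi(r{\bf W},r{\bf W}^*)$, hence $T=\text{\rm SOT-}\lim_{r\to1}\varphi(r{\bf W},r{\bf W}^*)$ and $\|T\|=\sup_{r\in[0,1)}\|\varphi(r{\bf W},r{\bf W}^*)\|$. I expect the step requiring the most care to be the transfer of the uniform bound on the radial truncations to $\varphi({\bf W},{\bf W}^*)$ itself --- establishing $\varphi(r{\bf W},r{\bf W}^*)\zeta\to\varphi({\bf W},{\bf W}^*)\zeta$ on $\cP_\cK$ from the first (row-summability) hypothesis alone --- since this is the one place where the two hypotheses genuinely interact; everything else is bookkeeping with Lemma~\ref{monom1} and direct citations of Theorems~\ref{multi-homo} and~\ref{main}.
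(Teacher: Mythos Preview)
Your proposal is correct and follows essentially the same route as the paper: both directions lean on Lemma~\ref{monom1} for the orthogonality bookkeeping, on Corollary~\ref{formal2} for identifying the Fourier representation, and on Theorem~\ref{main} for the radial SOT-limit and the norm identity. The only noteworthy difference is in the sufficiency direction: you verify directly via Definition~\ref{MT} that the bounded extension $T$ of $\varphi({\bf W},{\bf W}^*)|_{\cP_\cK}$ is weighted multi-Toeplitz, whereas the paper instead extends each $\varphi(r{\bf W},r{\bf W}^*)|_{\cP_\cK}$ to a bounded operator $D_r$, checks that $D_r$ is multi-Toeplitz, and then passes to $T=\text{SOT-}\lim_{r\to1}D_r$ using that the class of weighted multi-Toeplitz operators is WOT-closed. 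Your direct verification is slightly cleaner; the paper's detour through $D_r$ has the mild advantage that it makes the identification $F(r{\bf W})=\varphi(r{\bf W},r{\bf W}^*)$ automatic without a separate appeal to uniqueness in Theorem~\ref{main}.
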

\begin{proof}
Assume that $\varphi({\bf W}, {\bf W}^*)$ is the  Fourier representation of a weighted multi-Toeplitz operator  $T$.
Then $
T\zeta=\varphi_T({\bf W}, {\bf W}^*)\zeta$ for any  $\zeta\in \cP_K$.
Note that  the multi-homogeneous parts of $T$, i.e  $\{T_{\bf s}\}_{{\bf s}=(s_1,\ldots, s_k)\in \ZZ^k}$ satisfy the following relations:
 \begin{equation*}
 \begin{split}
 T_{\bf s} f&=(I_\cK\otimes {\bf P}_{s_1+p_1+\cdots +s_k+p_k})Tf\\
 &=(I_\cK\otimes {\bf P}_{s_1+p_1+\cdots +s_k+p_k})\varphi({\bf W}, {\bf W}^*)f\\
 &=  (I_\cK\otimes {\bf P}_{s_1+p_1+\cdots +s_k+p_k})
    \sum_{{\bf t}\in \ZZ^k}\sum_{(\boldsymbol \alpha, \boldsymbol \beta)\in \boldsymbol\cJ_{\bf t}}(A_{   (\boldsymbol \alpha, \boldsymbol \beta)}\otimes {\bf W}_{\boldsymbol\alpha}{\bf W}_{\boldsymbol\beta}^*)f
 \end{split}
 \end{equation*}
 for any $f\in \cK\otimes \cE_{p_1,\ldots, p_k}$. Consequently, taking into account that
 $$
 \sum_{(\boldsymbol \alpha, \boldsymbol \beta)\in \boldsymbol\cJ_{\bf t}}(A_{   (\boldsymbol \alpha, \boldsymbol \beta)}\otimes {\bf W}_{\boldsymbol\alpha}{\bf W}_{\boldsymbol\beta}^*)f\in \cK\otimes \cE_{t_1+p_1,\ldots,t_k+ p_k},
 $$
  we conclude that
  $$
  T_{\bf s} f=\sum_{(\boldsymbol \alpha, \boldsymbol \beta)\in \boldsymbol\cJ_{\bf s}}(A_{   (\boldsymbol \alpha, \boldsymbol \beta)}\otimes {\bf W}_{\boldsymbol\alpha}{\bf W}_{\boldsymbol\beta}^*)f,\qquad  f\in \cK\otimes \cE_{p_1,\ldots, p_k}.
  $$
  Since
$$
 \otimes_{s=1}^k F^2(H_{n_s})=\bigoplus_{(p_1,\ldots, p_k)\in \ZZ^k} \cE_{p_1,\ldots, p_k},
 $$
 we deduce that $T_{\bf s}=\sum_{(\boldsymbol \alpha, \boldsymbol \beta)\in \boldsymbol\cJ_{\bf s}}A_{   (\boldsymbol \alpha, \boldsymbol \beta)}\otimes {\bf W}_{\boldsymbol\alpha}
 {\bf W}_{\boldsymbol\beta}^*=q_{\bf s}({\bf W}, {\bf W}^*)$ for any ${\bf s}=(s_1,\ldots, s_k)\in \ZZ^+$.

According to the proof of Theorem \ref{main}, we have
$\|T_{\bf s}\|\leq \|T\|$ for any ${\bf s}\in \ZZ^+$ and, as a consequence, the series
$\sum_{{\bf s}=(s_1,\ldots, s_k)\in \ZZ^+}r^{|s_1|+\cdots +|s_k|} T_{\bf s}$ is convergent in the operator norm topology.
Moreover, according to inequality \eqref{VN}, we have
\begin{equation*}
\left\|\sum_{{\bf s}\in \ZZ^k}q_{\bf s}(r{\bf W}, r{\bf W}^*)\right\|\leq \|T\|,\qquad r\in[0,1),
\end{equation*}
which implies
$\sup_{r\in [0,1)}\|\varphi (r{\bf W}, r{\bf W}^*)\zeta\|<\infty$ for any $\zeta\in \cP_\cK$.
 Moreover, in the proof of Theorem \ref{main}, we also proved that
$
T=\text{\rm SOT-}\lim_{r\to 1} \varphi (r{\bf W}, r{\bf W}^*).
$
On the other hand, Theorem \ref{main} shows that  $\|T\|=\sup_{r\in [0,1)}\|\varphi (r{\bf W}, r{\bf W}^*)\|$.
Consequently,
$\sup_{r\in [0,1)}\|\varphi (r{\bf W}, r{\bf W}^*)(h\otimes e_\gamma)\|<\infty$ for each $\gamma\in {\bf F}_{\bf n}^+$ and $h\in \cK$. Since
$$
\varphi (r{\bf W}, r{\bf W}^*)(x\otimes e_\gamma)
=
\sum_{\boldsymbol\omega\in {\bf F}_{\bf n}^+: (\boldsymbol\omega,\boldsymbol\gamma)\in \boldsymbol\cC} r^{2|{\bf s}(\boldsymbol \omega, \boldsymbol\gamma)|}
\tau_{(\boldsymbol\omega,\boldsymbol\gamma)}^2\|
\|A_{{\bf s}(\boldsymbol\omega,\boldsymbol\gamma)}x\|^2,
$$
we deduce that
$$
\sum_{\boldsymbol\omega\in {\bf F}_{\bf n}^+: (\boldsymbol\omega,\boldsymbol\gamma)\in \boldsymbol\cC}
\tau_{(\boldsymbol\omega,\boldsymbol\gamma)}^2 A_{{\bf s}(\boldsymbol\omega,\boldsymbol\gamma)}^*
A_{{\bf s}(\boldsymbol\omega,\boldsymbol\gamma)}\qquad \text{is {\rm WOT}-convergent}.
$$
Due to the fact that
$$\sup_{r\in [0,1)} \sup_{\zeta\in \cP_\cK, \|\zeta\|\leq 1} \|\varphi(r{\bf W}, r{\bf W}^*)\zeta\|
=\sup_{r\in [0,1)}\|\varphi (r{\bf W}, r{\bf W}^*)\|
<\infty,$$
the direct implication is proved.

Now, we prove the converse.   Assume that, for each $\boldsymbol\gamma\in {\bf F}_{\bf n}^+$, the series
$$
\sum_{\boldsymbol\omega\in {\bf F}_{\bf n}^+: (\boldsymbol\omega,\boldsymbol\gamma)\in \boldsymbol\cC}
\tau_{(\boldsymbol\omega,\boldsymbol\gamma)}^2 A_{{\bf s}(\boldsymbol\omega,\boldsymbol\gamma)}^*
A_{{\bf s}(\boldsymbol\omega,\boldsymbol\gamma)}\qquad \text{is {\rm WOT}-convergent}
$$
 and
$$\sup_{r\in [0,1)} \sup_{\zeta\in \cP_\cK, \|\zeta\|\leq 1} \|\varphi(r{\bf W}, r{\bf W}^*)\zeta\|<\infty.$$
As in the proof of Theorem \ref{multi-homo}, it is easy to see that, for any $h\in\cK$,
$$
 \sum_{\boldsymbol\omega\in \cF: (\boldsymbol\omega, \boldsymbol\gamma)\in \boldsymbol\cC}
 \tau_{(\boldsymbol\omega,\boldsymbol\gamma)}A_{{\bf s}(\boldsymbol \omega, \boldsymbol\gamma)}h \otimes e_{\boldsymbol\omega }
$$
is a vector in $\cK\bigotimes \otimes_{i=1}^k F^2(H_{n_i})$ and, therefore, so are  $ \varphi ({\bf W}, {\bf W}^*)\zeta$ and $ \varphi (r{\bf W}, r{\bf W}^*)\zeta$ for any $\zeta\in \cP_\cK$, $r\in [0,1)$.
Hence, we deduce that
\begin{equation}
\label{frr}
 \lim_{r\to 1}\varphi (r{\bf W}, r{\bf W}^*)\zeta= \varphi ({\bf W}, {\bf W}^*)\zeta,\qquad \zeta\in \cP_\cK.
\end{equation}
Using the fact that, for each $r\in [0,1)$,
 \begin{equation*}
 \sup_{\zeta\in\cP_\cK, \|\zeta\|\leq 1}\|\varphi (r{\bf W}, r{\bf W}^*)\zeta\|<\infty,
\end{equation*}
we conclude that there is a bounded linear operator $D_r\in B(\cK\bigotimes \otimes_{i=1}^k F^2(H_{n_i}))$ such that
\begin{equation}
\label{Trf}
  D_r \zeta=\varphi (r{\bf W}, r{\bf W}^*)\zeta,\qquad \zeta\in \cP_\cK.
\end{equation}
Note that, for any $ \boldsymbol\omega, \boldsymbol\gamma \in {\bf F}_{n}^+$ and $h,\ell\in \cK$,
\begin{equation*}
\begin{split}
\left<T(h\otimes e_{\boldsymbol\gamma }), \ell\otimes e_{\boldsymbol\omega} \right>
&=
\left<\varphi (r{\bf W}, r{\bf W}^*)(h\otimes e_{\boldsymbol\gamma }), \ell\otimes e_{\boldsymbol\omega} \right>\\
&=\begin{cases}
\tau_{(\boldsymbol\omega,\boldsymbol\gamma)}\left<r^{|{\bf s}(\boldsymbol \omega, \boldsymbol\gamma)|}A_{{\bf s}(\boldsymbol \omega, \boldsymbol\gamma)}h,\ell\right>, & \text{ if  } (\boldsymbol \omega, \boldsymbol\gamma)\in \boldsymbol\cC,\\
0,&  \text{ if  } (\boldsymbol \omega, \boldsymbol\gamma)\in ({\bf F}_{\bf n }^+\times {\bf F}_{\bf n }^+)\backslash \boldsymbol\cC.
\end{cases}
\end{split}
\end{equation*}
Consequently, $D_r$ is a weighted multi-Toeplitz operator.
Now, note that due to relation \eqref{frr} and the fact that  $\sup_{ r\in[0,1)} \sup_{\zeta\in\cP_\cK, \|\zeta\|\leq 1}\|\varphi (r{\bf W}, r{\bf W}^*)\zeta\|<\infty$, we deduce that
$$\sup_{\zeta\in \cP_\cK, \|\zeta\|\leq 1}\|\varphi ({\bf W}, {\bf W}^*)\zeta\|<\infty.
$$
Consequently,  there is a bounded linear operator $T$ on
$\cK\bigotimes \otimes_{i=1}^k F^2(H_{n_i})$ such that $T\zeta=
\varphi ({\bf W}, {\bf W}^*)\zeta $ for any $\zeta\in \cP_\cK$.
Now, it is clear that
$$
\lim_{r\to 1}D_r \zeta=\lim_{r\to 1} \varphi (r{\bf W}, r{\bf W}^*)\zeta=
\varphi ({\bf W}, {\bf W}^*)\zeta=T\zeta
$$
for any $\zeta\in \cP_\cK$ and  $\sup_{r\in [0,1)}\|D_r\|<\infty$. This implies $T=\text{\rm SOT-}\lim_{r\to 1}D_r$.  Since $D_r$ is a weighted multi-Toeplitz operator and the set of all weighted multi-Toeplitz operators is WOT-closed, we  deduce that    $T$ is a  weighted multi-Toeplitz operator.
Since $T\zeta=\varphi ({\bf W}, {\bf W}^*)\zeta$ for any $\zeta\in \cP_\cK$, Corollary \ref{formal2} shows that
$ \varphi ({\bf W}, {\bf W}^*)$ is the formal Fourier representation of $T$.
 The proof is complete.
\end{proof}

\bigskip

\section{Free $k$-pluriharmonic functions on polydomains}

The results for the previous sections are used to prove that the   bounded free $k$-pluriharmonic  functions on the radial polydomain ${\bf D}_{{\bf f},rad}^{\bf m}$
  are precisely those  that are noncommutative Berezin transforms of the weighted multi-Toeplitz operators. In this setting, we solve the Dirichlet extension problem.

Denote by  ${\bf PH}_\cK^\infty({\bf D}_{{\bf f},rad}^{\bf m})$  the set of all bounded free
 $k$-pluriharmonic functions on the radial  polydomain  ${\bf D}_{{\bf n},rad}^{\bf m}$ with coefficients in
$B(\cK)$.
We define the norms $\|\cdot
\|_m:M_m\left({\bf PH}_\cK^\infty({\bf D}_{{\bf f},rad}^{\bf m})\right)\to [0,\infty)$, $m\in \NN$,  by
setting
$$
\|[F_{ij}]_m\|_m:= \sup \|[F_{ij}({\bf X})]_m\|,
$$
where the supremum is taken over all elements ${\bf X}\in {\bf D}_{{\bf f},rad}^{\bf m}(\cH)$ and any Hilbert space $\cH$. It is easy to see that the norms
$\|\cdot\|_m$, $m\in\NN$, determine  an operator space
structure  on ${\bf PH}_\cK^\infty({\bf D}_{{\bf f},rad}^{\bf m})$,
 in the sense of Ruan (see e.g. \cite{ER}).

The   {\it extended noncommutative Berezin transform at} ${\bf X}\in {\bf D}_{{\bf f},rad}^{\bf m}(\cH)$
 is the map
 $$\widetilde{\bf B}_{\bf X}: B(\cK \bigotimes  \otimes_{i=1}F^2(H_{n_i}))\to B(\cK)\otimes_{min} B(\cH)
 $$
 defined by
 \begin{equation*}
 \widetilde{\bf B}_{\bf X}[g]:= \left(I_\cK\otimes {\bf K}_{\bf X} ^*\right) (g\otimes I_\cH)\left(I_\cK\otimes{\bf K}_{\bf X}\right),
 \quad g\in B(\cK\bigotimes \otimes_{i=1}^k F^2(H_{n_i})),
 \end{equation*}
where  ${\bf K}_{\bf X}:\cH \to \left(\otimes_{i=1}^kF^2(H_{n_i})\right)\otimes
\cH$  is noncommutative Berezin kernel associated with ${\bf X}\in {\bf D}_{{\bf f},rad}^{\bf m}(\cH)$.

Throughout this section, we assume that $\cH$ is a separable infinitely dimensional Hilbert space. Consequently, one can identify any free $k$-pluriharmonic function with its representation on $\cH$.
Let  $\boldsymbol{\cT }$ be the set of all  of all weighted multi-Toeplitz operators  on $\cK\otimes\bigotimes_{i=1}^k F^2(H_{n_i})$.
As in \cite{Po-Toeplitz-poly-hyperball},  one can proves the following characterization of
bounded  free $k$-pluriharmonic  functions on the radial polydomain ${\bf D}_{{\bf f},rad}^{\bf m}$.

 \begin{theorem}\label{bounded}
  If $F: {\bf D}_{{\bf f},rad}^{\bf m}(\cH)\to B(\cK)\otimes_{min}B(\cH)$, then the following statements are equivalent.
\begin{enumerate}
\item[(i)] $F$ is a bounded free $k$-pluriharmonic function.
\item[(ii)]
There exists $T\in \boldsymbol{\cT }$ such
that
$$F({\bf X})=  \widetilde{\bf B}_{\bf X}[T], \qquad
{\bf X}\in {\bf D}_{{\bf f},rad}^{\bf m}(\cH).
$$
\end{enumerate}
In this case,
  $T=\text{\rm SOT-}\lim\limits_{r\to 1}F(r{\bf W}).
  $
   Moreover, the map
$$
\Phi:{\bf PH}_\cK^\infty({\bf D}_{{\bf f},rad}^{\bf m})\to \boldsymbol{\cT}\quad
\text{ defined by } \quad \Phi(F):=T
$$ is a completely   isometric isomorphism of operator spaces.
\end{theorem}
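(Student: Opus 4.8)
The plan is to reduce the statement to Theorem \ref{main} and the reproducing property of the noncommutative Berezin transform, following the template of the poly-hyperball case in \cite{Po-Toeplitz-poly-hyperball}. The key preliminary computation is that $\widetilde{\bf B}_{\bf X}$ reproduces pluriharmonic monomials: from property (iii) of the Berezin kernel one has ${\bf K}_{\bf X}X_{i,j}^{*}=({\bf W}_{i,j}^{*}\otimes I){\bf K}_{\bf X}$, hence ${\bf K}_{\bf X}{\bf X}_{\boldsymbol\beta}^{*}=({\bf W}_{\boldsymbol\beta}^{*}\otimes I){\bf K}_{\bf X}$ for every $\boldsymbol\beta\in{\bf F}_{\bf n}^{+}$, and taking adjoints gives $\widetilde{\bf B}_{\bf X}[A\otimes{\bf W}_{\boldsymbol\alpha}{\bf W}_{\boldsymbol\beta}^{*}]=A\otimes{\bf X}_{\boldsymbol\alpha}{\bf X}_{\boldsymbol\beta}^{*}$ for all $A\in B(\cK)$, $(\boldsymbol\alpha,\boldsymbol\beta)\in\boldsymbol\cJ$ and every ${\bf X}\in{\bf D}_{{\bf f},rad}^{\bf m}(\cH)$. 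For such ${\bf X}$ the tuple is pure, so ${\bf K}_{\bf X}$ is an isometry by property (ii) and $\widetilde{\bf B}_{\bf X}$ is a complete contraction which is WOT-continuous on norm-bounded sets.

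For (i)$\Rightarrow$(ii), given a bounded free $k$-pluriharmonic $F({\bf X})=\sum_{{\bf s}\in\ZZ^{k}}q_{\bf s}({\bf X},{\bf X}^{*})$ on the radial polydomain, Theorem \ref{main} yields $T:=\text{\rm SOT-}\lim_{r\to1}F(r{\bf W})\in\boldsymbol{\cT}$ together with the uniform bound $\|q_{\bf s}\|\le\|T\|$; this bound makes $\widetilde{\bf B}_{\bf X}$ applicable term by term to $F(r{\bf W})=\sum_{\bf s}q_{\bf s}(r{\bf W},r{\bf W}^{*})$, giving $\widetilde{\bf B}_{\bf X}[F(r{\bf W})]=\sum_{\bf s}q_{\bf s}(r{\bf X},r{\bf X}^{*})=F(r{\bf X})$ for each $r\in[0,1)$. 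Letting $r\to1$, WOT-continuity of $\widetilde{\bf B}_{\bf X}$ on the bounded set $\{F(r{\bf W})\}_{r}$ and norm-continuity of $F$ along the ray $\{r{\bf X}\}_{r}$ in the radial polydomain give $\widetilde{\bf B}_{\bf X}[T]=F({\bf X})$. Conversely, for (ii)$\Rightarrow$(i), given $T\in\boldsymbol{\cT}$, Theorem \ref{main} provides the unique bounded free $k$-pluriharmonic $G$ with $T=\text{\rm SOT-}\lim_{r\to1}G(r{\bf W})$; the computation just made shows $\widetilde{\bf B}_{\bf X}[T]=G({\bf X})$, so $F=G$ is bounded and free $k$-pluriharmonic. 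The identity $T=\text{\rm SOT-}\lim_{r\to1}F(r{\bf W})$ is then read off from Theorem \ref{main}.

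It remains to study $\Phi(F):=\text{\rm SOT-}\lim_{r\to1}F(r{\bf W})$. Linearity is clear. The uniqueness clause of Theorem \ref{main} gives injectivity of $\Phi$, and its ``if'' direction shows that every $T\in\boldsymbol{\cT}$ equals $\Phi(F)$ for some bounded free $k$-pluriharmonic $F$, so $\Phi$ is a linear bijection onto $\boldsymbol{\cT}$. For isometry, Theorem \ref{main} gives $\|\Phi(F)\|=\sup_{r\in[0,1)}\|F(r{\bf W})\|\le\sup_{{\bf X}\in{\bf D}_{{\bf f},rad}^{\bf m}(\cH)}\|F({\bf X})\|=\|F\|$, while $\|F({\bf X})\|=\|\widetilde{\bf B}_{\bf X}[\Phi(F)]\|\le\|\Phi(F)\|$ because ${\bf K}_{\bf X}$ is a contraction, so $\|\Phi(F)\|=\|F\|$. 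To upgrade to a complete isometry I would run the same argument with $\cK$ replaced by $\CC^{m}\otimes\cK$: an $m\times m$ matrix $[F_{ij}]$ over ${\bf PH}_\cK^{\infty}({\bf D}_{{\bf f},rad}^{\bf m})$ is a bounded free $k$-pluriharmonic function with coefficients in $M_{m}(B(\cK))=B(\CC^{m}\otimes\cK)$, whose image is the matrix $[\Phi(F_{ij})]$ viewed as a single weighted multi-Toeplitz operator over $\CC^{m}\otimes\cK$; applying the scalar identity at that level yields $\|[\Phi(F_{ij})]\|=\|[F_{ij}]\|_{m}$.

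The step I expect to be the main obstacle is the interchange of $\widetilde{\bf B}_{\bf X}$ with the Fourier series of $T$ and with the limit $r\to1$: the Fourier series of $T$ converges only in WOT, so one must exploit the uniform estimate $\|q_{\bf s}\|\le\|T\|$, the isometry property of the Berezin kernel of a pure tuple, and a careful double-limit argument to conclude that $\widetilde{\bf B}_{\bf X}[T]$ genuinely coincides with the norm-convergent pluriharmonic series $F({\bf X})$ for \emph{every} ${\bf X}$ in the radial polydomain, not merely for ${\bf X}=\rho{\bf W}$ where the corresponding identity was already established in the proof of Theorem \ref{main}.
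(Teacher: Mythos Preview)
Your proposal is correct and follows essentially the same approach as the paper: reduce both implications to Theorem \ref{main}, use the intertwining relation ${\bf K}_{\bf X}{\bf X}_{\boldsymbol\beta}^{*}=({\bf W}_{\boldsymbol\beta}^{*}\otimes I){\bf K}_{\bf X}$ to obtain $\widetilde{\bf B}_{\bf X}[F(r{\bf W})]=F(r{\bf X})$, and then pass to the limit $r\to 1$ using purity of radial tuples and SOT/WOT-continuity of the Berezin transform on bounded sets. Your treatment of the completely isometric isomorphism is in fact more explicit than the paper's, which simply cites the analogous proof in \cite{Po-Toeplitz-poly-hyperball}; the only cosmetic difference is that for (ii)$\Rightarrow$(i) the paper invokes Theorem \ref{Fourier} to produce the pluriharmonic function from the Fourier representation of $T$, whereas you invoke Theorem \ref{main} directly, but these are equivalent routes.
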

\begin{proof}  If item (i)  holds,  Theorem \ref{main} shows that $T:=\text{\rm SOT-}\lim\limits_{r\to 1}F(r{\bf W})
  $
is a weighted multi-Toeplitz operator and $\|T\|=\sup_{r\in [0,1)}\|F(r{\bf W})\|$. On the other hand, we have
$$
F(r{\bf X})= \widetilde{\bf B}_{\bf X}[F(r{\bf W})], \qquad
{\bf X}\in {\bf D}_{{\bf f},rad}^{\bf m}(\cH).
$$
Taking $r\to 1$, we deduce that $F({\bf X})=  \widetilde{\bf B}_{\bf X}[T]$ for
${\bf X}\in {\bf D}_{{\bf f},rad}^{\bf m}(\cH).
$

If item (ii) holds, then $T$ is a weighted multi-Toeplitz operator. If $\varphi({\bf W},{\bf W}^*)=\sum_{{\bf s}\in \ZZ^k}
q_{\bf s}({\bf W},{\bf W}^*)$ is its Fourier representation, Theorem \ref{Fourier} shows that
$\varphi(r{\bf W},r{\bf W}^*)=\sum_{{\bf s}\in \ZZ^k}
q_{\bf s}(r{\bf W},r{\bf W}^*)$, $r\in [0,1)$, is convergent in the operator norm topology, $\sup_{r\in[0,1)}\|\varphi(r{\bf W},r{\bf W}^*)\|=\|T\|$
and $T=\text{\rm SOT-}\lim\limits_{r\to 1}F(r{\bf W}).
  $
Therefore, $F({\bf X}):=\varphi({\bf X},{\bf X})^*$, ${\bf X}\in {\bf D}_{{\bf f},rad}^{\bf m}(\cH)$, is a bounded free $k$-pluriharmonic function. The proof of the  last part of  the theorem is exactly the same as the one from Theorem 4.1 from \cite{Po-Toeplitz-poly-hyperball}. We omit it.
\end{proof}

We denote by ${\bf PH}_\cK^c({\bf D}_{{\bf f},rad}^{\bf m}) $ the set of all
  free $k$-pluriharmonic functions on the radial polydomain ${\bf D}_{{\bf f},rad}^{\bf m}$ with operator-valued coefficients in $B(\cK)$, which
 have continuous extensions   (in the operator norm topology) to
  ${\bf D_f^m}(\cH)$, for any Hilbert space $\cH$.
Here is the analogue of  the Dirichlet extension problem for polydomains.

\begin{theorem}\label{Dirichlet}  If $F:{\bf D}_{{\bf f},rad}^{\bf m}(\cH)\to B(\cK)\otimes_{min} B( \cH)$, then
 the following statements are equivalent.
\begin{enumerate}
\item[(i)] $F$ is a free $k$-pluriharmonic function on the radial polydomain ${\bf D}_{{\bf f},rad}^{\bf m}$
such that \ $F(r{\bf W})$ converges in the operator norm
topology, as $r\to 1$.

\item[(ii)]
There exists $T\in {\boldsymbol\cG}:=\text{\rm span}\left\{ C\otimes {\bf W}_{\boldsymbol\alpha}{\bf W}_{\boldsymbol\beta}^* :\   C\in B(\cK),   (\boldsymbol \alpha; \boldsymbol \beta)\in \boldsymbol\cJ\right\}^{\|\cdot\|}$ such
that $$F({\bf X})= \widetilde{\bf B}_{\bf X}[T], \qquad
 {\bf X}\in {\bf D}_{{\bf f},rad}^{\bf m}(\cH).
  $$
   \item[(iii)] $F$ is a free $k$-pluriharmonic function on the radial polydomain     ${\bf D}_{{\bf f},rad}^{\bf m}(\cH)$ which
 has a continuous extension  (in the operator norm topology) to the   polydomain
  ${\bf D}_{{\bf f}}^{\bf m}(\cH)$.

\end{enumerate}
In this case, $T=\lim\limits_{r\to 1}F(r{\bf W})$, where
the convergence is in the operator norm. Moreover, the map
$$
\Phi:{\bf PH}_\cK^c({\bf D}_{{\bf f}}^{\bf m} ) \to \boldsymbol\cG
 \quad \text{ defined
by } \quad \Phi(F):=T
$$ is a  completely   isometric isomorphism of
operator spaces.
\end{theorem}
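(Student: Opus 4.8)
The plan is to prove the cycle of implications (i)$\Rightarrow$(ii)$\Rightarrow$(iii)$\Rightarrow$(i), reducing everything to Theorem \ref{main}, Theorem \ref{Fourier}, Theorem \ref{bounded}, and the three structural properties of the noncommutative Berezin kernel. A preliminary observation used throughout is that $\boldsymbol\cG\subseteq\boldsymbol\cT$: every finite sum $\sum_j C_j\otimes{\bf W}_{\boldsymbol\alpha_j}{\bf W}_{\boldsymbol\beta_j}^*$ with $(\boldsymbol\alpha_j,\boldsymbol\beta_j)\in\boldsymbol\cJ$ is a weighted multi-Toeplitz operator, and $\boldsymbol\cT$ is WOT-closed by Corollary \ref{norm}, hence norm-closed, so the norm closure $\boldsymbol\cG$ of their span lies in $\boldsymbol\cT$. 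In particular, for every $T\in\boldsymbol\cG$, Theorem \ref{bounded} already tells us that ${\bf X}\mapsto\widetilde{\bf B}_{\bf X}[T]$ is a bounded free $k$-pluriharmonic function on ${\bf D}_{{\bf f},rad}^{\bf m}$; moreover, since radial elements are pure, one has ${\bf K}_{\bf X}^*{\bf K}_{\bf X}=I$ on ${\bf D}_{{\bf f},rad}^{\bf m}(\cH)$, so for $T$ in the span of the generators the transform $\widetilde{\bf B}_{\bf X}\bigl[\sum_jC_j\otimes{\bf W}_{\boldsymbol\alpha_j}{\bf W}_{\boldsymbol\beta_j}^*\bigr]$ equals the noncommutative polynomial $\sum_jC_j\otimes{\bf X}_{\boldsymbol\alpha_j}{\bf X}_{\boldsymbol\beta_j}^*$.

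For (i)$\Rightarrow$(ii) I would write $F(r{\bf W})=\sum_{{\bf s}\in\ZZ^k}q_{\bf s}(r{\bf W},r{\bf W}^*)=\sum_{{\bf s}\in\ZZ^k}r^{|{\bf s}|}q_{\bf s}({\bf W},{\bf W}^*)$, the series converging in the operator norm for each $r\in[0,1)$; since each index set $\boldsymbol\cJ_{\bf s}$ is finite, $q_{\bf s}({\bf W},{\bf W}^*)$ lies in the span defining $\boldsymbol\cG$, so $F(r{\bf W})\in\boldsymbol\cG$ for every $r$, and by hypothesis $F(r{\bf W})$ converges in norm as $r\to1$ to an operator $T$ which, $\boldsymbol\cG$ being norm-closed, belongs to $\boldsymbol\cG$. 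To identify $F$ with the Berezin transform of $T$, I invoke the intertwining already used in the proofs of Theorems \ref{main} and \ref{bounded}, namely $F(r{\bf X})=\widetilde{\bf B}_{\bf X}[F(r{\bf W})]$ for ${\bf X}\in{\bf D}_{{\bf f},rad}^{\bf m}(\cH)$ and $r\in[0,1)$, and let $r\to1$: the left side converges in norm to $F({\bf X})$ because the defining series of $F$ is norm-continuous in the radial parameter on the compact set $\{t{\bf X}:t\in[0,1]\}$, and the right side converges in norm to $\widetilde{\bf B}_{\bf X}[T]$ because $\|\widetilde{\bf B}_{\bf X}[g]\|\le\|g\|$. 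This yields $F({\bf X})=\widetilde{\bf B}_{\bf X}[T]$, and along the way records $T=\lim_{r\to1}F(r{\bf W})$ in operator norm.

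For (ii)$\Rightarrow$(iii): given (ii), $F$ restricted to ${\bf D}_{{\bf f},rad}^{\bf m}$ is free $k$-pluriharmonic by the preliminary remark, so it only remains to exhibit a norm-continuous extension to ${\bf D}_{\bf f}^{\bf m}(\cH)$. Pick $T_p$ in the span with $\|T_p-T\|\to0$ and form the noncommutative polynomials $G_p({\bf X}):=\sum_j C^{(p)}_j\otimes{\bf X}_{\boldsymbol\alpha_j}{\bf X}_{\boldsymbol\beta_j}^*$, which are norm-continuous on $B(\cH)^{n_1}\times_c\cdots\times_c B(\cH)^{n_k}$ and agree with $\widetilde{\bf B}_{\bf X}[T_p]$ on ${\bf D}_{{\bf f},rad}^{\bf m}(\cH)$. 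On the radial polydomain $\|G_p({\bf X})-G_q({\bf X})\|=\|\widetilde{\bf B}_{\bf X}[T_p-T_q]\|\le\|T_p-T_q\|$, and since any ${\bf X}\in{\bf D}_{\bf f}^{\bf m}(\cH)$ is a norm-limit of $t{\bf X}\in{\bf D}_{{\bf f},rad}^{\bf m}(\cH)$ as $t\to1$ and the $G_p$ are norm-continuous, this estimate persists on ${\bf D}_{\bf f}^{\bf m}(\cH)$; hence $\{G_p\}$ converges uniformly on ${\bf D}_{\bf f}^{\bf m}(\cH)$ to a norm-continuous limit $G$, which on ${\bf D}_{{\bf f},rad}^{\bf m}$ equals $\lim_p\widetilde{\bf B}_{\bf X}[T_p]=\widetilde{\bf B}_{\bf X}[T]=F$, giving the desired extension. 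For (iii)$\Rightarrow$(i): the first half of (i) is part of (iii); for the second half, ${\bf W}$ is the universal model of ${\bf D}_{\bf f}^{\bf m}$, hence ${\bf W}\in{\bf D}_{\bf f}^{\bf m}(\otimes_{i=1}^kF^2(H_{n_i}))$, the operators $r{\bf W}\in{\bf D}_{{\bf f},rad}^{\bf m}$ satisfy $r{\bf W}\to{\bf W}$ in norm, and applying the norm-continuous extension of $F$ at ${\bf W}$ shows that $F(r{\bf W})\to F({\bf W})$ in norm as $r\to1$.

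Finally, $T=\lim_{r\to1}F(r{\bf W})$ in operator norm was obtained in the cycle. For the operator-space claim, $\Phi$ is the restriction to ${\bf PH}_\cK^c({\bf D}_{\bf f}^{\bf m})$ of the completely isometric isomorphism ${\bf PH}_\cK^\infty({\bf D}_{{\bf f},rad}^{\bf m})\to\boldsymbol\cT$ of Theorem \ref{bounded}; the equivalence just established shows it is a bijection onto $\boldsymbol\cG$, and the matricial norm identities $\|[F_{ij}]\|_m=\|[\Phi(F_{ij})]\|_m$ are verified verbatim as in the proof of Theorem~4.1 in \cite{Po-Toeplitz-poly-hyperball}. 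I expect the main obstacle to be the implication (i)$\Rightarrow$(ii): one must upgrade mere norm-convergence of the radial limit $F(r{\bf W})$ into a Berezin representation whose symbol lies in the \emph{norm}-closed operator space $\boldsymbol\cG$ (and not merely in the WOT-closed space $\boldsymbol\cT$), and must carefully reconcile the three limiting procedures involved — the radial limit of $F$ on ${\bf D}_{{\bf f},rad}^{\bf m}$, the norm-limit of $F(r{\bf W})$, and the norm-continuity of the Berezin transform.
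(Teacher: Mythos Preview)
Your proof is correct and follows essentially the same route as the paper, which itself only sketches the argument by citing Theorem \ref{main}, Corollary \ref{norm}, Theorem \ref{bounded}, the Berezin-transform properties, and Theorem~4.3 of \cite{Po-Toeplitz-poly-hyperball}. Your explicit filling-in of (ii)$\Rightarrow$(iii) via uniform approximation by noncommutative polynomials $G_p$ is exactly the standard argument the paper defers to that reference; the one step you leave slightly implicit, the norm-continuity of $t\mapsto F(tX)$ at $t=1$, follows at once from the contractivity of $\Psi_{{\bf f},(1/\rho){\bf X}}$ (Theorem \ref{funct-calc}) together with the norm-continuity of $s\mapsto F(s{\bf W})$ on $[0,1)$.
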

\begin{proof} The equivalence  (i)$\leftrightarrow$(ii) is due to Theorem \ref{main}, Corollary \ref{norm}, Theorem
\ref{bounded},  and the properties of the noncommutative  Berezin transform. Since the proof is straightforward, we leave it to the reader. The implication (iii)$\rightarrow$(i) is clear while   the proof of the implication (ii)$\rightarrow$(iii) uses Theorem \ref{bounded}. Since the proof of the later implication  and the last part of the theorem is   exactly the same as the corresponding  proof of Theorem 4.3 from \cite{Po-Toeplitz-poly-hyperball}, we leave it to the reader.
\end{proof}

We denote by $\cS_\cK$  the norm-closed linear span of all the operators   of the form
$C\otimes {\bf W}_{\boldsymbol\alpha}{\bf W}_{\boldsymbol\beta}^*$, where $C\in B(\cK)$ and $\boldsymbol\alpha, \boldsymbol\beta\in {\bf F}_{\bf n}^+:=\FF_{n_1}^+\times \cdots \times \FF_{n_k}^+$.
The following result is an easy extension of  Theorem 2.4 from \cite{Po-Berezin1}. We omit the proof.

\begin{theorem} \label{funct-calc}
 If ${\bf X}\in {\bf D_f^m}(\cH)$,
then there is a unital completely contractive linear map $$\Psi_{\bf f,X}:\cS_\cK\to B(\cE)\bar\otimes_{min} B(\cH)$$ such that
$$
\Psi_{\bf f,X}(g)=\lim_{\delta\to 1}\widetilde {\bf B}_{\delta{\bf X}} [g], \qquad g\in \cS_\cK,
$$
where the limit is in the operator norm topology. If, in addition, ${\bf X}$ is a pure tuple in ${\bf D_f^m}(\cH)$, then
$$
\Psi_{\bf f,X}(g)=\widetilde{\bf B}_{\bf X}[g],\qquad  g\in \cS_\cK.
$$
\end{theorem}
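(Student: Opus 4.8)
The plan is to realize $\Psi_{\bf f,X}$ as the operator-norm limit, as $\delta\uparrow 1$, of the completely positive maps $\widetilde{\bf B}_{\delta{\bf X}}$, mimicking the scalar-coefficient argument of \cite[Theorem 2.4]{Po-Berezin1}. The starting observation is that for every ${\bf X}\in{\bf D_f^m}(\cH)$ and $\delta\in[0,1)$ the dilated tuple $\delta{\bf X}$ belongs to ${\bf D}_{{\bf f},rad}^{\bf m}(\cH)$ and is pure (a standard fact for regular polydomains; see \cite{Po-Berezin1}, \cite{Po-domains-models}), so that, by property (ii) of the noncommutative Berezin kernel, ${\bf K}_{\delta{\bf X}}$ is an isometry. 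Hence each $\widetilde{\bf B}_{\delta{\bf X}}$ is unital (since ${\bf K}_{\delta{\bf X}}^*{\bf K}_{\delta{\bf X}}=I_\cH$) and completely positive, therefore completely contractive, as a map from $B(\cK\bigotimes\otimes_{i=1}^kF^2(H_{n_i}))$ into $B(\cK)\bar\otimes_{min}B(\cH)$; its restriction to $\cS_\cK$ is what I wish to pass to the limit.

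The next step is to evaluate $\widetilde{\bf B}_{\delta{\bf X}}$ on the generators of $\cS_\cK$. Applying property (iii) of the Berezin kernel to $\delta{\bf X}$, iterated along a word and over the tensor factors, one gets ${\bf K}_{\delta{\bf X}}^*({\bf W}_{\boldsymbol\alpha}\otimes I_\cH)=\delta^{|\boldsymbol\alpha|}{\bf X}_{\boldsymbol\alpha}{\bf K}_{\delta{\bf X}}^*$ and, taking adjoints, $({\bf W}_{\boldsymbol\beta}^*\otimes I_\cH){\bf K}_{\delta{\bf X}}=\delta^{|\boldsymbol\beta|}{\bf K}_{\delta{\bf X}}{\bf X}_{\boldsymbol\beta}^*$. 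Combining these with the purity identity ${\bf K}_{\delta{\bf X}}^*{\bf K}_{\delta{\bf X}}=I_\cH$ yields
\[
\widetilde{\bf B}_{\delta{\bf X}}\bigl[C\otimes{\bf W}_{\boldsymbol\alpha}{\bf W}_{\boldsymbol\beta}^*\bigr]
=\delta^{|\boldsymbol\alpha|+|\boldsymbol\beta|}\,C\otimes{\bf X}_{\boldsymbol\alpha}{\bf X}_{\boldsymbol\beta}^*,\qquad C\in B(\cK),\ \boldsymbol\alpha,\boldsymbol\beta\in{\bf F}_{\bf n}^+.
\]
In particular $\lim_{\delta\to1}\widetilde{\bf B}_{\delta{\bf X}}[g]$ exists in operator norm for every $g$ in the (algebraic) linear span of these generators, the limit being the corresponding noncommutative polynomial in ${\bf X},{\bf X}^*$ with coefficients from $B(\cK)$. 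To extend this to all of $\cS_\cK$ I would use the uniform bound $\sup_\delta\|\widetilde{\bf B}_{\delta{\bf X}}\|\le 1$ and a routine $\epsilon/3$ argument: approximating $g\in\cS_\cK$ by an element $g_0$ of the dense span, $\|\widetilde{\bf B}_{\delta{\bf X}}[g]-\widetilde{\bf B}_{\delta'{\bf X}}[g]\|\le 2\|g-g_0\|+\|\widetilde{\bf B}_{\delta{\bf X}}[g_0]-\widetilde{\bf B}_{\delta'{\bf X}}[g_0]\|$, so $\{\widetilde{\bf B}_{\delta{\bf X}}[g]\}_\delta$ is norm-Cauchy as $\delta\to1$. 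I define $\Psi_{\bf f,X}(g)$ to be this limit; it lies in $B(\cK)\bar\otimes_{min}B(\cH)$ because that space is norm closed and contains all the $\widetilde{\bf B}_{\delta{\bf X}}[g]$.

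It then remains to check the structural properties. Linearity of $\Psi_{\bf f,X}$ is immediate; unitality follows from $\widetilde{\bf B}_{\delta{\bf X}}[I]=I$; and complete contractivity follows by running the same $\epsilon/3$ argument on a matrix $[g_{ij}]\in M_m(\cS_\cK)$ and using that each $\widetilde{\bf B}_{\delta{\bf X}}$ is completely contractive, which gives $\|[\Psi_{\bf f,X}(g_{ij})]\|=\lim_\delta\|[\widetilde{\bf B}_{\delta{\bf X}}(g_{ij})]\|\le\|[g_{ij}]\|$. For the last assertion, if ${\bf X}$ is itself pure then ${\bf K}_{\bf X}$ is an isometry (property (ii)), the displayed computation with $\delta=1$ gives $\widetilde{\bf B}_{\bf X}[C\otimes{\bf W}_{\boldsymbol\alpha}{\bf W}_{\boldsymbol\beta}^*]=C\otimes{\bf X}_{\boldsymbol\alpha}{\bf X}_{\boldsymbol\beta}^*=\Psi_{\bf f,X}(C\otimes{\bf W}_{\boldsymbol\alpha}{\bf W}_{\boldsymbol\beta}^*)$, and since both $\widetilde{\bf B}_{\bf X}$ and $\Psi_{\bf f,X}$ are norm-continuous they agree on all of $\cS_\cK$. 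The only place demanding real care is the bookkeeping behind the two intertwining identities for the \emph{words} ${\bf W}_{\boldsymbol\alpha}$, ${\bf X}_{\boldsymbol\alpha}$ (applying property (iii) coordinatewise and using commutation of the creation operators on distinct factors); once these are in place the argument is exactly the operator-coefficient version of \cite[Theorem 2.4]{Po-Berezin1}, with $C\otimes(\cdot)$ in place of scalar coefficients.
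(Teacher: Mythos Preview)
Your proposal is correct and takes essentially the same approach as the paper: the paper omits the proof entirely, stating only that the result ``is an easy extension of Theorem 2.4 from \cite{Po-Berezin1}'', and your argument is precisely the operator-coefficient version of that extension, carried out with the expected $\epsilon/3$ passage from generators to the norm closure $\cS_\cK$.
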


In what follows, we show that the free $k$-pluriharmonic functions are characterized by a {\it mean value property}.

\begin{theorem} \label{mean-value}
If $F:{\bf D}_{{\bf f}, rad}^{\bf m}(\cH)\to
 B(\cK)\bar\otimes_{min} B( \cH)$ is a
free $k$-pluriharmonic function, then it has the mean value property, i.e
$$
F({\bf X})=\Psi_{{\bf f}, \frac{1}{r}{\bf X}}[F(r{\bf W})],\qquad {\bf X}\in r{\bf D_f^m}(\cH), r\in (0,1).
$$

Conversely, if a function
$$\varphi:[0,1)\to  \boldsymbol{\cG} :=\text{\rm span}\left\{ C\otimes {\bf W}_{\boldsymbol\alpha}{\bf W}_{\boldsymbol\beta}^* :\   C\in B(\cK),   (\boldsymbol \alpha, \boldsymbol \beta)\in \boldsymbol\cJ\right\}^{\|\cdot\|}
$$
satisfies the relation
$$
\varphi(r)=\widetilde{\bf B}_{\frac{r}{t}{\bf W}}[\varphi(t)],\qquad \text{for any }\  0\leq r<t<1,
$$
then the map $G:{\bf D}_{{\bf f}, rad}^{\bf m}(\cH)\to
 B(\cE)\bar\otimes_{min} B( \cH)$ defined by
 $$
 G({\bf X}):=\Psi_{{\bf f}, \frac{1}{r}{\bf X}}[\varphi(r)],\qquad {\bf X}\in r{\bf D_f^m}(\cH), r\in (0,1),
$$
  is a
free $k$-pluriharmonic function and  $G(r{\bf W})=\varphi(r)$ for any $r\in [0,1)$.
 \end{theorem}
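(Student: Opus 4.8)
\emph{Plan.} The argument rests on one computation: both the functional calculus $\Psi_{{\bf f},{\bf Z}}$ of Theorem~\ref{funct-calc} and the Berezin transform $\widetilde{\bf B}_{\bf Z}$ act diagonally on the ``Fourier modes'' ${\bf W}_{\boldsymbol\alpha}{\bf W}_{\boldsymbol\beta}^*$. First I would record, for ${\bf Z}\in {\bf D_f^m}(\cH)$, $C\in B(\cK)$ and $\boldsymbol\alpha,\boldsymbol\beta\in {\bf F}_{\bf n}^+$, the identity
\begin{equation}\label{diag}
\Psi_{{\bf f},{\bf Z}}\bigl[C\otimes {\bf W}_{\boldsymbol\alpha}{\bf W}_{\boldsymbol\beta}^*\bigr]=C\otimes {\bf Z}_{\boldsymbol\alpha}{\bf Z}_{\boldsymbol\beta}^{*}.
\end{equation}
By Theorem~\ref{funct-calc} it suffices to prove $\widetilde{\bf B}_{\delta{\bf Z}}[C\otimes {\bf W}_{\boldsymbol\alpha}{\bf W}_{\boldsymbol\beta}^*]=\delta^{|\boldsymbol\alpha|+|\boldsymbol\beta|}\,C\otimes {\bf Z}_{\boldsymbol\alpha}{\bf Z}_{\boldsymbol\beta}^{*}$ for $\delta\in(0,1)$ and let $\delta\to1$; since $\delta{\bf Z}$ is a pure element of the polydomain, iterating the intertwining relation ${\bf K}_{\delta{\bf Z}}(\delta{\bf Z})_{i,j}^{*}=({\bf W}_{i,j}^{*}\otimes I){\bf K}_{\delta{\bf Z}}$ over the letters of $\boldsymbol\alpha$ and $\boldsymbol\beta$ (the blocks ${\bf Z}_i$ commuting across $i$) together with ${\bf K}_{\delta{\bf Z}}^{*}{\bf K}_{\delta{\bf Z}}=I$ gives ${\bf K}_{\delta{\bf Z}}^{*}({\bf W}_{\boldsymbol\alpha}{\bf W}_{\boldsymbol\beta}^{*}\otimes I){\bf K}_{\delta{\bf Z}}=(\delta{\bf Z})_{\boldsymbol\alpha}(\delta{\bf Z})_{\boldsymbol\beta}^{*}$, which is \eqref{diag}. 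Two special cases are worth isolating: ${\bf Z}={\bf W}$ shows $\widetilde{\bf B}_{\bf W}$ restricts to the identity on $\cS_\cK$, and ${\bf Z}=\rho{\bf W}$ shows $\widetilde{\bf B}_{\rho{\bf W}}$ multiplies the $(\boldsymbol\alpha,\boldsymbol\beta)$-mode by $\rho^{|\boldsymbol\alpha|+|\boldsymbol\beta|}$.

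\emph{The mean value property.} Let $F$ be free $k$-pluriharmonic, $F({\bf X})=\sum_{(\boldsymbol\alpha,\boldsymbol\beta)\in\boldsymbol\cJ}A_{(\boldsymbol\alpha,\boldsymbol\beta)}\otimes {\bf X}_{\boldsymbol\alpha}{\bf X}_{\boldsymbol\beta}^{*}$; by definition the series $F(r{\bf W})=\sum r^{|\boldsymbol\alpha|+|\boldsymbol\beta|}A_{(\boldsymbol\alpha,\boldsymbol\beta)}\otimes {\bf W}_{\boldsymbol\alpha}{\bf W}_{\boldsymbol\beta}^{*}$ converges in operator norm, so $F(r{\bf W})\in\cS_\cK$. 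For ${\bf X}\in r{\bf D_f^m}(\cH)$ put ${\bf Z}:=\tfrac1r{\bf X}\in {\bf D_f^m}(\cH)$. Passing the bounded linear map $\Psi_{{\bf f},{\bf Z}}$ through the (iterated) partial sums and applying \eqref{diag} term by term yields
$$\Psi_{{\bf f},{\bf Z}}[F(r{\bf W})]=\sum_{(\boldsymbol\alpha,\boldsymbol\beta)\in\boldsymbol\cJ} r^{|\boldsymbol\alpha|+|\boldsymbol\beta|}A_{(\boldsymbol\alpha,\boldsymbol\beta)}\otimes {\bf Z}_{\boldsymbol\alpha}{\bf Z}_{\boldsymbol\beta}^{*}=\sum_{(\boldsymbol\alpha,\boldsymbol\beta)\in\boldsymbol\cJ} A_{(\boldsymbol\alpha,\boldsymbol\beta)}\otimes (r{\bf Z})_{\boldsymbol\alpha}(r{\bf Z})_{\boldsymbol\beta}^{*}=F(r{\bf Z})=F({\bf X}),$$
the middle series converging because $r{\bf Z}={\bf X}$ lies in the radial polydomain. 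Thus $F({\bf X})=\Psi_{{\bf f},\frac1r{\bf X}}[F(r{\bf W})]$.

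\emph{The converse.} Every element of $\boldsymbol\cG$ is a weighted multi-Toeplitz operator (Corollary~\ref{norm}), hence has a unique Fourier representation (Corollary~\ref{formal2}); write that of $\varphi(t)$ as $\sum_{(\boldsymbol\alpha,\boldsymbol\beta)\in\boldsymbol\cJ}B^{(t)}_{(\boldsymbol\alpha,\boldsymbol\beta)}\otimes {\bf W}_{\boldsymbol\alpha}{\bf W}_{\boldsymbol\beta}^{*}$. Since $\varphi(t)$ is a norm-limit of polynomials in ${\bf W},{\bf W}^*$ and the Fourier coefficients depend norm-continuously on the operator, comparing coefficients in $\varphi(r)=\widetilde{\bf B}_{\frac rt{\bf W}}[\varphi(t)]$ and using the second special case above forces $B^{(r)}_{(\boldsymbol\alpha,\boldsymbol\beta)}=(r/t)^{|\boldsymbol\alpha|+|\boldsymbol\beta|}B^{(t)}_{(\boldsymbol\alpha,\boldsymbol\beta)}$; hence $A_{(\boldsymbol\alpha,\boldsymbol\beta)}:=t^{-(|\boldsymbol\alpha|+|\boldsymbol\beta|)}B^{(t)}_{(\boldsymbol\alpha,\boldsymbol\beta)}$ is independent of $t\in(0,1)$ and $B^{(\rho)}_{(\boldsymbol\alpha,\boldsymbol\beta)}=\rho^{|\boldsymbol\alpha|+|\boldsymbol\beta|}A_{(\boldsymbol\alpha,\boldsymbol\beta)}$. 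For $\rho\in(0,1)$, $H_\rho({\bf X}):=\widetilde{\bf B}_{\bf X}[\varphi(\rho)]$ is, by Theorem~\ref{bounded}, a bounded free $k$-pluriharmonic function with series $\sum_{(\boldsymbol\alpha,\boldsymbol\beta)\in\boldsymbol\cJ}\rho^{|\boldsymbol\alpha|+|\boldsymbol\beta|}A_{(\boldsymbol\alpha,\boldsymbol\beta)}\otimes {\bf X}_{\boldsymbol\alpha}{\bf X}_{\boldsymbol\beta}^{*}$, norm-convergent on ${\bf D}_{{\bf f},rad}^{\bf m}(\cH)$. Now take ${\bf Y}\in {\bf D}_{{\bf f},rad}^{\bf m}(\cH)$, write ${\bf Y}=\lambda{\bf U}$ with $\lambda<1$, ${\bf U}\in {\bf D_f^m}(\cH)$, and choose $\rho\in(\lambda,1)$, so that ${\bf Y}\in\rho{\bf D_f^m}(\cH)$ while $\tfrac1\rho{\bf Y}=\tfrac\lambda\rho{\bf U}$ is an interior point of the radial polydomain. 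Then, using Theorem~\ref{funct-calc} and the norm continuity of $H_\rho$,
$$G({\bf Y})=\Psi_{{\bf f},\frac1\rho{\bf Y}}[\varphi(\rho)]=\lim_{\delta\to1}\widetilde{\bf B}_{\frac\delta\rho{\bf Y}}[\varphi(\rho)]=\lim_{\delta\to1}H_\rho\!\Bigl(\tfrac\delta\rho{\bf Y}\Bigr)=H_\rho\!\Bigl(\tfrac1\rho{\bf Y}\Bigr)=\sum_{(\boldsymbol\alpha,\boldsymbol\beta)\in\boldsymbol\cJ}A_{(\boldsymbol\alpha,\boldsymbol\beta)}\otimes {\bf Y}_{\boldsymbol\alpha}{\bf Y}_{\boldsymbol\beta}^{*},$$
with norm convergence. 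Since the $A_{(\boldsymbol\alpha,\boldsymbol\beta)}$ are $\rho$-independent and every point of ${\bf D}_{{\bf f},rad}^{\bf m}(\cH)$ arises this way, $G$ is a free $k$-pluriharmonic function; and for ${\bf Y}=r{\bf W}$ (so $\rho=r$, $\tfrac1\rho{\bf Y}={\bf W}$) the first special case gives $G(r{\bf W})=\widetilde{\bf B}_{\bf W}[\varphi(r)]=\varphi(r)$.

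\emph{Main obstacle.} The only delicate point is the legitimacy of interchanging $\Psi_{{\bf f},{\bf Z}}$ and $\widetilde{\bf B}_{\bf X}$ with the infinite Fourier sums and of matching up the various diagonal computations; this is exactly the operator-norm convergence of the radial evaluations ($r,\rho,t<1$, as in Theorem~\ref{Fourier}) together with the norm continuity of the Fourier-coefficient functionals (as in the proof of Theorem~\ref{multi-homo}). One also uses two standard facts freely invoked in the cited works: strict dilates $\delta{\bf Z}$ ($\delta<1$) of elements of a regular polydomain are pure, and ${\bf D_f^m}(\cH)$ is invariant under scaling by $[0,1]$. The rest is the coefficient bookkeeping indicated above.
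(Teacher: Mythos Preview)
Your proof is correct and follows essentially the same route as the paper's: both rest on the diagonal action \eqref{diag} of $\Psi_{{\bf f},{\bf Z}}$ (equivalently $\widetilde{\bf B}_{\bf Z}$) on the Fourier modes, use it to read off the mean value property from the norm-convergent expansion of $F(r{\bf W})$, and in the converse match Fourier coefficients under $\widetilde{\bf B}_{\frac{r}{t}{\bf W}}$ to obtain $t$-independent $A_{(\boldsymbol\alpha,\boldsymbol\beta)}$ and hence a genuine $k$-pluriharmonic $G$ with $G(r{\bf W})=\varphi(r)$. The only cosmetic difference is that you justify the coefficient comparison via norm-continuity (legitimate since $\varphi(t)\in\boldsymbol\cG$), whereas the paper routes it through the SOT-approximants $\varphi_\delta(t)$ and SOT-continuity of the Berezin transform on bounded sets; you might add one line noting that the consistency of the definition of $G$ for \emph{all} admissible $r$ (not just $\rho>\lambda$) follows a posteriori from the already-proved mean value property applied to $G$.
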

\begin{proof}
Let  $F:{\bf D}_{f, rad}^m(\cH)\to
 B(\cK)\otimes_{min} B( \cH)$  be a
free pluriharmonic function with representation
\begin{equation*} F({\bf X})=\sum_{s_1\in \ZZ}\cdots \sum_{s_k\in \ZZ}
\sum_{(\boldsymbol \alpha, \boldsymbol \beta)\in \boldsymbol\cJ_{\bf s}} A_{(\boldsymbol \alpha, \boldsymbol \beta)}\otimes {\bf X}_{ \boldsymbol\alpha}{\bf X}_{\boldsymbol\beta}^*,\qquad {\bf X}\in {\bf D}_{{\bf f},rad}^{\bf m}(\cH),
\end{equation*}
 where the convergence is in the operator norm topology.  In particular, we have
  \begin{equation*} F(r{\bf W})=\sum_{s_1\in \ZZ}\cdots \sum_{s_k\in \ZZ}
\sum_{(\boldsymbol \alpha, \boldsymbol \beta)\in \boldsymbol\cJ_{\bf s}} A_{(\boldsymbol \alpha, \boldsymbol \beta)}\otimes r^{|\boldsymbol\alpha|+|\boldsymbol\beta|}{\bf W}_{ \boldsymbol\alpha}{\bf W}_{\boldsymbol\beta}^*,\qquad r\in [0,1),
\end{equation*}
 where the convergence is in the operator norm topology.
Let ${\bf X}\in r{\bf D}_{{\bf f}}^{\bf m}(\cH)$, $r\in (0,1)$,   and note that  $F(r{\bf W})\in \boldsymbol\cG$.
Consequently, using the continuity  of $F$ on  $r{\bf D}_{f}^m(\cH)$, we deduce that
\begin{equation*}
\begin{split}
\Psi_{{\bf f}, \frac{1}{r}{\bf X}}[F(r{\bf W})]&=\lim_{\delta\to 1}\widetilde{\bf B}_{\frac{\delta}{r}{\bf X}}[F(r{\bf W})]\\
&=\lim_{\delta\to 1} F(\delta {\bf X})=F({\bf X}),
\end{split}
\end{equation*}
where $r<\delta<1$ and the limits are in the operator norm topology.

Now, we
 prove the converse.  To this end, assume that  a function $\varphi:[0,1)\to  \boldsymbol\cG$ satisfies the relation
\begin{equation}
\label{varp}
\varphi(r)=\widetilde{\bf B}_{\frac{r}{t}{\bf W}}[\varphi(t)],\qquad \text{for any }\  0\leq r<t<1.
\end{equation}
Acording  to Theorem  \ref{Fourier} and Corollary \ref{norm}, $\varphi(r)$ is a  weighed right multi-Toeplitz operator with unique formal  Fourier  representation
$$\sum_{s_1\in \ZZ}\cdots \sum_{s_k\in \ZZ}
\sum_{(\boldsymbol \alpha, \boldsymbol \beta)\in \boldsymbol\cJ_{\bf s}}  r^{|\boldsymbol\alpha|+|\boldsymbol\beta|}A_{(\boldsymbol \alpha, \boldsymbol \beta)}(r)\otimes{\bf W}_{ \boldsymbol\alpha}{\bf W}_{\boldsymbol\beta}^*$$
for some operators $\{A_{(\boldsymbol \alpha, \boldsymbol \beta)}(r)\}_{(\boldsymbol\alpha,\boldsymbol\beta)\in \boldsymbol\cJ}$ in $B(\cK)$.
On the other hand, setting
$$
\varphi_\delta(r):=\sum_{s_1\in \ZZ}\cdots \sum_{s_k\in \ZZ}
\sum_{(\boldsymbol \alpha, \boldsymbol \beta)\in \boldsymbol\cJ_{\bf s}}  r^{|\boldsymbol\alpha|+|\boldsymbol\beta|}A_{(\boldsymbol \alpha, \boldsymbol \beta)}(r)\otimes \delta^{|\boldsymbol\alpha|+|\boldsymbol\beta|}{\bf W}_{ \boldsymbol\alpha}{\bf W}_{\boldsymbol\beta}^*, \qquad \delta\in [0,1),
$$
where  the convergence of the series is in the operator norm topology, Theorem \ref{Fourier}  shows that
\begin{equation}
\label{Vasu}
\varphi(r)=\text{\rm SOT-}\lim_{\delta\to 1}\varphi_\delta(r)\quad \text{ and }\quad  \sup_{\delta\in [0,1)}\|\varphi_\delta(r)\|=\|\varphi(r)\|
\end{equation}
for any $r\in [0,1)$.
 Taking into account that  the map $Y\to Y\otimes I$ is SOT-continuous on bounded sets, one can easily see that  the noncommutative Berezin transform $\widetilde{\bf B}_{\frac{r}{t}{\bf  W}}$ is SOT-continuous on bounded sets.
Due to   relation \eqref{varp}  and the results above,  we have
\begin{equation*}
\begin{split}
\varphi(r)&=\widetilde{\bf B}_{\frac{r}{t}{\bf W}}[\varphi(t)]
=\text{\rm SOT-}\lim_{\delta \to 1}
\widetilde{\bf B}_{\frac{r}{t}{\bf W}}[\varphi_\delta(t)]\\
&=\text{\rm SOT-}\lim_{\delta \to 1}\left(
 \sum_{s_1\in \ZZ}\cdots \sum_{s_k\in \ZZ}
\sum_{(\boldsymbol \alpha, \boldsymbol \beta)\in \boldsymbol\cJ_{\bf s}}  r^{|\boldsymbol\alpha|+|\boldsymbol\beta|}A_{(\boldsymbol \alpha, \boldsymbol \beta)}(t)\otimes \delta^{|\boldsymbol\alpha|+|\boldsymbol\beta|}{\bf W}_{ \boldsymbol\alpha}{\bf W}_{\boldsymbol\beta}^*
\right).
\end{split}
\end{equation*}
 Since, due \eqref{Vasu}, we also have
 $$
 \varphi(r)=\text{\rm SOT-}\lim_{\delta\to 1}\varphi_\delta(r)
 =
 \text{\rm SOT-}\lim_{\delta\to 1}\left( \sum_{s_1\in \ZZ}\cdots \sum_{s_k\in \ZZ}
\sum_{(\boldsymbol \alpha, \boldsymbol \beta)\in \boldsymbol\cJ_{\bf s}}  r^{|\boldsymbol\alpha|+|\boldsymbol\beta|}A_{(\boldsymbol \alpha, \boldsymbol \beta)}(r)\otimes \delta^{|\boldsymbol\alpha|+|\boldsymbol\beta|}{\bf W}_{ \boldsymbol\alpha}{\bf W}_{\boldsymbol\beta}^*\right),
 $$
  one can easily  see that $A_{(\boldsymbol \alpha, \boldsymbol \beta)}(r)=A_{(\boldsymbol \alpha, \boldsymbol \beta)}(t)$ for any $(\boldsymbol \alpha, \boldsymbol \beta)\in \boldsymbol \cJ$ and any $0\leq r<t<1$. Therefore, using again relation \eqref{Vasu}, we deduce that
$$
\varphi(r)=\sum_{s_1\in \ZZ}\cdots \sum_{s_k\in \ZZ}
\sum_{(\boldsymbol \alpha, \boldsymbol \beta)\in \boldsymbol\cJ_{\bf s}}  r^{|\boldsymbol\alpha|+|\boldsymbol\beta|}A_{(\boldsymbol \alpha, \boldsymbol \beta)}\otimes  {\bf W}_{ \boldsymbol\alpha}{\bf W}_{\boldsymbol\beta}^*, \qquad r\in [0,1),
$$
for some operators $\{A_{(\boldsymbol \alpha, \boldsymbol \beta)}\}_{(\boldsymbol\alpha,\boldsymbol\beta)\in \boldsymbol\cJ}$, where the convergence is in the operator norm topology.
Consequently,
$$
G({\bf X}):= \sum_{s_1\in \ZZ}\cdots \sum_{s_k\in \ZZ}
\sum_{(\boldsymbol \alpha, \boldsymbol \beta)\in \boldsymbol\cJ_{\bf s}}   A_{(\boldsymbol \alpha, \boldsymbol \beta)}\otimes  {\bf X}_{ \boldsymbol\alpha}{\bf X}_{\boldsymbol\beta}^*, \qquad {\bf X}\in {\bf D}_{{\bf f},rad}^{\bf m}(\cH),
$$
is a free $k$-pluriharmonic function
and $G(r{\bf W})=\varphi(r)$ for any $r\in [0,1)$.  Note also that
$$
 G({\bf X})=\Psi_{{\bf f}, \frac{1}{r}{\bf X}}[\varphi(r)]
$$
for any ${\bf X}\in r{\bf D_f^m}(\cH)$ and  $r\in (0,1)$.
The proof is complete.
\end{proof}

 Let  $G$  be a free $k$-pluriharmonic function with representation as above.  We associate with $G$  the  weighted multi-Toeplitz kernels $\Gamma_{rG}:{\bf F}_{\bf n}^+\times {\bf F}_{\bf n}^+\to B(\cK)$, $r\in[0,1)$,  defined by
 \begin{equation*}
\Gamma_{rF}(\boldsymbol\omega, \boldsymbol\gamma):=
\begin{cases}
\tau_{(\boldsymbol\omega,\boldsymbol\gamma)}r^{|{\bf s}(\boldsymbol \omega, \boldsymbol\gamma)|} A_{{\bf s}(\boldsymbol \omega, \boldsymbol\gamma)}, & \text{ if  } (\boldsymbol \omega, \boldsymbol\gamma)\in \boldsymbol\cC,\\
0,&  \text{ if  } (\boldsymbol \omega, \boldsymbol\gamma)\in ({\bf F}_{\bf n }^+\times {\bf F}_{\bf n }^+)\backslash \boldsymbol\cC,
\end{cases}
\end{equation*}
where the weights $ \{\tau_{(\boldsymbol\omega,\boldsymbol\gamma)}\}_{(\boldsymbol \omega, \boldsymbol\gamma)\in \boldsymbol\cC}$ are defined in Section 1.

Now, we prove a Schur type result  characterizing  the  positive free  $k$-pluriharmonic functions.

\begin{theorem}
Let $G:{\bf D}_{{\bf f}, rad}^{\bf m}(\cH)\to
 B(\cK)\bar\otimes_{min} B( \cH)$ be a free $k$-pluriharmonic function. Then the following statements  are equivalent.
 \begin{enumerate}
 \item[(i)] $G({\bf X})\geq 0$ for any ${\bf X}\in {\bf D}_{{\bf f}, rad}^{\bf m}(\cH)$.
 \item[(ii)] $G(r{\bf W})\geq 0$ for any $r\in [0,1)$.
 \item[(iii)]  The weighted multi-Toeplitz kernel $\Gamma_{rG}$ is positive semidefinite for any $r\in [0,1)$.
 \end{enumerate}
\end{theorem}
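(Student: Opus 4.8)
The plan is to prove the chain (i)$\Rightarrow$(ii)$\Rightarrow$(i) together with the equivalence (ii)$\Leftrightarrow$(iii); once Theorems \ref{main}, \ref{mean-value}, and \ref{funct-calc} are available, nearly everything is bookkeeping. The implication (i)$\Rightarrow$(ii) is immediate: for each $r\in[0,1)$ the tuple $r{\bf W}$ lies in $r{\bf D_f^m}(\otimes_{i=1}^kF^2(H_{n_i}))\subset {\bf D}_{{\bf f},rad}^{\bf m}$, so positivity of $G$ on the whole radial polydomain forces $G(r{\bf W})\geq 0$.

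For (ii)$\Rightarrow$(i) the key tool is the mean value property. Writing the representation of $G$ as $G({\bf X})=\sum_{{\bf s}\in\ZZ^k}\sum_{(\boldsymbol\alpha,\boldsymbol\beta)\in\boldsymbol\cJ_{\bf s}}A_{(\boldsymbol\alpha,\boldsymbol\beta)}\otimes{\bf X}_{\boldsymbol\alpha}{\bf X}_{\boldsymbol\beta}^*$, the series evaluated at $r{\bf W}$ converges in operator norm, so $G(r{\bf W})\in\boldsymbol\cG\subset\cS_\cK$. Theorem \ref{mean-value} gives $G({\bf X})=\Psi_{{\bf f},\frac1r{\bf X}}[G(r{\bf W})]$ for every ${\bf X}\in r{\bf D_f^m}(\cH)$ with $r\in(0,1)$, while Theorem \ref{funct-calc} says $\Psi_{{\bf f},\frac1r{\bf X}}$ is a unital complete contraction on $\cS_\cK$. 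Since $\cS_\cK$ is a unital self-adjoint operator space, hence an operator system, a unital complete contraction on it is completely positive, so $\Psi_{{\bf f},\frac1r{\bf X}}$ preserves positivity; combined with $G(r{\bf W})\geq0$ from (ii), this yields $G({\bf X})\geq0$. Because ${\bf D}_{{\bf f},rad}^{\bf m}(\cH)=\cup_{r\in[0,1)}r{\bf D_f^m}(\cH)$ and the point ${\bf X}=0$ is trivial (there $G$ equals its constant coefficient, which is $G(0\cdot{\bf W})\geq0$), this covers every ${\bf X}$ in the radial polydomain, giving (i).

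Finally, (ii)$\Leftrightarrow$(iii) is obtained by identifying $\Gamma_{rG}$ with the operator matrix of $G(r{\bf W})$ in the orthonormal basis $\{e_{\boldsymbol\gamma}\}$. From the computations carried out in the proof of Theorem \ref{main}, for $x,y\in\cK$ and $\boldsymbol\omega,\boldsymbol\gamma\in{\bf F}_{\bf n}^+$ one has
$$
\left<G(r{\bf W})(x\otimes e_{\boldsymbol\gamma}),y\otimes e_{\boldsymbol\omega}\right>=\left<\Gamma_{rG}(\boldsymbol\omega,\boldsymbol\gamma)x,y\right>,
$$
so that for any finite family $\boldsymbol\gamma_1,\dots,\boldsymbol\gamma_p\in{\bf F}_{\bf n}^+$ and $x_1,\dots,x_p\in\cK$, writing $\zeta:=\sum_{j=1}^px_j\otimes e_{\boldsymbol\gamma_j}\in\cP_\cK$,
$$
\left<G(r{\bf W})\zeta,\zeta\right>=\sum_{i,j=1}^p\left<\Gamma_{rG}(\boldsymbol\gamma_i,\boldsymbol\gamma_j)x_j,x_i\right>.
$$
If $G(r{\bf W})\geq0$, the right side is nonnegative for all such choices, i.e. $\Gamma_{rG}$ is positive semidefinite; conversely, if $\Gamma_{rG}$ is positive semidefinite then $\langle G(r{\bf W})\zeta,\zeta\rangle\geq0$ for every $\zeta$ in the dense subspace $\cP_\cK$, and since $G(r{\bf W})$ is a bounded operator this extends to the whole Hilbert space, so $G(r{\bf W})\geq0$.

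The only genuinely delicate points are the verification that $\Gamma_{rG}$ really is the matrix of $G(r{\bf W})$ (which reduces to bookkeeping with the weights $\tau_{(\boldsymbol\omega,\boldsymbol\gamma)}$ and the simplification function ${\bf s}$ set up in Section 1, and is already implicit in the proof of Theorem \ref{main}) and the observation that $G(r{\bf W})$ lies in $\cS_\cK$ so that the functional calculus map $\Psi_{{\bf f},\frac1r{\bf X}}$ of Theorem \ref{funct-calc} is applicable; together with the standard fact that a unital complete contraction between operator systems is completely positive, these furnish the proof.
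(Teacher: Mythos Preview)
Your proof is correct and follows essentially the same route as the paper: the equivalence (ii)$\Leftrightarrow$(iii) via the matrix identity $\langle G(r{\bf W})(x\otimes e_{\boldsymbol\gamma}),y\otimes e_{\boldsymbol\omega}\rangle=\langle\Gamma_{rG}(\boldsymbol\omega,\boldsymbol\gamma)x,y\rangle$, and the equivalence (i)$\Leftrightarrow$(ii) via the mean value property $G({\bf X})=\Psi_{{\bf f},\frac1r{\bf X}}[G(r{\bf W})]$ together with positivity of $\Psi_{{\bf f},\frac1r{\bf X}}$. The only cosmetic difference is that the paper justifies the positivity of $\Psi_{{\bf f},\frac1r{\bf X}}$ directly from the Berezin kernel formula $\widetilde{\bf B}_{\bf X}[g]=(I_\cK\otimes{\bf K}_{\bf X}^*)(g\otimes I)(I_\cK\otimes{\bf K}_{\bf X})$ (manifestly positive) and its norm limit, whereas you invoke the abstract fact that a unital complete contraction on an operator system is completely positive; both arguments are valid and yield the same conclusion.
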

\begin{proof} Assume that $G$ has the representation
$$
G({\bf X}):= \sum_{s_1\in \ZZ}\cdots \sum_{s_k\in \ZZ}
\sum_{(\boldsymbol \alpha, \boldsymbol \beta)\in \boldsymbol\cJ_{\bf s}}   A_{(\boldsymbol \alpha, \boldsymbol \beta)}\otimes  {\bf X}_{ \boldsymbol\alpha}{\bf X}_{\boldsymbol\beta}^*, \qquad {\bf X}\in {\bf D}_{{\bf f},rad}^{\bf m}(\cH).
$$
   Since $G(r{\bf W})$ is a weighted multi-Toeplitz operator, a careful computation shows that
   $$
\left<G(r{\bf W})\left(\sum_{|\boldsymbol\beta|\leq q} h_{\boldsymbol\beta}\otimes e_{\boldsymbol\beta}\right), \sum_{|\boldsymbol\gamma|\leq q} h_{\boldsymbol\gamma}\otimes e_{\boldsymbol\gamma}\right>
=
\sum_{|\boldsymbol\beta|, |\boldsymbol\gamma|\leq q}
  \left<\Gamma_{rG}(\boldsymbol\gamma, \boldsymbol\beta) h_{\boldsymbol\beta}, h_{\boldsymbol\gamma}\right>
$$
for any $h_{\boldsymbol\beta}\in \cK$, where $\boldsymbol\beta\in {\bf F}_{n}^+$ and  $|\boldsymbol\beta|\leq q$.
Consequently, $G(r{\bf W})\geq 0$ for any $r\in [0,1)$ if and only if $\Gamma_{rG}$ is a positive semidefinite kernel for any $r\in [0,1)$. This shows that (ii) is equivalent to (iii).

 On the other hand, for each $X\in {\bf D}^{\bf m}_{{\bf f},rad}(\cH)$, there is $r\in (0,1)$ such that $X\in r{\bf D^m_f}(\cH)$. Due to Theorem \ref{mean-value}, we have
 $$
 G({\bf X})=\Psi_{{\bf f}, \frac{1}{r}{\bf X}}[G(r{\bf W})].
 $$
 Since the noncommutative Berezin transform is a positive map,  so is the map $\Psi_{{\bf f}, \frac{1}{r}{\bf X}}$.
 Now, it is clear that if $G(r{\bf W})\geq 0$ for any $r\in [0,1)$, then
 $ G({\bf X})\geq 0$ for any $X\in {\bf D}^{\bf m}_{{\bf f},rad}(\cH)$.
 The converse is obviously  true, which completes the proof of the equivalence of (i) with (ii).
 \end{proof}

Now, we prove  an analogue of Weierstrass  theorem  for free  $k$-pluriharmonic functions on the noncommutative polydomain ${\bf D}_{{\bf f}, rad}^{\bf m}(\cH)$.

\begin{theorem} \label{Weier} Let $F_s:{\bf D}_{{\bf f}, rad}^{\bf m}(\cH)\to B(\cK)\bar\otimes_{min} B( \cH)$, $s\in \NN$,   be a sequence of free  $k$-pluriharmonic functions such that, for any  $r\in [0,1)$,
 the sequence $\{F_s(r{\bf W}\}_{s=1}^\infty$
 is convergent in the operator norm topology.
  Then there is a free  $k$-pluriharmonic  function   $F:{\bf D}_{{\bf f}, rad}^{\bf m}(\cH)\to B(\cK)\bar\otimes_{min} B( \cH) $ such that $F_s(r{\bf W})$ converges to $F(r{\bf W})$, as $s\to \infty$,   for
any  $r\in [0,1)$. In particular, $F_s$ converges  to $F$ uniformly on any domain $r{\bf D_f^m}(\cH)$, $r\in [0,1)$.
\end{theorem}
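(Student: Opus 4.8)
The plan is to reconstruct the limit function from the mean value property of Theorem~\ref{mean-value} rather than from a series expansion. First I would set, for each $r\in[0,1)$, $G_r:=\lim_{s\to\infty}F_s(r{\bf W})$, the operator-norm limit furnished by the hypothesis. Since each $F_s$ is free $k$-pluriharmonic, its defining representation shows that $F_s(r{\bf W})=\sum_{s_1\in\ZZ}\cdots\sum_{s_k\in\ZZ}\sum_{(\boldsymbol\alpha,\boldsymbol\beta)\in\boldsymbol\cJ_{\bf s}}r^{|\boldsymbol\alpha|+|\boldsymbol\beta|}A_{(\boldsymbol\alpha,\boldsymbol\beta)}\otimes{\bf W}_{\boldsymbol\alpha}{\bf W}_{\boldsymbol\beta}^*$ with convergence in the operator norm, so $F_s(r{\bf W})$ lies in $\boldsymbol\cG:=\text{\rm span}\{C\otimes{\bf W}_{\boldsymbol\alpha}{\bf W}_{\boldsymbol\beta}^*:C\in B(\cK),\ (\boldsymbol\alpha,\boldsymbol\beta)\in\boldsymbol\cJ\}^{\|\cdot\|}$; as $\boldsymbol\cG$ is norm-closed, $G_r\in\boldsymbol\cG$ for every $r\in[0,1)$.

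Next I would pass to the limit in the Berezin reproducing identity. By the mean value property (Theorem~\ref{mean-value}) applied at the pure tuple $\tfrac rt{\bf W}\in{\bf D_f^m}$, together with the pure-tuple clause of Theorem~\ref{funct-calc}, one has $F_s(r{\bf W})=\widetilde{\bf B}_{\frac rt{\bf W}}[F_s(t{\bf W})]$ for all $0\le r<t<1$ and all $s$ (this is the identity already exploited in the proof of Theorem~\ref{main}). The extended noncommutative Berezin transform $\widetilde{\bf B}_{\frac rt{\bf W}}$ is completely contractive, hence norm continuous, so letting $s\to\infty$ yields
$$
G_r=\widetilde{\bf B}_{\frac rt{\bf W}}[G_t],\qquad 0\le r<t<1 .
$$

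Now I would invoke the converse half of Theorem~\ref{mean-value}. The map $\varphi:[0,1)\to\boldsymbol\cG$ defined by $\varphi(r):=G_r$ satisfies precisely the required compatibility $\varphi(r)=\widetilde{\bf B}_{\frac rt{\bf W}}[\varphi(t)]$ for $0\le r<t<1$, so Theorem~\ref{mean-value} produces a free $k$-pluriharmonic function $F:{\bf D}_{{\bf f},rad}^{\bf m}(\cH)\to B(\cK)\bar\otimes_{min}B(\cH)$, namely $F({\bf X}):=\Psi_{{\bf f},\frac1r{\bf X}}[\varphi(r)]$ for ${\bf X}\in r{\bf D_f^m}(\cH)$, $r\in(0,1)$, with $F(r{\bf W})=\varphi(r)=G_r=\lim_{s\to\infty}F_s(r{\bf W})$ for every $r\in[0,1)$; this gives the first assertion. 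For the final sentence, fix $r\in(0,1)$ (the case $r=0$ being trivial). For ${\bf X}\in r{\bf D_f^m}(\cH)$, applying the mean value property to $F_s$ and to $F$ and subtracting gives $F_s({\bf X})-F({\bf X})=\Psi_{{\bf f},\frac1r{\bf X}}[F_s(r{\bf W})-F(r{\bf W})]$; since $\Psi_{{\bf f},\frac1r{\bf X}}$ is completely contractive by Theorem~\ref{funct-calc}, $\|F_s({\bf X})-F({\bf X})\|\le\|F_s(r{\bf W})-F(r{\bf W})\|$, and the right-hand side tends to $0$ independently of ${\bf X}$, giving uniform convergence on $r{\bf D_f^m}(\cH)$.

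The step I expect to be the crux is the second one: turning the a priori unrelated family $\{G_r\}_{r\in[0,1)}$ of operators into a single free $k$-pluriharmonic function. The hypothesis only gives separate operator-norm limits for each $r$, and it is exactly the compatibility relation $G_r=\widetilde{\bf B}_{\frac rt{\bf W}}[G_t]$ — obtained by combining the mean value property of each $F_s$ with the norm continuity of the Berezin transform, and which genuinely uses that the convergence is in operator norm rather than merely WOT — that lets the converse direction of Theorem~\ref{mean-value} manufacture the common radial series; once that relation is in hand, the rest is essentially bookkeeping.
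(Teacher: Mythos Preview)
Your proof is correct and follows essentially the same route as the paper: define $\varphi(r):=\lim_{s\to\infty}F_s(r{\bf W})\in\boldsymbol\cG$, pass the Berezin identity $F_s(r{\bf W})=\widetilde{\bf B}_{\frac{r}{t}{\bf W}}[F_s(t{\bf W})]$ to the limit by norm continuity, and then invoke the converse half of Theorem~\ref{mean-value} to produce $F$. The only cosmetic difference is in the final uniform-convergence step, where the paper quotes the noncommutative von Neumann inequality to obtain $\sup_{{\bf X}\in r{\bf D_f^m}(\cH)}\|F({\bf X})-F_s({\bf X})\|=\|F(r{\bf W})-F_s(r{\bf W})\|$, while you obtain the inequality $\le$ directly from the mean value property and the complete contractivity of $\Psi_{{\bf f},\frac{1}{r}{\bf X}}$; either version suffices.
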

\begin{proof}
Let $F_s$ have the representation
$$
F_s(X)= \sum_{s_1\in \ZZ}\cdots \sum_{s_k\in \ZZ}
\sum_{(\boldsymbol \alpha, \boldsymbol \beta)\in \boldsymbol\cJ_{\bf s}}   A_{(\boldsymbol \alpha, \boldsymbol \beta)}(s)\otimes  {\bf X}_{ \boldsymbol\alpha}{\bf X}_{\boldsymbol\beta}^*, \qquad {\bf X}\in {\bf D}_{{\bf f},rad}^{\bf m}(\cH),
$$
for some operators $\{A_{(\boldsymbol \alpha, \boldsymbol \beta)}(s)\}_{(\boldsymbol\alpha,\boldsymbol\beta)\in \boldsymbol\cJ}$, where the convergence is in the operator norm topology.
 If $r\in [0,1)$, then  $F_s(r{\bf W})$ is  in the operator space
 $$ \boldsymbol{\cG} :=\text{\rm span}\left\{ C\otimes {\bf W}_{\boldsymbol\alpha}{\bf W}_{\boldsymbol\beta}^* :\   C\in B(\cK),   (\boldsymbol \alpha, \boldsymbol \beta)\in \boldsymbol\cJ\right\}^{\|\cdot\|}.
  $$
Let  $\varphi:[0,1)\to  \boldsymbol{\cG} $ be defined by
\begin{equation}\label{fi-lim}
\varphi(r):=\lim_{s\to \infty} F_s(r{\bf W})\in \boldsymbol\cG,\qquad r\in [0,1).
\end{equation}
Note that, for any  $0 \leq r<t<1$,  we have
\begin{equation*}
\begin{split}
\widetilde{\bf B}_{\frac{r}{t}{\bf W}}[\varphi(t)]&=\lim_{s\to \infty}\widetilde{\bf B}_{\frac{r}{t}{\bf W}}[F_s(t{\bf W})]=\lim_{s\to \infty} F_s(r{\bf W})=\varphi(r),
\end{split}
\end{equation*}
where the limits are in the operator norm topology. Applying  Theorem \ref{mean-value},  we deduce  that the map $F:{\bf D}_{{\bf f}, rad}^{\bf m}(\cH)\to
 B(\cE)\bar\otimes_{min} B( \cH)$ defined by
 $$
 F({\bf X}):=\Psi_{{\bf f}, \frac{1}{r}{\bf X}}[\varphi(r)],\qquad {\bf X}\in r{\bf D_f^m}(\cH), r\in (0,1),
$$
  is a
free  $k$-pluriharmonic function on  ${\bf D}_{{\bf f},rad}^{\bf m}(\cH)$
 and  $F(r{\bf W})=\varphi(r)$ for any $r\in [0,1)$. Using relation \eqref{fi-lim}, we deduce that  $F(r{\bf W})=\lim_{k\to\infty} F_k(r{\bf W})$, $r\in [0,1)$.
 Due to the noncommutative von Neumann inequality for polydomains \cite{Po-Berezin1}, we have
 $$
 \sup_{{\bf X}\in r{\bf D_f^m}(\cH)} \|F({\bf X})-F_s({\bf X})\|= \|F(r{\bf W})-F_s(r{\bf W})\|,
 $$
 which implies  that $F_s$ converges  to $F$ uniformly on any domain $r{\bf D_f^m}(\cH)$, $r\in [0,1)$.
The proof is complete.
\end{proof}

\begin{corollary}  Let $G_s:{\bf D}_{{\bf f}, rad}^{\bf m}(\cH)\to B(\cK)\bar\otimes_{min} B( \cH)$, $s\in \NN$,   be a sequence of free pluriharmonic functions such that
$\{G_s(0)\}$ is a convergent sequence  in the  operator norm topology and
$$G_1\leq G_2\leq\cdots.
$$
Then $G_s$ converges to a free  $k$-pluriharmonic function on ${\bf D}_{{\bf f}, rad}^{\bf m}(\cH)$.
\end{corollary}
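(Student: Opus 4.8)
The plan is to reduce the statement to the Weierstrass-type Theorem \ref{Weier}: it suffices to show that, for each fixed $r\in[0,1)$, the sequence $\{G_s(r{\bf W})\}_{s\in\NN}$ converges in the operator norm topology. Granting this, Theorem \ref{Weier} produces a free $k$-pluriharmonic function $F$ with $G_s(r{\bf W})\to F(r{\bf W})$ for every $r$ and with $G_s\to F$ uniformly on each $r{\bf D_f^m}(\cH)$, which is exactly the desired conclusion. (Note that each $G_s(r{\bf W})$ is a well-defined bounded operator, $G_s$ being free $k$-pluriharmonic on the radial polydomain.) To set this up I would first transport the monotonicity to the universal model: for $s\geq t$ the difference $H_{s,t}:=G_s-G_t$ is again a free $k$-pluriharmonic function (the class is closed under differences) and $H_{s,t}({\bf X})\geq 0$ for every ${\bf X}$ in the radial polydomain by the hypothesis $G_1\leq G_2\leq\cdots$; hence, by the Schur-type characterization of positive free $k$-pluriharmonic functions proved just above, the weighted multi-Toeplitz kernel $\Gamma_{\rho H_{s,t}}$ is positive semidefinite for every $\rho\in[0,1)$, equivalently $H_{s,t}(\rho{\bf W})\geq 0$. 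In particular $\{G_s(r{\bf W})\}_s$ is an increasing sequence of self-adjoint operators, and $H_{s,t}({\bf 0})=G_s({\bf 0})-G_t({\bf 0})$.

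The key step is a Harnack-type estimate:
$$
\|H(r{\bf W})\|\ \leq\ c_r\,\|H({\bf 0})\|,\qquad r\in[0,1),
$$
valid for every positive free $k$-pluriharmonic function $H$ on ${\bf D}_{{\bf f},rad}^{\bf m}$, with a finite constant $c_r$ depending only on $r$ (and on ${\bf f},{\bf m}$), not on $H$. The elementary half is a bound on the Fourier coefficients: applying positive semidefiniteness of $\Gamma_{\rho H}$ to the $2\times2$ principal submatrices indexed by a pair $(\boldsymbol\omega,\boldsymbol\gamma)$, and using that $\Gamma_{\rho H}(\boldsymbol\gamma,\boldsymbol\gamma)$ equals the constant coefficient $A_{\bf 0}$ of $H$ for every $\boldsymbol\gamma$ (since $\tau_{(\boldsymbol\gamma,\boldsymbol\gamma)}=1$ and ${\bf s}(\boldsymbol\gamma,\boldsymbol\gamma)$ is the identity pair, so $\Gamma_{\rho H}(\boldsymbol\gamma,\boldsymbol\gamma)=A_{\bf 0}$ and $\|H({\bf 0})\|=\|A_{\bf 0}\|$), one obtains $\|\Gamma_{\rho H}(\boldsymbol\omega,\boldsymbol\gamma)\|\leq\|H({\bf 0})\|$ for all $\rho$, hence, letting $\rho\to1$ with $(\boldsymbol\alpha,\boldsymbol\beta)=(\boldsymbol\omega,\boldsymbol\gamma)\in\boldsymbol\cJ$, the estimate $\tau_{(\boldsymbol\alpha,\boldsymbol\beta)}\|A_{(\boldsymbol\alpha,\boldsymbol\beta)}\|\leq\|H({\bf 0})\|$ for every $(\boldsymbol\alpha,\boldsymbol\beta)\in\boldsymbol\cJ$. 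To pass from this to the operator-norm bound on $H(r{\bf W})=\sum_{{\bf s}}r^{|{\bf s}|}q_{\bf s}({\bf W},{\bf W}^*)$ I would invoke the noncommutative Poisson/Berezin machinery: the positive-semidefinite boundary kernel $\bigl[\tau_{(\boldsymbol\omega,\boldsymbol\gamma)}A_{{\bf s}(\boldsymbol\omega,\boldsymbol\gamma)}\bigr]$ has constant diagonal $A_{\bf 0}$, so a Naimark/Kolmogorov dilation realizes $H$ as the noncommutative Poisson transform of a "measure" of total mass controlled by $\|A_{\bf 0}\|$, and $H(r{\bf W})$ is its $r$-dilate; the desired inequality is then the operator-valued several-variable analogue of the classical Harnack inequality $\|u(r\,\cdot)\|_\infty\leq\frac{1+r}{1-r}\,u(0)$ for positive harmonic functions. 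I expect this last passage to be the main obstacle: a crude Schur-test bound on the Poisson multiplier $\bigl[r^{|{\bf s}(\boldsymbol\omega,\boldsymbol\gamma)|}\bigr]$ diverges once $r$ is not small relative to the numbers of generators, so one must exploit the correlations among the coefficients forced by full positive semidefiniteness (equivalently, the Herglotz/Carath\'eodory structure of $H$) rather than the bare coefficient bound above — or simply cite the Harnack inequality for free $k$-pluriharmonic functions on polydomains.

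Granting the Harnack estimate, the proof finishes immediately. Applying it to $H_{s,t}=G_s-G_t$ for $s\geq t$ gives
$$
\|G_s(r{\bf W})-G_t(r{\bf W})\|=\|H_{s,t}(r{\bf W})\|\leq c_r\,\|G_s({\bf 0})-G_t({\bf 0})\|,
$$
and the right-hand side tends to $0$ as $s,t\to\infty$ because $\{G_s({\bf 0})\}$ is norm-convergent by hypothesis. Hence $\{G_s(r{\bf W})\}_s$ is norm-Cauchy, and therefore norm-convergent, for each $r\in[0,1)$, so Theorem \ref{Weier} applies and yields a free $k$-pluriharmonic function to which $G_s$ converges (uniformly on each $r{\bf D_f^m}(\cH)$), which completes the proof.
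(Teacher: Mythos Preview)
Your approach is essentially the same as the paper's: reduce to Theorem \ref{Weier} by showing $\{G_s(r{\bf W})\}_s$ is norm-Cauchy via a Harnack inequality applied to the positive differences $G_s-G_t$. The paper simply cites the needed Harnack inequality from \cite{Po-Bohr-domains} (after normalizing so that $G_1\geq 0$) rather than attempting to derive it, so your lengthy discussion of kernel positivity and Naimark dilation can be replaced by that citation.
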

\begin{proof}Without loss of generality, we may assume that $G_1\geq 0$, otherwise we take $F_s:=G_s-G_1$, $s\in \NN$. Due to Harnack type inequality for positive free  $k$-pluriharmonic functions on ${\bf D}_{{\bf f}, rad}^{\bf m}(\cH)$ (see \cite{Po-Bohr-domains}),  if $s\geq q$, then we have
$$
\|G_s({\bf X})-G_q({\bf X})\|\leq \|G_s(0)-G_q(0)\| \frac{1-r}{1+r}
$$
for any $X\in r{\bf D_f^m}(\cH)$. Consequently,  since $\{G_s(0)\}$ is a Cauchy  sequence in the operator norm, we conclude that $\{G_s\}$ is a uniformly Cauchy sequence on $r{\bf D_f^m}(\cH)$.  Hence, $\{G_s(r{\bf W})\}$ is  convergent in the operator norm topology. Employing Theorem \ref{Weier}, we find  a free  $k$-pluriharmonic  function   $G:{\bf D}_{{\bf f}, rad}^{\bf m}(\cH)\to B(\cK)\bar\otimes_{min} B( \cH) $ such that $G_s(r{\bf W})$ converges to $G(r{\bf W})$, as $s\to \infty$,   for
any  $r\in [0,1)$. In particular, $G_s$ converges  to $G$ uniformly on any domain $r{\bf D_f^m}(\cH)$, $r\in [0,1)$.
The proof is complete.
\end{proof}

 We denote by  $Har_\cK( {\bf D}_{{\bf f}, rad}^{\bf m})$ denote the set of all free  $k$-pluriharmonic functions  $F:{\bf D}_{{\bf f}, rad}^{\bf m}(\cH)\to B(\cK)\bar\otimes_{min} B( \cH)$.
For $F,G\in Har_\cK( {\bf D}_{{\bf f}, rad}^{\bf m})$ and
$0<r<1$, we define
$$
d_r(F,G):=\|F(r{W})-G(r{W})\|.
$$
As before, we assume that  $\cH$ is
a separable  infinite dimensional  Hilbert space. The noncommutative von Neumann inequality for   the polydomain ${\bf D_f^m}(\cH)$ implies
$$
d_r(F,G)=\sup_{{X}\in r{\bf D_f^m}(\cH)}
\|F({X})-G({X})\|.
$$
 Let $\{r_s\}_{s=1}^\infty$ be an increasing sequence  of positive numbers
  convergent  to $1$.
For any $F,G\in Har_\cK( {\bf D}_{{\bf f}, rad}^{\bf m})$, we define
$$
\rho (F,G):=\sum_{s=1}^\infty \left(\frac{1}{2}\right)^s
\frac{d_{r_s}(F,G)}{1+d_{r_s}(F,G)}.
$$
It is easy to see that
      $\rho$ is a metric
on $Har_\cK( {\bf D}_{{\bf f}, rad}^{\bf m})$.

As a consequence of Theorem \ref{Weier}, we obtain the following result.  Since the proof is similar to that of Theorem 3.10 from \cite{Po-Toeplitz}, we omit it.
\begin{theorem}\label{complete-metric}
$\left(Har_\cK( {\bf D}_{{\bf f}, rad}^{\bf m}), \rho\right)$  is a complete metric space.
 \end{theorem}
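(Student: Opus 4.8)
The plan is to show that $\rho$ is a metric for which Cauchy sequences converge, reducing everything to the Weierstrass-type result Theorem \ref{Weier}. First I would verify that $\rho$ is indeed a metric: symmetry and non-negativity are immediate from the definition, the triangle inequality follows from the fact that each summand $t\mapsto \frac{t}{1+t}$ is increasing and subadditive on $[0,\infty)$ together with the triangle inequality for each $d_{r_s}$, and $\rho(F,G)=0$ forces $d_{r_s}(F,G)=0$ for every $s$, hence $F(r_s{\bf W})=G(r_s{\bf W})$ for all $s$; since the $r_s$ increase to $1$ and each $F(r{\bf W})$ is obtained from $F(r_s{\bf W})$ (for $r<r_s$) by applying the noncommutative Berezin transform $\widetilde{\bf B}_{\frac{r}{r_s}{\bf W}}$ by the mean value property in Theorem \ref{mean-value}, we get $F(r{\bf W})=G(r{\bf W})$ for all $r\in[0,1)$, and then $F=G$ on ${\bf D}_{{\bf f},rad}^{\bf m}(\cH)$ again via Theorem \ref{mean-value} (or via uniqueness of the norm-convergent representation).

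Next I would establish completeness. Let $\{F^{(p)}\}_{p\geq 1}$ be a $\rho$-Cauchy sequence in $Har_\cK({\bf D}_{{\bf f},rad}^{\bf m})$. The key observation is that $\rho$-convergence controls each $d_{r_s}$: if $\rho(F^{(p)},F^{(q)})<\epsilon$ with $\epsilon<2^{-s}$, then the $s$-th summand alone gives $\frac{d_{r_s}(F^{(p)},F^{(q)})}{1+d_{r_s}(F^{(p)},F^{(q)})}<2^s\epsilon$, whence $d_{r_s}(F^{(p)},F^{(q)})\to 0$ as $p,q\to\infty$ for each fixed $s$. Thus $\{F^{(p)}(r_s{\bf W})\}_p$ is a Cauchy sequence in the operator norm of $B(\cK\otimes\bigotimes_{i=1}^k F^2(H_{n_i}))$ for every $s$, and since each $F^{(p)}(r_s{\bf W})$ lies in the norm-closed space $\boldsymbol{\cG}$, it converges in norm to some element of $\boldsymbol{\cG}$. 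For an arbitrary $r\in[0,1)$, pick $s$ with $r<r_s$; since $F^{(p)}(r{\bf W})=\widetilde{\bf B}_{\frac{r}{r_s}{\bf W}}[F^{(p)}(r_s{\bf W})]$ and the Berezin transform is norm-contractive, $\{F^{(p)}(r{\bf W})\}_p$ is also norm-Cauchy, hence norm-convergent, for every $r\in[0,1)$.

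Now I would invoke Theorem \ref{Weier}: since $\{F^{(p)}(r{\bf W})\}_p$ converges in the operator norm topology for each $r\in[0,1)$, there exists a free $k$-pluriharmonic function $F:{\bf D}_{{\bf f},rad}^{\bf m}(\cH)\to B(\cK)\bar\otimes_{min}B(\cH)$ with $F^{(p)}(r{\bf W})\to F(r{\bf W})$ in norm for every $r\in[0,1)$. This means $d_r(F^{(p)},F)\to 0$ for each $r$, in particular for each $r=r_s$. It remains to upgrade this coordinatewise convergence to $\rho$-convergence: given $\epsilon>0$, choose $N$ so large that $\sum_{s>N}2^{-s}<\epsilon/2$ (the tail of the series is small uniformly, since each summand is bounded by $2^{-s}$), and then choose $p$ large enough that $d_{r_s}(F^{(p)},F)$ is small for each $s\leq N$, making $\sum_{s=1}^N 2^{-s}\frac{d_{r_s}(F^{(p)},F)}{1+d_{r_s}(F^{(p)},F)}<\epsilon/2$; together this gives $\rho(F^{(p)},F)<\epsilon$. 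Hence $F^{(p)}\to F$ in $\rho$, and $\left(Har_\cK({\bf D}_{{\bf f},rad}^{\bf m}),\rho\right)$ is complete.

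The main obstacle, and the only place real content enters, is the passage from norm-convergence of the sequences $\{F^{(p)}(r{\bf W})\}_p$ to the existence of a \emph{genuine free $k$-pluriharmonic function} $F$ having these as its values at $r{\bf W}$ — this is exactly Theorem \ref{Weier}, whose proof in turn rests on the mean value property (Theorem \ref{mean-value}) and the noncommutative von Neumann inequality for polydomains. Everything else is the routine verification that the particular weighting $\sum_s 2^{-s}\frac{d_{r_s}}{1+d_{r_s}}$ turns coordinatewise convergence along the exhausting sequence $\{r_s\}$ into convergence in $\rho$, which is the standard device for metrizing uniform-convergence-on-compacts-type topologies. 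Since this is structurally identical to the proof of Theorem 3.10 in \cite{Po-Toeplitz}, the argument can be compressed or cited, as the statement of Theorem \ref{complete-metric} itself indicates.
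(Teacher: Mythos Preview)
Your proposal is correct and follows precisely the standard approach that the paper alludes to when it omits the proof and cites Theorem~3.10 of \cite{Po-Toeplitz}: reduce $\rho$-Cauchy to norm-Cauchy at each $r_s$, propagate to all $r\in[0,1)$ via the contractive Berezin transform, invoke Theorem~\ref{Weier} to produce the limiting free $k$-pluriharmonic function, and then upgrade pointwise convergence along $\{r_s\}$ back to $\rho$-convergence by the usual tail estimate. There is nothing to add; your identification of Theorem~\ref{Weier} as the only substantive input is exactly right.
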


      \bigskip

\section{Brown-Halmos type equation associated with weighted multi-Toeplitz operators}

Brown and Halmos \cite{BH} proved that a necessary and sufficient condition that an operator on the Hardy space $H^2(\DD)$ be a Toeplitz operator is that
  $$
  S^*TS=T,
  $$
  where $S$ is the unilateral shift on $H^2(\DD)$. In \cite{LO}, Louhichi and Olofsson obtain a Brown-Halmos type characterization of Toeplitz operators with harmonic symbols on the weighted Bergman space $A_m(\DD)$,
 the Hilbert space of all analytic functions on the unit disc $\DD$ with
$$
\|f\|^2:=\frac{m-1}{\pi}\int_\DD|f(z)|^2 (1-|z|^2)^{m-2}dz<\infty.
$$
 Their result was recently extended by Eschmeier and Langend\" orfer  \cite{EL} to the analytic functional  Hilbert space $H_m(\BB)$ on the unit ball $\BB\subset \CC^n$ given by the reproducing kernel $\kappa_m(z,w):=\left(1-\left<z,w\right>\right)^{-m}$ for  $z,w\in \BB$, where  $m\geq 1$.
 Recently,  we obtained  (see \cite{Po-Toeplitz-poly-hyperball} )  a Brown-Halmos  type characterizations of  the weighted multi-Toeplitz operators associated with noncommutative poly-hyperballs, i.e. when $f_i=Z_{i,1}+\cdots +Z_{i,n_i}$ for any $i\in \{1,\ldots, k\}$.

In this section, we prove that the weighted multi-Toeplitz operators satisfy a Brown-Halmos type equation associated with the polydomain ${\bf D_f^m}$.

For each $i\in \{1,\ldots, k\}$ and $j\in \{1,\ldots, n_i\}$, we define the {\it weighted right creation operators}
$\Lambda_{i,j}:F^2(H_{n_i})\to F^2(H_{n_i})$ by setting $\Lambda_{i,j}:= R_{i ,j}G_{i,j}$,
  where $R_{i,1},\ldots, R_{i,n_i}$ are
 the right creation operators on the full Fock space $F^2(H_{n_i})$ and the operators  $G_{i,j}:F^2(H_{n_i})\to F^2(H_{n_i})$, $j\in \{1,\ldots, n_i\}$,   are defined  by
 $$
G_{i,j}e^i_\alpha:=\sqrt{\frac{b_{i,\alpha}^{(m_i)}}{b_{i, \alpha g_j^i}^{(m_i)}}}
e^i_\alpha,\qquad
 \alpha\in \FF_{n_i}^+.
$$
 In this case, we have
\begin{equation*}
\Lambda_{i,\beta} e^i_\gamma= \frac {\sqrt{b_{i,\gamma}^{(m_i)}}}{\sqrt{b_{i,
\gamma \tilde\beta}^{(m_i)}}} e^i_{ \gamma \tilde \beta} \quad \text{
and }\quad \Lambda_{i,\beta}^* e^i_\alpha =\begin{cases} \frac
{\sqrt{b_{i,\gamma}^{(m_i)}}}{\sqrt{b_{i,\alpha}^{(m_i)}}}e_\gamma& \text{ if
}
\alpha=\gamma \tilde \beta \\
0& \text{ otherwise }
\end{cases}
\end{equation*}
 for any $\alpha, \beta \in \FF_{n_i}^+$, where $\tilde \beta$ denotes
 the reverse of $\beta=g^i_{j_1}\cdots g^i_{j_p}$, i.e.,
 $\tilde \beta=g_{j_p}^i\cdots g^i_{j_1}$.
  We introduce  the operator ${\bf \Lambda}_{i,j}$ acting on
$F^2(H_{n_1})\otimes\cdots\otimes F^2(H_{n_k})$ and given by
$${\bf \Lambda}_{i,j}:=\underbrace{I\otimes\cdots\otimes I}_{\text{${i-1}$
times}}\otimes \Lambda_{i,j}\otimes \underbrace{I\otimes\cdots\otimes
I}_{\text{${k-i}$ times}}.
$$
We set   ${\bf \Lambda}_i:=({\bf \Lambda}_{i,1},\ldots,{\bf \Lambda}_{i,n_i})$  for each $i\in \{1,\ldots, k\}$.
The $k$-tuple   ${\bf \Lambda}:=({\bf \Lambda}_1,\ldots, {\bf \Lambda}_k)$
 plays the role of the {\it right universal model for the  noncommutative polydomain}
${\bf D}_{\tilde {\bf f}}^{\bf m}$.

For each $i\in \{1,\ldots, k\}$, set  $\tilde f_i:=\sum_{\alpha\in \FF_{n_i}^+, |\alpha|\geq 1}a_{i,\tilde\alpha}Z_{i,\alpha}$.   We recall from \cite{Po-domains} that
$$\sum_{\alpha\in \FF_{n_i}^+, |\alpha|\geq 1}a_{i,\tilde\alpha}\Lambda_{i,\alpha} \Lambda_{i,\alpha}^*\leq I\quad \text{ and } \quad \left(id-\Phi_{\tilde f_i,\Lambda_i}\right)^{m_i}(I)=P_\CC,
$$
where $\Phi_{\tilde f_i,\Lambda_i}(X)=\sum_{\alpha\in \FF_{n_i}^+, |\alpha|\geq 1}a_{i,\tilde\alpha}\Lambda_{i,\alpha} X \Lambda_{i,\alpha}^*.
$
For each $i\in \{1,\ldots, k\}$, we
define $\Gamma_i:=\left\{ \alpha\in \FF_{n_i}^+: \ a_{i,\alpha}\neq 0\right\}$ and $N_i:=\text{\rm card}\,\Gamma_i$. The direct sum of $N_i$ copies of $\cH$ is denoted by $\cH^{(N_i)}$.  We associate with ${\bf \Lambda}_i:=({\bf \Lambda}_{i,1},\ldots, {\bf \Lambda}_{i,n_i})$ the row contraction $C_{\tilde f_i,{\bf \Lambda}_i}:(\otimes_{s=1}^k F^2(H_{n_s}))^{(N_i)}\to \otimes_{s=1}^k F^2(H_{n_s})$ defined by
$$
C_{\tilde f_i,{\bf \Lambda}_i}:=\left[\sqrt{a_{i,\tilde\alpha}}{\bf \Lambda}_{i,\alpha} \cdots \right]_{\alpha\in \Gamma_i},
$$
where the entries $\sqrt{a_{i,\tilde\alpha}}{\bf \Lambda}_{i,\alpha}$ are arranged in the lexicographic order of $\Gamma_i\subset \FF_{n_i}^+$.
If $Y\in B(\cH,\cK)$, we denote by ${\bf diag}_{N} (Y):\cH^{(N)}\to \cK^{(N)}$  the direct sum of $N$ copies of $Y$.

\begin{proposition} \label{WWW} For each  $i\in \{1,\ldots, k\}$, the operator $C_{\tilde f_i,{\bf \Lambda}_i}$ satisfies the following properties.
\begin{enumerate}
\item[(i)]  The operator $C_{\tilde f_i,{\bf \Lambda}_i}$ has closed range and
$$\text{\rm range}\,C_{\tilde f_i,{\bf \Lambda}_i}=\left(\otimes_{s=1}^{i-1} F^2(H_{n_s})\right)\otimes(F^2(H_{n_i})\ominus \CC)\otimes \left(\otimes_{s=i+1}^{k} F^2(H_{n_s})\right).
$$
\item[(ii)]  The operator
$$C_{\tilde f_i,{\bf \Lambda}_i}^*C_{\tilde f_i,{\bf \Lambda}_i}: \text{\rm range}\,C_{\tilde f_i,{\bf \Lambda}_i}^*\to \text{\rm range}\,C_{\tilde f_i,{\bf \Lambda}_i}^*$$
is invertible
    and
 the operator
  $$
  C_{\tilde f_i,{\bf \Lambda}_i}(C_{\tilde f_i,{\bf \Lambda}_i}^*C_{\tilde f_i,{\bf \Lambda}_i})^{-1}C_{\tilde f_i,{\bf \Lambda}_i}^*: \otimes_{s=1}^k F^2(H_{n_s})\to \otimes_{s=1}^k F^2(H_{n_s})
  $$ is the orthogonal projection of
$\otimes_{s=1}^k F^2(H_{n_s})$ onto
$$ \left(\otimes_{s=1}^{i-1} F^2(H_{n_s})\right)\otimes(F^2(H_{n_i})\ominus \CC)\otimes \left(\otimes_{s=i+1}^{k} F^2(H_{n_s})\right).
$$
\item[(iii)]  The following identity holds:
$$C_{\tilde f_i,{\bf \Lambda}_i}(C_{\tilde f_i,{\bf \Lambda}_i}^*C_{\tilde f_i,{\bf \Lambda}_i})^{-1}=C_{\tilde f_i,{\bf \Lambda}_i} {\bf diag}_{N_i}\left( \sum_{j=0}^{m_i-1} (-1)^j \left(\begin{matrix}  m_i\\j+1
\end{matrix}\right) \Phi_{\tilde f_i,{\bf \Lambda}_i}(I)\right)|_{\text{\rm range}\,C_{\tilde f_i,{\bf \Lambda}_i}^* }.
$$
\end{enumerate}
 \end{proposition}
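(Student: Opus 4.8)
The plan is to work entirely in the $i$-th tensor factor. Fix $i$ and write $C:=C_{\tilde f_i,{\bf\Lambda}_i}$, $\Phi:=\Phi_{\tilde f_i,{\bf\Lambda}_i}$, and $\cM:=\left(\otimes_{s=1}^{i-1}F^2(H_{n_s})\right)\otimes\left(F^2(H_{n_i})\ominus\CC\right)\otimes\left(\otimes_{s=i+1}^{k}F^2(H_{n_s})\right)$ for the target subspace of the statement. Two elementary preliminaries drive everything. First, from the recalled identity $(id-\Phi_{\tilde f_i,\Lambda_i})^{m_i}(I)=P_\CC$ of \cite{Po-domains} and the fact that ${\bf\Lambda}_{i,\alpha}$ acts only in the $i$-th slot, one gets $(id-\Phi)^{m_i}(I)=I\otimes\cdots\otimes P_\CC\otimes\cdots\otimes I=:Q_i$, the orthogonal projection onto $\cM^\perp$; equivalently $I-Q_i=P_\cM$. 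Second, the row structure of $C$ yields, for every $X\in B(\otimes_{s=1}^kF^2(H_{n_s}))$, the identity $\Phi(X)=C\,{\bf diag}_{N_i}(X)\,C^*$; in particular $CC^*=\Phi(I)$ and, iterating, $\Phi^j(I)=C\,{\bf diag}_{N_i}(\Phi^{j-1}(I))\,C^*$ for every $j\geq1$. The zero entries of $C$ arising from the indexing convention are harmless here, since this identity only uses $C\,{\bf diag}_{N_i}(X)\,C^*=\Phi(X)$.

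The heart of the argument, which I would establish first, is the identity
$$
P_\cM \;=\; I-Q_i \;=\; C\,{\bf diag}_{N_i}(B_i)\,C^*,\qquad B_i:=\sum_{j=0}^{m_i-1}(-1)^j\binom{m_i}{j+1}\Phi^j(I).
$$
To obtain it I would expand $(id-\Phi)^{m_i}=\sum_{j=0}^{m_i}(-1)^j\binom{m_i}{j}\Phi^j$ (valid since $id$ and $\Phi$ commute), apply it to $I$ and use $(id-\Phi)^{m_i}(I)=Q_i$ to write $I-Q_i=\sum_{j=1}^{m_i}(-1)^{j+1}\binom{m_i}{j}\Phi^j(I)$, then replace each $\Phi^j(I)$ by $C\,{\bf diag}_{N_i}(\Phi^{j-1}(I))\,C^*$, factor out $C$ on the left and $C^*$ on the right, and reindex $j\mapsto j-1$. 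This also shows that the term $\Phi_{\tilde f_i,{\bf\Lambda}_i}(I)$ inside the ${\bf diag}$ in part (iii) should read $\Phi_{\tilde f_i,{\bf\Lambda}_i}^j(I)$.

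Given this identity, part (i) is immediate: $\cM=\text{\rm range}\,P_\cM\subseteq\text{\rm range}\,C$ from the displayed factorization, while $\text{\rm range}\,C\subseteq\cM$ because each ${\bf\Lambda}_{i,\alpha}$ with $|\alpha|\geq1$ maps $\otimes_{s=1}^kF^2(H_{n_s})$ into $\cM$ (as $\Lambda_{i,\alpha}^*(1)=0$ in $F^2(H_{n_i})$) and every $\alpha$ occurring in $C$ has $|\alpha|\geq1$; hence $\text{\rm range}\,C=\cM$, which is closed. For parts (ii) and (iii) I would then invoke the standard facts about an operator with closed range: $C^*$ has closed range $(\ker C)^\perp$, the compression $C^*C|_{\text{\rm range}\,C^*}$ is a bijection of $\text{\rm range}\,C^*$ onto itself (injective since $\ker C^*C=\ker C$, surjective since $\text{\rm range}\,C^*C=\text{\rm range}\,C^*$ for closed-range operators), hence invertible by the bounded inverse theorem, and $C(C^*C)^{-1}C^*=P_{\text{\rm range}\,C}$. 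Combined with (i) this is exactly (ii). For (iii), I would compare $C\,{\bf diag}_{N_i}(B_i)\,C^*=P_\cM=C(C^*C)^{-1}C^*$ and cancel the right-hand $C^*$ on $\text{\rm range}\,C^*$: given $y\in\text{\rm range}\,C^*$, closed range lets me write $y=C^*Cw$, whence $C(C^*C)^{-1}y=Cw=P_\cM(Cw)=C\,{\bf diag}_{N_i}(B_i)\,C^*(Cw)=C\,{\bf diag}_{N_i}(B_i)\,y$, giving $C(C^*C)^{-1}=C\,{\bf diag}_{N_i}(B_i)|_{\text{\rm range}\,C^*}$.

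I expect the main obstacle to be the bookkeeping in the key identity $I-Q_i=C\,{\bf diag}_{N_i}(B_i)\,C^*$: one must keep the binomial coefficients straight through the shift $j\mapsto j-1$, be precise that $C\,{\bf diag}_{N_i}(X)\,C^*=\Phi(X)$ so that the indexing convention in the definition of $C$ plays no role, and verify carefully that the tensored identity $(id-\Phi_{\tilde f_i,{\bf\Lambda}_i})^{m_i}(I)$ really is the orthogonal projection onto $\cM^\perp$. Once this identity is in place, parts (i)--(iii) follow from it together with the closed-range functional calculus described above.
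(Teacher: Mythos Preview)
Your proof is correct and uses essentially the same ingredients as the paper's: the relation $\Phi(X)=C\,{\bf diag}_{N_i}(X)\,C^*$, the identity $(id-\Phi)^{m_i}(I)=Q_i$, the binomial expansion of $I-Q_i$, and the standard closed-range facts. The only real difference is organizational: the paper proves (i) directly (using that $a_{i,g_j^i}>0$ for the single-letter columns of $C$ to get $\text{range}\,C\supseteq\cM$) and then derives the key factorization $C(C^*C)^{-1}C^*=C\,{\bf diag}_{N_i}(B_i)\,C^*$ from (ii), whereas you establish this factorization first and read off $\cM\subseteq\text{range}\,C$ from it; both routes are valid, and you are also right that the $\Phi(I)$ in the displayed statement of (iii) should be $\Phi^j(I)$, as the paper's own proof confirms.
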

 \begin{proof}
 Due to the definition of  ${\bf \Lambda}_i:=({\bf \Lambda}_{i,1},\ldots,{\bf \Lambda}_{i,n_i})$, it is easy to see that
 $$
 \text{\rm range}\ {\bf \Lambda}_i=\left(\otimes_{s=1}^{i-1} F^2(H_{n_s})\right)\otimes(F^2(H_{n_i})\ominus \CC)\otimes \left(\otimes_{s=i+1}^{k} F^2(H_{n_s})\right).
$$
Note that
$\text{\rm range}\,C_{\tilde f_i,{\bf \Lambda}_i}\subset \text{\rm range}\ {\bf \Lambda}_i$ and, for any $\varphi_1,\ldots, \varphi_{n_i}\in F^2(H_{n_i})$, we have
$$
C_{\tilde f_i,{\bf \Lambda}_i} \left[\begin{matrix} \varphi_1\\ \vdots\\ \varphi_{n_i}\\0\\ \vdots
\end{matrix}\right]=\sum_{j=1}^{n_i} \sqrt{a_{i,g^i_j}}{\bf \Lambda}_{i,j}\varphi_j=\text{\rm range}\ {\bf \Lambda}_i.
$$
 The later equality is due to the fact that $a_{i,g^i_j}>0$ for any $j\in \{1,\ldots, n_i\}$. Therefore, we have
 $\text{\rm range}\,C_{\tilde f_i,{\bf \Lambda}_i}= \text{\rm range}\ {\bf \Lambda}_i$, which proves item (i).

 To prove part (ii), note that since $C_{\tilde f_i,{\bf \Lambda}_i}$  has closed range, so does $C_{\tilde f_i,{\bf \Lambda}_i}^*$. Consequently, the operator
$$C_{\tilde f_i,{\bf \Lambda}_i}^*C_{\tilde f_i,{\bf \Lambda}_i}: \text{\rm range}\,C_{\tilde f_i,{\bf \Lambda}_i}^*\to \text{\rm range}\,C_{\tilde f_i,{\bf \Lambda}_i}^*$$
is invertible. On the other hand, if $y\in \text{\rm range}\,C_{\tilde f_i,{\bf \Lambda}_i}$,  then $y=C_{\tilde f_i,{\bf \Lambda}_i}x$ for some $x\in\text{\rm range}\,C_{\tilde f_i,{\bf \Lambda}_i}^*$ and
$$
C_{\tilde f_i,{\bf \Lambda}_i}(C_{\tilde f_i,{\bf \Lambda}_i}^*C_{\tilde f_i,{\bf \Lambda}_i})^{-1}C_{\tilde f_i,{\bf \Lambda}_i}^* y=C_{\tilde f_i,{\bf \Lambda}_i}(C_{\tilde f_i,{\bf \Lambda}_i}^*C_{\tilde f_i,{\bf \Lambda}_i})^{-1}C_{\tilde f_i,{\bf \Lambda}_i}^*C_{\tilde f_i,{\bf \Lambda}_i}x=C_{\tilde f_i,{\bf \Lambda}_i}x=y.
$$
If $y\in (\text{\rm range}\,C_{\tilde f_i,{\bf \Lambda}_i})^\perp  =\ker C_{\tilde f_i,{\bf \Lambda}_i}^*$, then
$C_{\tilde f_i,{\bf \Lambda}_i}(C_{\tilde f_i,{\bf \Lambda}_i}^*C_{\tilde f_i,{\bf \Lambda}_i})^{-1}C_{\tilde f_i,{\bf \Lambda}_i}^* y=0$. This completes the proof of item (ii).

  To prove item (iii), we recall   that
$$
 (id-\Phi_{\tilde f_i{\bf \Lambda}_i})^{m_i}(I)=\underbrace{I\otimes\cdots\otimes I}_{\text{${i-1}$
times}}\otimes {\bf P}_\CC\otimes \underbrace{I\otimes\cdots\otimes
I}_{\text{${k-i}$ times}},\qquad i\in \{1,\ldots, k\},
 $$
  where ${\bf P}_\CC$ is the
 orthogonal projection of  $  F^2(H_{n_i})$ onto $\CC 1\subset F^2(H_{n_i})$.  Consequently, using item (ii),   we deduce that

 \begin{equation*}
 \begin{split}
 C_{\tilde f_i,{\bf \Lambda}_i}(C_{\tilde f_i,{\bf \Lambda}_i}^*C_{\tilde f_i,{\bf \Lambda}_i})^{-1}C_{\tilde f_i,{\bf \Lambda}_i}^*  &= I_{\otimes_{s=1}^k F^2(H_{n_s})}-(id-\Phi_{\tilde f_i,{\bf \Lambda}_i})^{m_i}(I)\\
 &=
 \sum_{j=0}^{m_i-1} (-1)^j \left(\begin{matrix}  m_i\\j+1
\end{matrix}\right)\Phi^{j+1}_{\tilde f_i,{\bf \Lambda}_i}(I)\\
&=C_{\tilde f_i,{\bf \Lambda}_i}{\bf diag}_{N_i}\left( \sum_{j=0}^{m_i-1} (-1)^j \left(\begin{matrix}  m_i\\j+1
\end{matrix}\right)\Phi^j_{\tilde f_i,{\bf \Lambda}_i}(I)\right)C_{\tilde f_i,{\bf \Lambda}_i}^*.
\end{split}
 \end{equation*}
 Hence, part (iii) follows. The proof is complete.
\end{proof}

For each $i\in \{1,\ldots, k\}$, the operator $C_{\tilde f_i,{\bf \Lambda}_i}': {\text{\rm range}\,C_{\tilde f_i,{\bf \Lambda}_i}^* }\to \otimes_{s=1}^k F^2(H_{n_s})$
defined by
$$C_{\tilde f_i,{\bf \Lambda}_i}':=C_{\tilde f_i,{\bf \Lambda}_i}(C_{\tilde f_i,{\bf \Lambda}_i}^*C_{\tilde f_i,{\bf \Lambda}_i})^{-1}
$$ is called the {\it Cauchy dual }of $C_{\tilde f_i,{\bf \Lambda}_i}$.

\begin{definition}
An operator $T\in B(\otimes_{s=1}^k F^2(H_{n_s}))$ is said to have the Brown-Halmos property if
$$
C_{\tilde f_i,{\bf \Lambda}_i}^{\prime*}TC_{\tilde f_i,{\bf \Lambda}_i}'= {\bf P}_{{\text{\rm range}\,C_{\tilde f_i,{\bf \Lambda}_i}^* }}
{\bf diag}_{N_i}\left( \sum_{j=0}^{m_i-1} (-1)^j \left(\begin{matrix}  m_i\\j+1
\end{matrix}\right)\Phi^j_{\tilde f_i,{\bf \Lambda}_i}(T)\right)|_{{\text{\rm range}\,C_{\tilde f_i,{\bf \Lambda}_i}^* }}
$$
for any $i\in \{1,\ldots, k\}$.
\end{definition}

  If $\cK$ is a separable Hilbert space, we say that an operator  $T\in B\left(\cK\bigotimes \otimes_{s=1}^k F^2(H_{n_s})\right)$ satisfies  the Brown-Halmos condition if
\begin{equation} \label{BH}
C_{\tilde f_i,I\otimes{\bf \Lambda}_i}^{\prime*}TC_{\tilde f_i,I\otimes {\bf \Lambda}_i}'= {\bf P}_{{\text{\rm range}\,C_{\tilde f_i,I\otimes{\bf \Lambda}_i}^* }}
{\bf diag}_{N_i}\left( \sum_{j=0}^{m_i-1} (-1)^j \left(\begin{matrix}  m_i\\j+1
\end{matrix}\right)\Phi^j_{\tilde f_i,I\otimes{\bf \Lambda}_i}(T)\right)|_{{\text{\rm range}\,C_{\tilde f_i, I\otimes{\bf \Lambda}_i}^* }}
\end{equation}
for any $i\in \{1,\ldots, k\}$,
where   $I\otimes{\bf \Lambda}_i:=[I_\cK\otimes{\bf \Lambda}_{i,1}\cdots I_\cK\otimes {\bf \Lambda}_{i,n_i}]$.
We would like to find all the solutions of this operator  equation.

\begin{theorem} \label{th-BH} If $T\in B(\cK\otimes \otimes_{s=1}^k F^2(H_{n_s}))$ is a weighted multi-Toeplitz operator associated with the polydomain ${\bf D_f^m}$, then it satisfies the Brown-Halmos property.
\end{theorem}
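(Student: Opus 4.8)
The plan is to reduce the statement, in a few steps, to a single scalar identity in the algebra of formal power series over the free monoid, and then to verify that identity by a generating-function computation. First I would reduce to a single generating monomial. Both sides of \eqref{BH} are linear in $T$ and continuous along uniformly bounded nets in the weak operator topology: the left side because left and right multiplication by the fixed bounded operators $C_{\tilde f_i,I\otimes{\bf\Lambda}_i}'$, $C_{\tilde f_i,I\otimes{\bf\Lambda}_i}^{\prime*}$ is WOT-continuous; the right side because each $Y\mapsto\Phi_{\tilde f_i,I\otimes{\bf\Lambda}_i}(Y)$ is completely contractive (here one uses $\sum_{|\alpha|\ge1}a_{i,\tilde\alpha}(I\otimes{\bf\Lambda}_{i,\alpha})(I\otimes{\bf\Lambda}_{i,\alpha})^*\le I$), hence norm-contractive and WOT-continuous on bounded sets. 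By Theorem \ref{main} (relation \eqref{VN}) and Theorem \ref{Fourier}, an arbitrary weighted multi-Toeplitz operator $T$ is the SOT-limit, as $r\to1$, of the norm-convergent series $\varphi_T(r{\bf W},r{\bf W}^*)=\sum_{(\boldsymbol\alpha,\boldsymbol\beta)\in\boldsymbol\cJ}r^{|\boldsymbol\alpha|+|\boldsymbol\beta|}A_{(\boldsymbol\alpha,\boldsymbol\beta)}\otimes{\bf W}_{\boldsymbol\alpha}{\bf W}_{\boldsymbol\beta}^*$, and these satisfy $\|\varphi_T(r{\bf W},r{\bf W}^*)\|\le\|T\|$; each of them is in turn a norm-limit of finite linear combinations of the operators $C\otimes{\bf W}_{\boldsymbol\alpha}{\bf W}_{\boldsymbol\beta}^*$ with $(\boldsymbol\alpha,\boldsymbol\beta)\in\boldsymbol\cJ$. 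Hence it suffices to prove \eqref{BH} for $T=C\otimes{\bf W}_{\boldsymbol\alpha}{\bf W}_{\boldsymbol\beta}^*$. Fixing $i$ and using that $C_{\tilde f_i,I\otimes{\bf\Lambda}_i}$, its Cauchy dual, and $\Phi_{\tilde f_i,I\otimes{\bf\Lambda}_i}$ all act as the identity on $\cK$ and on the factors $F^2(H_{n_p})$, $p\ne i$, one cancels the inert factors and is left with a one-variable identity on $F^2(H_{n_i})$ for $W_{i,\alpha_i}W_{i,\beta_i}^*$, where $|\alpha_i|\,|\beta_i|=0$ since $(\boldsymbol\alpha,\boldsymbol\beta)\in\boldsymbol\cJ_{\bf s}$ forces $|\alpha_i|=s_i^+$, $|\beta_i|=s_i^-$.

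Next, dropping the index $i$, I would clear the Cauchy dual. By Proposition \ref{WWW}(ii), $C_{\tilde f,\Lambda}C_{\tilde f,\Lambda}^{\prime*}=C_{\tilde f,\Lambda}'C_{\tilde f,\Lambda}^*=C_{\tilde f,\Lambda}(C_{\tilde f,\Lambda}^*C_{\tilde f,\Lambda})^{-1}C_{\tilde f,\Lambda}^*$ is the orthogonal projection onto $F^2(H_n)\ominus\CC=(I-{\bf P}_\CC)F^2(H_n)$, and $X\mapsto C_{\tilde f,\Lambda}XC_{\tilde f,\Lambda}^*$ is injective on operators supported on $\text{\rm range}\,C_{\tilde f,\Lambda}^*$; as in the proof of Proposition \ref{WWW}(iii), $C_{\tilde f,\Lambda}\,{\bf diag}_N(Y)\,C_{\tilde f,\Lambda}^*=\Phi_{\tilde f,\Lambda}(Y)$, so $C_{\tilde f,\Lambda}\,{\bf diag}_N(\Phi_{\tilde f,\Lambda}^j(Y))\,C_{\tilde f,\Lambda}^*=\Phi_{\tilde f,\Lambda}^{j+1}(Y)$. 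Applying $C_{\tilde f,\Lambda}(\cdot)C_{\tilde f,\Lambda}^*$ to the one-variable identity and using $\sum_{j=0}^{m-1}(-1)^j\binom{m}{j+1}\Phi_{\tilde f,\Lambda}^{j+1}(Y)=Y-(id-\Phi_{\tilde f,\Lambda})^m(Y)$, it becomes equivalent to
\[
(id-\Phi_{\tilde f,\Lambda})^m\big(W_\alpha W_\beta^*\big)
={\bf P}_\CC W_\alpha W_\beta^*+W_\alpha W_\beta^*{\bf P}_\CC-{\bf P}_\CC W_\alpha W_\beta^*{\bf P}_\CC .
\]
Since $|\alpha|\,|\beta|=0$ and $(id-\Phi_{\tilde f,\Lambda})^m(Y)^*=(id-\Phi_{\tilde f,\Lambda})^m(Y^*)$, one may assume $\beta=g_0$; then $W_\mu^*1=0$ for $\mu\ne g_0$ and $W_\alpha 1=(b^{(m)}_\alpha)^{-1/2}e_\alpha$ show that the right-hand side collapses to the rank-one operator $\xi\mapsto(b^{(m)}_\alpha)^{-1/2}\left<\xi,1\right>e_\alpha$ (equal to ${\bf P}_\CC$ when $\alpha=g_0$). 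So everything reduces to the identity
\[
(id-\Phi_{\tilde f,\Lambda})^m\big(W_\alpha\big)\xi=\frac{1}{\sqrt{b^{(m)}_\alpha}}\left<\xi,1\right>e_\alpha,\qquad \xi\in F^2(H_n),\ \alpha\in\FF_n^+ ,
\]
which for $\alpha=g_0$ is the known relation $(id-\Phi_{\tilde f,\Lambda})^m(I)={\bf P}_\CC$.

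Finally I would prove this identity by evaluating on the basis vectors $e_\gamma$. Expanding $\Phi_{\tilde f,\Lambda}^j(W_\alpha)=\sum_{|\nu|\ge j}\big(\sum_{\nu=\mu_1\cdots\mu_j,\ |\mu_\ell|\ge1}a_{\tilde\mu_1}\cdots a_{\tilde\mu_j}\big)\Lambda_\nu W_\alpha\Lambda_\nu^*$ and inserting the explicit formulas for $W_\alpha e_\gamma$, $\Lambda_\nu e_\gamma$, $\Lambda_\nu^*e_\gamma$ recorded earlier (note $\Lambda_\nu^*e_\gamma\ne0$ only when $\tilde\nu$ is a right factor of $\gamma$, and then $\Lambda_\nu W_\alpha\Lambda_\nu^*e_\gamma$ is a scalar multiple of $e_{\alpha\gamma}$, the scalar depending only on the weights $b^{(m)}$), one finds that $(id-\Phi_{\tilde f,\Lambda})^m(W_\alpha)e_\gamma$ equals $\big(b^{(m)}_\gamma b^{(m)}_{\alpha\gamma}\big)^{-1/2}$ times the coefficient of $\gamma$ in the product $\mathbf b^{(m)}(1-A)^m$, computed in the completion of the free-monoid algebra over $\FF_n^+$, where $A:=\sum_{|\rho|\ge1}a_\rho\,\rho$ and $\mathbf b^{(m)}:=\sum_{\delta\in\FF_n^+}b^{(m)}_\delta\,\delta$. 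By the defining recursion \eqref{b-al2}, $\mathbf b^{(m)}=\sum_{j\ge0}\binom{j+m-1}{m-1}A^j=(1-A)^{-m}$ (the inverse exists since $A$ has zero constant term), hence $\mathbf b^{(m)}(1-A)^m=1$, the unit supported at $g_0$; the coefficient therefore vanishes unless $\gamma=g_0$ and equals $1$ when $\gamma=g_0$, which gives $(id-\Phi_{\tilde f,\Lambda})^m(W_\alpha)e_\gamma=\delta_{\gamma,g_0}(b^{(m)}_\alpha)^{-1/2}e_\alpha$, as wanted. The one genuinely delicate point is here: one must keep careful track of the reversals $\mu\mapsto\tilde\mu$ and the concatenations so that $\sum_{\nu=\mu_1\cdots\mu_j}a_{\tilde\mu_1}\cdots a_{\tilde\mu_j}$ is correctly identified with the $\tilde\nu$-coefficient of $A^j$; once this is done, the generating-function identity $\mathbf b^{(m)}=(1-A)^{-m}$ finishes the computation, and the earlier reductions are routine bookkeeping with tensor factors, linearity, and the Cauchy dual.
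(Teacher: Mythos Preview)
Your proof is correct, but it takes a genuinely different route from the paper. Both proofs first reduce to the building blocks $q_{\bf s}({\bf W},{\bf W}^*)$ (or, in your finer version, to a single monomial $C\otimes{\bf W}_{\boldsymbol\alpha}{\bf W}_{\boldsymbol\beta}^*$) and then pass to the limit via Theorem \ref{main}; the difference is how the Brown--Halmos identity is checked on these generators. The paper exploits the commutation relation ${\bf W}_{i,j}{\bf\Lambda}_{i,p}={\bf\Lambda}_{i,p}{\bf W}_{i,j}$ between left and right weighted creation operators: for $s_i\ge0$ this yields $q_{\bf s}C_{\tilde f_i,I\otimes{\bf\Lambda}_i}=C_{\tilde f_i,I\otimes{\bf\Lambda}_i}\,{\bf diag}_{N_i}(q_{\bf s})$ and a companion relation for $\Phi_{\tilde f_i,I\otimes{\bf\Lambda}_i}$, and then a short manipulation using Proposition \ref{WWW}(iii) finishes; the case $s_i<0$ is handled by the adjoint versions. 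You instead conjugate the equation by $C_{\tilde f,\Lambda}(\cdot)C_{\tilde f,\Lambda}^*$ to recast it, after reduction to a single factor, as the operator identity $(id-\Phi_{\tilde f,\Lambda})^m(W_\alpha)=W_\alpha{\bf P}_\CC$, which you then verify by an explicit generating-function computation in the free-monoid algebra, using $\mathbf b^{(m)}=(1-A)^{-m}$.

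What this comparison buys: your argument is fully self-contained and never appeals to the $W$--$\Lambda$ commutation, at the price of a longer coefficient calculation. But it is worth noting that your key identity follows in one line from that commutation: since $W_\alpha\Lambda_\mu=\Lambda_\mu W_\alpha$, one has $\Phi_{\tilde f,\Lambda}^j(W_\alpha)=W_\alpha\Phi_{\tilde f,\Lambda}^j(I)$, hence $(id-\Phi_{\tilde f,\Lambda})^m(W_\alpha)=W_\alpha(id-\Phi_{\tilde f,\Lambda})^m(I)=W_\alpha{\bf P}_\CC$. So your generating-function step is effectively an independent re-derivation of this consequence; the paper's route is shorter and more structural, while yours provides a direct combinatorial verification.
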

\begin{proof}
First, we show that if ${\bf s}=(s_1,\ldots, s_k)\in \ZZ^k$, then
the operator
$$q_{\bf s}({\bf W},{\bf W}^*):=\sum_{(\boldsymbol \alpha, \boldsymbol \beta)\in \boldsymbol\cJ_{\bf s}} A_{(\boldsymbol \alpha, \boldsymbol \beta)}\otimes {\bf W}_{ \boldsymbol\alpha}{\bf W}_{\boldsymbol\beta}^*
$$
satisfies the Brown-Halmos condition.
Fix $i\in \{1,\ldots, k\}$ and assume that   $s_i\geq 0$.
Then
\begin{equation}
\label{diag1}
q_{\bf s}({\bf W},{\bf W}^*)C_{\tilde f_i,I\otimes{\bf \Lambda}_i}=C_{\tilde f_i,I\otimes{\bf \Lambda}_i} {\bf diag}_{N_i}\left(q_{\bf s}({\bf W},{\bf W}^*)\right)
\end{equation}
  and
  \begin{equation}
\label{diag2}
{\bf diag}_{N_i}\left(q_{\bf s}({\bf W},{\bf W}^*)\right){\bf diag}_{N_i}\left({\bf \Psi}_{\tilde f_i,I\otimes {\bf \Lambda}_i}(I) \right)={\bf diag}_{N_i}\left({\bf \Psi}_{\tilde f_i,I\otimes  {\bf \Lambda}_i}(q_{\bf s}({\bf W},{\bf W}^*)) \right),
  \end{equation}
  where
  $$
  {\bf diag}_{N_i}\left({\bf \Psi}_{\tilde f_i,I\otimes {\bf \Lambda}_i}(X) \right):={\bf diag}_{N_i}\left( \sum_{j=0}^{m_i-1} (-1)^j \left(\begin{matrix}  m_i\\j+1
\end{matrix}\right)\Phi^j_{\tilde f_i,I\otimes{\bf \Lambda}_i}(X)\right).
  $$
   Now, using Proposition \ref{WWW}, item (iii), and relations \eqref{diag1}, \eqref{diag2}, we deduce that
\begin{equation*}
\begin{split}
&(C_{\tilde f_i,I\otimes{\bf \Lambda}_i}^*C_{\tilde f_i,I\otimes{\bf \Lambda}_i})^{-1}C_{\tilde f_i,I\otimes{\bf \Lambda}_i}^*
q_{\bf s}({\bf W},{\bf W}^*)C_{\tilde f_i,I\otimes{\bf \Lambda}_i}(C_{\tilde f_i,I\otimes{\bf \Lambda}_i}^*C_{\tilde f_i,I\otimes{\bf \Lambda}_i})^{-1}\\
&\qquad =(C_{\tilde f_i,I\otimes{\bf \Lambda}_i}^*C_{\tilde f_i,I\otimes{\bf \Lambda}_i})^{-1}C_{\tilde f_i,I\otimes{\bf \Lambda}_i}^*
q_{\bf s}({\bf W},{\bf W}^*) C_{\tilde f_i,I\otimes{\bf \Lambda}_i} {\bf diag}_{N_i}\left( \sum_{j=0}^{m_i-1} (-1)^j \left(\begin{matrix}  m_i\\j+1
\end{matrix}\right) \Phi_{\tilde f_i,I\otimes{\bf \Lambda}_i}^j(I)\right)|_{\text{\rm range}\,C_{\tilde f_i,I\otimes{\bf \Lambda}_i}^* }\\
&\qquad = (C_{\tilde f_i,I\otimes{\bf \Lambda}_i}^*C_{\tilde f_i,I\otimes{\bf \Lambda}_i})^{-1}C_{\tilde f_i,I\otimes{\bf \Lambda}_i}^*
C_{\tilde f_i,I\otimes{\bf \Lambda}_i} {\bf diag}_{N_i}\left(q_{\bf s}({\bf W},{\bf W}^*)\right){\bf diag}_{N_i}\left({\bf \Psi}_{\tilde f_i, I\otimes {\bf \Lambda}_i}(I) \right)|_{\text{\rm range}\,C_{\tilde f_i,I\otimes{\bf \Lambda}_i}^* }\\
& \qquad ={\bf P}_{\text{\rm range}\,C_{\tilde f_i,I\otimes{\bf \Lambda}_i}^* }{\bf diag}_{N_i}\left({\bf \Psi}_{\tilde f_i, I\otimes {\bf \Lambda}_i}(q_{\bf s}({\bf W},{\bf W}^*) \right)|_{\text{\rm range}\,C_{\tilde f_i,I\otimes{\bf \Lambda}_i}^*}.
\end{split}
\end{equation*}
If   $s_i< 0$, then
\begin{equation}\label{diag3}
C_{\tilde f_i, I\otimes{\bf \Lambda}_i}^*q_{\bf s}({\bf W},{\bf W}^*)= {\bf diag}_{N_i}(q_{\bf s}({\bf W},{\bf W}^*))C_{\tilde f_i, I\otimes{\bf \Lambda}_i} ^*
\end{equation}
and
\begin{equation}\label{diag4}
{\bf diag}_{N_i}\left({\bf \Psi}_{\tilde f_i, I\otimes{\bf \Lambda}_i}(I) \right){\bf diag}_{N_i}(q_{\bf s}({\bf W},{\bf W}^*))={\bf diag}_{N_i}\left({\bf \Psi}_{\tilde f_i, I\otimes{\bf \Lambda}_i}(q_{\bf s}({\bf W},{\bf W}^*)) \right).
\end{equation}
Using using Proposition \ref{WWW}, item (iii), and relations \eqref{diag3}, \eqref{diag4}, we deduce that
\begin{equation*}
\begin{split}
&(C_{\tilde f_i,I\otimes{\bf \Lambda}_i}^*C_{\tilde f_i,I\otimes{\bf \Lambda}_i})^{-1}C_{\tilde f_i,I\otimes{\bf \Lambda}_i}^*
q_{\bf s}({\bf W},{\bf W}^*)C_{\tilde f_i,I\otimes{\bf \Lambda}_i}(C_{\tilde f_i,I\otimes{\bf \Lambda}_i}^*C_{\tilde f_i,I\otimes{\bf \Lambda}_i})^{-1}\\
&
 \qquad={\bf P}_{\text{\rm range}\,C_{\tilde f_i,I\otimes{\bf \Lambda}_i}^* }{\bf diag}_{N_i}\left( \sum_{j=0}^{m_i-1} (-1)^j \left(\begin{matrix}  m_i\\j+1
\end{matrix}\right) \Phi_{\tilde f_i,I\otimes{\bf \Lambda}_i}(I)\right)C_{\tilde f_i,I\otimes{\bf \Lambda}_i}^*
q_{\bf s}({\bf W},{\bf W}^*)C_{\tilde f_i,I\otimes{\bf \Lambda}_i}(C_{\tilde f_i,I\otimes{\bf \Lambda}_i}^*C_{\tilde f_i,I\otimes{\bf \Lambda}_i})^{-1}\\
&\qquad={\bf P}_{\text{\rm range}\,C_{\tilde f_i,I\otimes{\bf \Lambda}_i}^* }{\bf diag}_{N_i}\left({\bf \Psi}_{\tilde f_i, I\otimes{\bf \Lambda}_i}(I) \right){\bf diag}_{N_i}(q_{\bf s}({\bf W},{\bf W}^*))C_{\tilde f_i,I\otimes{\bf \Lambda}_i}^*C_{\tilde f_i,I\otimes{\bf \Lambda}_i}(C_{\tilde f_i,I\otimes{\bf \Lambda}_i}^*
C_{\tilde f_i,I\otimes{\bf \Lambda}_i})^{-1}\\
&\qquad={\bf P}_{\text{\rm range}\,C_{\tilde f_i,I\otimes{\bf \Lambda}_i}^* }{\bf diag}_{N_i}\left({\bf \Psi}_{\tilde f_i, I\otimes {\bf \Lambda}_i}(q_{\bf s}({\bf W},{\bf W}^*)) \right)|_{\text{\rm range}\,C_{\tilde f_i,I\otimes{\bf \Lambda}_i}^*}.
 \end{split}
\end{equation*}
Due to the first part of the proof, for any  $r\in [0,1)$, $q_{\bf s}(r{\bf W},r{\bf W}^*)=r^{|{\bf s}|}q_{\bf s}({\bf W},{\bf W}^*)$ satisfies the Brown-Halmos condition. According to Theorem \ref{main}, there is a   bounded free  $k$-pluriharmonic  function $F$ on  the radial polydomain ${\bf D}_{{\bf f},rad}^{\bf m}$ with coefficients in $B(\cK)$ such that
    $$T=\text{\rm SOT-}\lim_{r\to 1} F(r{\bf W})$$
and
$F(r{\bf W})=\sum_{{\bf s}\in \ZZ^k} r^{|{\bf s}|}q_{\bf s}({\bf W},{\bf W}^*)$ is convergent in the operator norm topology. It is easy to see that  $F(r{\bf W})$ satisfies the Brown-Halmos condition  \eqref{BH}.
Using the SOT-convergence above, we can prove that, for each $j\in \{0,1,\ldots, m_i-1\}$ and any $\boldsymbol\alpha, \boldsymbol\beta\in {\bf F}_{\bf n}^+$,  $h,\ell\in \cK$,
\begin{equation}
\label{conv-sot}
\left< \Phi^j_{\tilde f_i,I\otimes{\bf \Lambda}_i} (F(r{\bf W})) h\otimes e_{\boldsymbol\alpha}, \ell\otimes e_{\boldsymbol\beta}\right>    \to      \left<\Phi^j_{\tilde f_i,I\otimes{\bf \Lambda}_i }(T) h\otimes e_{\boldsymbol\alpha}, \ell\otimes e_{\boldsymbol\beta}\right> , \text{ as } \ r\to 1.
\end{equation}
Since
$$\Phi_{\tilde f_i,I\otimes{\bf \Lambda}_i}(I)=\sum_{\alpha\in \FF_{n_i}^+, |\alpha|\geq 1}a_{i,\tilde\alpha}\Lambda_{i,\alpha} \Lambda_{i,\alpha}^*\leq I
$$
and, due to Theorem \ref{main},  $\sup_{r\in[0,1)}\|F(r{\bf W})\|=\|T\|$,  we deduce that
$$\sup_{r\in [0,1)}\|\Phi^j_{\tilde f_i,I\otimes{\bf \Lambda}_i} (F(r{\bf W}))\|<\infty.
$$
Using relation \eqref{conv-sot}, we deduce that  $\Phi^j_{\tilde f_i,I\otimes{\bf \Lambda}_i} (F(r{\bf W})\to \Phi^j_{\tilde f_i,I\otimes{\bf \Lambda}_i }(T)$, as $r\to 1$, in the weak operator topology.
Consequently, and using that
\begin{equation*}
C_{\tilde f_i,I\otimes{\bf \Lambda}_i}^{\prime*}F(r{\bf W})C_{\tilde f_i,I\otimes {\bf \Lambda}_i}'= {\bf P}_{{\text{\rm range}\,C_{\tilde f_i,I\otimes{\bf \Lambda}_i}^* }}
{\bf diag}_{N_i}\left( \sum_{j=0}^{m_i-1} (-1)^j \left(\begin{matrix}  m_i\\j+1
\end{matrix}\right)\Phi^j_{\tilde f_i,I\otimes{\bf \Lambda}_i}(F(r{\bf W}))\right)|_{{\text{\rm range}\,C_{\tilde f_i, I\otimes{\bf \Lambda}_i}^* }}
\end{equation*}
for any $i\in \{1,\ldots, k\}$, we deduce that $T$ satisfies the Brown-Halmos condition.
The proof is complete.
 \end{proof}

 In \cite{Po-Toeplitz-poly-hyperball}, we proved that the converse of Theorem \ref{th-BH} is true for poly-hyperballs.  While we believe that the converse is true  for all  the noncommutative polydomains ${\bf D_f^m}$, for now, it remains an open problem.

\bigskip

       %

      \end{document}